\newtheorem{theorem}{Theorem}[section]
\newtheorem{lemma}[theorem]{Lemma}
\newtheorem{prop}[theorem]{Proposition}
\theoremstyle{definition}
\newtheorem{definition}[theorem]{Definition}
\theoremstyle{remark}
\newtheorem{remark}[theorem]{Remark}
\newcommand{\pare}[1]{\left( #1 \right)}
\newcommand{\system}[1]{\left\{ #1 \right.}
\numberwithin{equation}{section}
\date{}
\begin{document}

\title{\bf Gevrey-class-$3$ regularity of the linearised hyperbolic Prandtl system on a strip}

\author{Francesco De Anna$\,^1$, Joshua Kortum$\,^2$,   Stefano Scrobogna$\,^3$}
\affil{\small
    $\,^1$ Institute of Mathematics, University of Würzburg, Germany\\
    email: francesco.deanna@mathematik.uni-wuerzburg.de\\

    \vspace{0.3cm}
    $\,^2$ Institute of Mathematics, University of Würzburg, Germany \\
    email: joshua.kortum@mathematik.uni-wuerzburg.de\\
    
     \vspace{0.3cm}
    $\,^3$ Dipartimento di Matematica e Geoscienze, Università degli Studi di Trieste, Trieste, Italy, \\
    email: stefano.scrobogna@units.it
}

\maketitle

\begin{abstract}
 \noindent  
In the present paper, we address a physically-meaningful extension of the linearised Prandtl equations around a shear flow. Without any structural assumption, it is well-known that the optimal regularity of Prandtl is given by the class Gevrey 2 along the horizontal direction. The goal of this paper is to overcome this barrier, by dealing with the linearisation of the so-called \textit{hyperbolic Prandtl equations} in a strip domain. We prove that the local well-posedness around a general shear flow $U_{\rm sh}\in W^{3, \infty}(0,1)$ holds true, with solutions that are Gevrey class 3 in the horizontal direction.
\end{abstract}

\section{Introduction}

The main concern of this paper is to set up a rigorous well-posedness theory for the following extension of the linearised and reduced Prandtl equations on a thin strip:
\begin{equation}
    \label{LHP}
    \system{
    \begin{alignedat}{4}
    & ( \partial_t +1)\big(
    \partial_t u  + U_{{\rm sh}}  \partial_x u + v \ U'_{{\rm sh}}\big) - \partial_{y}^2 u =0,
    \qquad (t,x,y)\in \,
    &&(0,T) \times \mathbb{T}\times (0,1),\\
    & \partial_x u + \partial_y v =0
    &&(0,T) \times \mathbb{T}\times (0,1),
    \\
    & \left. \pare{u, u_t}\right|_{t=0} =\pare{u_{\rm in}, u_{t,\rm in}}
    &&\hspace{1.425cm}\mathbb{T}\times (0,1),
    \\
    & \left. u\right|_{y=0}=0,\quad v_{|y = 0} = 0
     &&(0,T) \times \mathbb{T}.
    \end{alignedat}
    } 
\end{equation}
In this system, the unknown is the horizontal component $u= u(t,x,y)$ of the velocity field $(u,v)^T: (0,T) \times\mathbb{T}\times (0,1) \to \mathbb{R}^2 $, while the vertical component $v = v(t,x,y)$ is determined by the divergence-free condition $\partial_x u + \partial_y v = 0$ and by the Dirichlet relation $v_{|y = 0} = 0$, which formally imply
\begin{equation*}
    v(t,x,y) = -\partial_x \int_0^y u(t,x,z) dz,\qquad (t,x,y) \in 
    (0,T)\times \mathbb T \times (0,1).
\end{equation*}
The function $U_{\rm sh} = U_{\rm sh}(y)$ depends uniquely upon the vertical variable $y\in (0,1)$ and describes a shear flow $(U_{\rm sh}(y), 0)^T$, around which the original equations have been linearised. 
System \eqref{LHP} arises indeed from a meaningful extension of the classical Prandtl equations, known as \textit{hyperbolic Prandtl equations}\cite{PZ2022}. With our analysis, we aim to show how System~\eqref{LHP} might be particularly desirable, in order to overcome certain analytic barriers that are typical of the classical Prandtl theory. More precisely, System \eqref{LHP} is amenable in terms of solutions that have regularity Gevrey-class 3 along the vertical variable $x\in \mathbb T$, overcoming therefore the well-known barrier of Gevrey-class 2 of the Prandtl theory. Details about this statement and our main result are presented starting from \Cref{sec:preliminaries-main-result}. 
First, we shall provide some background on the origin of this model.

\subsection{The Prandtl equations and the barrier of Gevrey-class 2}\label{sec:the-barrier-of-Gevrey-2}
\label{sec:barrier-Gev2}
 
\noindent
In order to understand the major characteristics of \eqref{LHP}, we shall briefly overview the original model of Prandtl, that was introduced during a 10 minutes presentation of the 1904 Third International Mathematics Congress in Heidelberg \cite{Tollmien1961}. Such short presentation has scientifically impacted many disciplines, so much that nowadays the field of aerodynamics is still shaped by his fundamental idea: in order to describe the inviscid limit of an incompressible fluid in a region close to a solid surface (where dissipative forces are predominant), one would rather consider the velocity of the fluid in terms of rescaled variables, which concentrate the dissipative effects in a thin region close to the boundary. 
This procedure provides a ``split'' in the behaviour of the flow:
\begin{itemize}
    \item within the bulk,  the hydrodynamics is dominated by the incompressible Euler equations with no-penetration boundary conditions,
    \item on a neighbourhood of the boundary, a corrective term (the so-called boundary layer) provided by the Prandtl equations restores the natural no-slip boundary conditions for viscous flows. 
\end{itemize}
One of the simplest forms of the (non-linear) Prandtl equations in two dimensions is given by
\begin{equation}
    \label{Prandtl}
    \system{
    \begin{alignedat}{4}
    &  
    \partial_t u  +  u  \partial_x u + v \partial_y u  - \partial_{y}^2 u =\partial_t u^{E}+ u^E\partial_x u^E ,
    \qquad (t,x,y)\in 
    &&(0,T) \times \mathbb{X}\times (0,+\infty),\\
    & \partial_x u + \partial_y v =0
    &&(0,T) \times \mathbb{X}\times (0,+\infty),
    \\
    & \left. \pare{u,v}\right|_{y=0}=0\quad 
    \lim_{y\to+\infty} u = u^E
     &&(0,T) \times \mathbb{X},\\
    &u_{t|= 0} = u_{\rm in}
    &&\hspace{1.4cm} \mathbb{X}\times (0,+\infty),
    \end{alignedat}
    } 
\end{equation}
where $x\in \mathbb X$ describes the (local) arc-length parametrisation of the solid surface (usually in the mathematical community $\mathbb{X} = \mathbb T$ or $\mathbb X = \mathbb R$), while $u^E= u^E(t,x)$ is determined by the solution of the Euler equation in the bulk of the flow, when approaching the boundary. 

\noindent
The analysis of \eqref{Prandtl} has received from the mathematical community numerous investigations during the past decades. Although the Prandtl equations are classical, their applications are rather narrowed because of the particular unstable nature of the underlying solutions. 
These instabilities are nowadays moderately well understood and relate mainly to separation phenomena (appearance of reversed flow in the boundary layers). 

\noindent 
The first rigorous mathematical study addressing the well-posedness of the Prandtl equations \eqref{Prandtl} was performed in the book of Oleinik and Samokhin \cite{Oleinik1999} in the case of so-called monotonic initial data (namely initial velocity $u_{\rm in}$ in \eqref{Prandtl}, satisfying $\partial_y u_{\rm in} >0$). Roughly speaking, the Olenik's monotonicity prevents the mentioned flow separation, at least locally in time. This allows to recast the velocity field through a meaningful transformation (known as Crocco transformation), providing a solid ground to the local-in-time well posedness of \eqref{Prandtl} within function spaces typical of hydrodynamics, such as Sobolev ones. For more details on the Olenik's monotonicity, we refer the reader to the more recent result \cite{AWXY2015},  in which the authors construct local-in-time solutions via a Nash-Moser argument. See also \cite{MW2015} for a proof performed purely by energy methods. 

\noindent 
For initial data lacking monotonicity, the well-posedness becomes much more involved and one has to consider function spaces that control infinite derivatives of the solutions. This was addressed in the celebrated result \cite{SC1998} of Caflisch and Sammartino,  where the authors dealt with non-monotonic initial data $u_{\rm in}$ that are analytic in the variable $x\in \mathbb X = \mathbb R$. In the framework of a periodic variable $x\in \mathbb X = \mathbb T$, analytic initial data can be easily understood through their Fourier series, under a strong localisation of the frequencies:
\begin{equation*}
    u_{\rm in} (x,y) = \sum_{k\in \mathbb Z} u_{\rm in, k}(y) e^{i k x},
\end{equation*}
where the modes $u_{\rm in, k}(y)$ decays exponentially as $u_{\rm in, k}(y)\sim e^{-a|k|}$, for some radius of analyticity $a>0$. This type of initial data are however extremely regular and with reduced applications to real phenomena. For this reason, an increasing number of works were devoted to relax this framework.

\noindent
The first breakthrough was provided in \cite{GM2015} by G\'erard-Varet and Masmoudi, where the authors showed that the Prandtl system is actually locally well-posed for data that are Gevrey-class $7/4$ in the $x$-variable. Roughly speaking, an initial data $ u_{\rm in}$ is Gevrey-class $m$ along $x\in \mathbb T$, with $m>1$, if the modes $u_{\rm in, k}(y)$ decay exponentially as $u_{\rm in, k}(y)\sim e^{-a|k|^{1/m}}$, for a suitable radius $a>0$. 
Already in \cite{GM2015}, however, the authors remarked that the Gevrey-class $7/4$ was unlikely to be optimal and that further insights from numerics suggested rather a threshold of Gevrey-class $2$ (i.e.~$u_{\rm in, k}(y)\sim e^{-a|k|^{1/2}}$).

\noindent 
Eventually, this remark was mathematically formalised and a first result in this direction was attained in \cite{LY2020}, assuming that the velocity $(u,v)^T$ in \eqref{Prandtl}  is a small perturbation of a suitable shear flow, which satisfies a non-degenerate condition (for details cf.~Assumption~1.1 in \cite{LY2020}). 

\noindent
The breakthrough of Gevrey 2 was however achieved by Gerard-Varet and Dietert in \cite{MR3925144}, where the authors developed a robust local well-posedness theory without any structural assumption on the flow (such as monotonicity or critical points). Their result was based on a meaningful change of state variable, from which the present work takes substantial inspiration. 

\noindent 
From the work of Gerard-Varet and Dietert followed a variety of questions, in particular to determine whether the Gevrey-class 2 was optimal for the well-posedness of System~\eqref{Prandtl} or if further insights would have led to weaker regularities. 
Surprisingly, Gerard-Varet and Dormy overturned any possibility of improvement, providing indeed a negative answer to this open problem. In their seminal result \cite{GD2010}, the authors showed that already at the level of the linearised equations around a shear flow $(U_{\rm sh}(y), 0)$, namely replacing the first equation of \eqref{Prandtl} with 
\begin{equation} \label{LP}
    \partial_t u  + U_{{\rm sh}}  \partial_x u + v \ U'_{{\rm sh}} - \partial_{y}^2 u =0,
    \qquad (t,x,y)\in \,
    (0,T) \times \mathbb{T}\times (0,+\infty),
\end{equation}
the linear propagator of regularity is unbounded in Gevrey-class higher than 2. Roughly speaking, the authors showed the existence of solutions, whose modes in the frequencies $k\in \mathbb Z$ experience an exponential growth with rate $|k|^{1/2}$. In general, this growth could be counteracted only by Gevrey-$2$ initial data, precluding any room for improvement.  In other words, this was the first encounter with the barrier of Gevrey 2: the linearised Prandtl equation is ill-posed within any  larger setting. 

\noindent 
We refer the reader to the works \cite{DM2019,GLSS2009, GGN2016,GGN2015}, as well, which concern further instabilities of the Prandtl equations.  

\subsection{The hydrostatic approximation}
\label{sec:hydrostatic-approximation}
When the vertical variable $y$ is bounded, for instance with $y\in (0,1)$ (as in our System \eqref{LHP}), a different type of equations has adequately found a mathematical relevance, namely the so-called \textit{hydrostatic approximation of Navier-Stokes/Prandtl}. In two dimensions, these equations are a reminiscent of Prandtl and take the form 
\begin{equation}
    \label{hydrostatic-approximation}
    \system{
    \begin{alignedat}{4}
    &  
    \partial_t u  +  u  \partial_x u + v \partial_y u  - \partial_{y}^2 u +\partial_x p=0,
    \qquad (t,x,y)\in 
    &&(0,T) \times \mathbb{X}\times (0,1),\\
     &  
    \partial_y p =0,
    &&(0,T) \times \mathbb{X}\times (0,1),\\
    & \partial_x u + \partial_y v =0
    &&(0,T) \times \mathbb{X}\times (0,1),
    \\
    & \left. \pare{u,v}\right|_{y=0,1}=0
     &&(0,T) \times \mathbb{X},\\
    &u_{t|= 0} = u_{\rm in}
    &&\hspace{1.4cm} \mathbb{X}\times (0,1).
    \end{alignedat}
    } 
\end{equation}
This model is significant in several phenomena of atmospheric science and can be derived from the so-called primitive equations. Beside the vertical domain, System \eqref{hydrostatic-approximation} inherently differs from \eqref{Prandtl} in the boundary conditions of $v$. In \eqref{hydrostatic-approximation} $v$ is null in both $y= 0,1$, whereas in \eqref{Prandtl} $v$ has homogeneous condition only in $y = 0$ (without any assumption for $y\to +\infty$). Because of this, the pressure $p$ in the hydrostatic approximation \eqref{hydrostatic-approximation} is non-trivial and can be interpreted as a Lagrangian multiplier associated to the constraint $v_{|y= 1}= 0$. We refer the reader to the works \cite{Brenier1999,Brenier2003,GMV2020,Grenier1999,MW2012,MW2015,Wong2015,PZZ2020} and as well the interesting result in \cite{Renardy2009} in which the author proves that, contrarily to what happens for the Prandtl equations, the presence of an inflexion point may trigger high-frequencies instabilities in the linearization of \cref{hydrostatic-approximation} around a shear flow, i.e. \cref{hydrostatic-approximation} in which the first equation is substituted by \cref{LP}. We want to highlight that, to the best of our knowledge, the best regularity result for \cref{hydrostatic-approximation} is provided in \cite{GMV2020,WWZ2021} for $9/8$--Gevrey data {\it under an additional convexity assumption}. Hence the optimal stability vs. instability question is still an open question for the system \eqref{hydrostatic-approximation}, contrarily to what is known for the Prandtl system \eqref{Prandtl}.

\begin{remark}
This paper addresses the well-posedness of \eqref{LHP} within $y\in (0,1)$, nevertheless our intent is to provide insights about an extension of the Prandtl equations \eqref{Prandtl} (for which we know that the barrier is Gevrey 2) rather than the hydrostatic ones in \eqref{hydrostatic-approximation}. 
Dealing with the pressure and homogeneous Dirichlet conditions on $v$ is beyond our interest (certainly, with the pressure, the problem would be much more involved). We address a bounded vertical domain $y\in (0,1)$ uniquely for the sake of a clear presentation of our analysis. To the best of our knowledge, our work is indeed the first to overcome the barrier of Gevrey 2 for a meaningful extension of Prandtl. We infer that a similar result can be achieved in the classical domain $(t,x,y)\in (0,T)\times \mathbb T \times (0, \infty)$, making use of a related ansatz on function spaces with weighted norms in the vertical direction.
\end{remark}

\subsection{The Cattaneo's law on the hydrostatic approximation}\label{sec:cattaneo}
Besides the barrier of Gevrey~2, a more physical drawback of Systems \eqref{Prandtl} and \eqref{hydrostatic-approximation} can be found at the level of the Navier-Stokes equations (from which \eqref{Prandtl} and \eqref{hydrostatic-approximation} are indeed asymptotically derived), because of the so-called {\it infinite propagation speed} of the velocity field (any local variation of the velocity field perturbs immediately the flow in all the domain). To avoid this scenario (which may be occasionally unsatisfactory, especially in the hydrodynamics of fluids at large scale), a suitable hyperbolic extension of Navier-Stokes has found growth in popularity in the mathematical community (cf.~\cite{MR3942552, BNP2004, CHR2022, PR2007, PZ2022, MR3085225, RackeSaal2012}). At a first glance, this extension seems to introduce obstacles, for instance it enlarges the hydrostatic equations \eqref{hydrostatic-approximation} into
\begin{equation}
\label{eq:Prandtl_hyperbolic}
    \system{
    \begin{alignedat}{4}
    &  
    \big( 
        \tau \partial_t+1
    \big)
    \big(
    \partial_t u  +  u  \partial_x u 
    + v \partial_y u  
    \big)
    - \partial_{y}^2 u +\partial_x p =
    0,
    \quad  
    &&(0,T) \times \mathbb{X}\times (0,1),\\
     & \partial_y p =0
    &&(0,T) \times \mathbb{X}\times (0,1),
    \\
    & \partial_x u + \partial_y v =0
    &&(0,T) \times \mathbb{X}\times (0,1),
    \\
     & \left. \pare{u,v}\right|_{y=0,1}=0
     &&(0,T) \times \mathbb{X},\\
    &\left. \pare{u, u_t}\right|_{t=0} =\pare{u_{\rm in}, u_{t,\rm in}}
    &&\hspace{1.4cm} \mathbb{X}\times (0,1),
    \end{alignedat}
    },
\end{equation}
where $\tau>0$ is a meaningful parameter, konwn as relaxation time. Once more, the pressure $p$ in \eqref{eq:Prandtl_hyperbolic} is uniquely due to $v_{|y= 1} = 0$ and would vanish when relaxing this constraint (as in our model \eqref{LHP}).

\noindent
System \eqref{eq:Prandtl_hyperbolic} arises (at least formally) from the inviscid limit of the Navier-Stokes equations, whose Cauchy stress tensor is ``delayed'' through a first-order Taylor expansion:
\begin{equation*}
    \mathbb{S}(t+ \tau, \cdot) \approx  \mathbb{S}(t, \cdot)+\tau \partial_t  \mathbb{S}(t, \cdot) =\nu \frac{\nabla u(t,\cdot)+\nabla u(t,\cdot)^T}{2}
\end{equation*}
(we refer to \cite{BNP2004} for more details). This relation was introduced in fluid-dynamics by Carrassi and Morro~\cite{CARRASSIMORRO},  inspired by the celebrated work of Cattaneo \cite{Cattaneo1949, Cattaneo1958} on heat diffusion.


\noindent 
Despite its relevance, the well-posedness theory of System \eqref{eq:Prandtl_hyperbolic} is unfortunately much less understood. In \cite{PZ2022}, the authors considered $\tau = 1$ and neglected the term $\partial_t(u \partial_x u + v\partial_y u)$ in the first equation. By exploiting a similar technique as the one used in \cite{PZ2021}, they showed that small initial data in Gevrey 2 generate global-in-time solutions. Moreover, in this framework, they justified the asymptotic limit of the Navier-Stokes under Cattaneo's law towards the solutions of \eqref{eq:Prandtl_hyperbolic}, when the viscosity is vanishing. 

\smallskip
\noindent
The main goal of this paper is to show that one can potentially overcome the barrier of Gevrey 2, when dealing with the relevant extensions \eqref{eq:Prandtl_hyperbolic} of the classical Prandtl equation. We indicate accurately this principle on the linearised equation \eqref{LHP} of System \eqref{eq:Prandtl_hyperbolic} around a shear flow $(U_{\rm sh}(y), 0)$, when the constraint $v_{|y=1}= 0$ is relaxed (hence no pressure is involded, as for Prandtl). 
We establish that this model is indeed well-posed locally-in-time, when the initial data are Gevrey-class 3 in $x$ (thus less regular than Gevrey 2) and Sobolev in $y$. We provide also some remarks about the non-linear system in Section~\ref{sec:remarks-on-the-nonlinear-system}.

\subsection{Preliminaries and statement of the main result}\label{sec:preliminaries-main-result}

\noindent 
To formalise our statement, we shall briefly recall the formal definition of Gevrey functions, that we will use throughout our analysis.
\begin{definition}\label{def:Gevrey-regularity}
Let $\sigma>0$ and $m\geq 1$. We define the Banach space $\mathcal{G}^m_{\sigma, x}L^2_y=\mathcal{G}^m_{\sigma}(\mathbb T, L^2(0,1))$ (resp.~$\mathcal{G}^m_{\sigma, x}H^1_{0,y}= \mathcal{G}^m_{\sigma}(\mathbb T, H^1_0(0,1))$) as all integrable functions $f\in L^1(\mathbb T\times (0,1))$ satisfying:
\begin{itemize}
    \item Each coefficient $f_k:(0,1)\to \mathbb R$ of the Fourier transform in the $x$-variable
\begin{equation*}
    f_k(y) = \frac{1}{2\pi}\int_\mathbb{T}f(x,y)e^{-ikx}dx,\qquad y\in (0,1), 
\end{equation*}
belongs to $L^2(0,1)$ (resp.~$H^1_0(0,1)$). 

\item The sequences of norms $(\| f_k \|_{L^2})_{k\in \mathbb Z}$ (resp.~$(\| \partial_y f_k \|_{L^2})_{k\in \mathbb Z}$) decays exponentially as $e^{-\sigma|k|^{1/m}}$ at high frequencies. 
\end{itemize}
More precisely, $f\in L^1(\mathbb T\times (0,1))$ belongs to $\mathcal{G}^m_{\sigma, x}L^2_y$ (resp.~$\mathcal{G}^m_{\sigma, x}H^1_{0,y}$), if the following norm is indeed finite:
\begin{equation}\label{norm:Gevrey-Sobolev}
    \| f \|_{\mathcal{G}^m_{\sigma, x}L^2_y}
    :=
    \Big\| e^{\sigma |k|^{\frac{1}{m}}}\| f_k\|_{L^2(0,1)}\Big\|_{\ell^\infty(\mathbb Z)}
    = 
    \sup_{k \in \mathbb Z}\bigg\{ e^{\sigma |k|^{\frac{1}{m}}}\bigg(\int_0^1 |f_k(y)|^2 dy\bigg)^\frac{1}{2} \bigg\}
    <+\infty,
\end{equation}
(resp.~$\| f \|_{\mathcal{G}^m_{\sigma, x}H^1_{0,y}} := \| \partial_y f \|_{\mathcal{G}^m_{\sigma, x}L^2_y}<+\infty$). 
\end{definition}
\noindent
Function spaces with Gevrey regularity are rather standard, especially in the mathematical treatment of the Prandtl equations. Indeed, by strongly localising the frequencies, one copes with the major instabilities of the underlying solutions. When $f= f(x)$ depends uniquely upon $x\in \mathbb T$, however, the definition of the Gevrey norm in \eqref{norm:Gevrey-Sobolev} may vary in terms of the preferred analytical tools. Among the most relevant norms, we mention for instance 
\begin{equation*}
   \bigg( 
   \sum_{k \in \mathbb Z} 
    e^{2\sigma |k|^{\frac{1}{m}}}|f_k|^2
   \bigg)^\frac{1}{2},
   \quad
   \sup_{n \in \mathbb N} 
   \bigg\{
   \frac{\sigma^{n}}{(n!)^{m}}
    \|\partial_x^n f\|_{L^\infty(\mathbb T)}
   \bigg\},
\end{equation*}
which are somehow equivalent to \eqref{norm:Gevrey-Sobolev}, for positive radii close to $\sigma$.
\begin{definition}
    Let $T$ denote a lifespan in $(0, +\infty]$ and $\eta:(0,T)\to (0, +\infty)$ be a positive continuous non-increasing function, representing the time-evolution of the Gevrey radius of regularity. We say that a function $f$ belongs to $L^p(0, T;\mathcal{G}^m_{\eta(t), x}L^2_{y}) $ (resp.~$L^p(0, T;\mathcal{G}^m_{\eta(t), x}H^1_{0,y})$), for a fixed  $1\leq p\leq \infty$, if 
    \begin{itemize}
        \item $f$ belongs to $L^p(0,T; L^1(\mathbb{T}\times (0,1)))$, 
        \item $f(t)$ belongs to $\mathcal{G}^m_{\eta(t), x}L^2_{y}$ (resp.~$\mathcal{G}^m_{\eta(t), x}H^1_{0,y}$), for almost any $t\in (0,T)$,
        \item the function $t\in (0,T)\to \|f(t) \|_{\mathcal{G}^m_{\eta(t), x}L^2_{y}}$ (resp.~$t\in (0,T)\to \|f(t) \|_{\mathcal{G}^m_{\eta(t), x}H^1_{0,y}}$) belongs to $L^p(0,T)$.
    \end{itemize}
\end{definition}

\noindent 
The function space being set up, the main goal of this paper is to establish the local-in-time existence of solutions for the linearised equation \eqref{LHP}, whose initial data $u_{{\rm in}}$ and  $u_{t,{\rm in}}$ are indeed Gevrey-class 3 in the horizontal variable, as described by \Cref{def:Gevrey-regularity}. 
\begin{theorem}\label{main-thm}
    Assume that the shear flow $y\in (0,1)\mapsto U_{{\rm sh}}(y)$ is in $W^{3, \infty}(0,1)$, while the initial data $u_{{\rm in}}, u_{t,{\rm in}}:\mathbb T \times (0,1) \to \mathbb R$ are in $\mathcal{G}^3_{\sigma, x}H^1_{0,y} $ and $\mathcal{G}^3_{\sigma, x}L^2_{y}$, respectively, for a positive $\sigma>0$. 
    Denoting by $T_\sigma>0$ the lifespan 
    \begin{equation}\label{def:Tsigma}
       T_\sigma :=
       \sup 
       \bigg\{t>0\quad\text{such that}\quad 
       \frac{\sigma}{8}
       -
       2^\frac{5}{6}
       \Big(
       \| U_{{\rm sh}}''' \|_{L^\infty}
       +
       2\| U_{{\rm sh}}'' \|_{L^\infty}
       \Big)^\frac{1}{3}e^\frac{t}{3}t
       > 0
       \bigg\}\in (0, +\infty]
   \end{equation}
   and by $\beta,\, \gamma : [0, T_\sigma)\to \mathbb (0, +\infty)$ the following time-dependent radii of Gevrey-class regularity 
   \begin{equation}\label{radii-of-Gevrey-beta-gamma}
   \begin{aligned}
        \beta(t) := 
        \frac{\sigma}{4}
        -
        2^\frac{5}{6}
        \Big(
        \| U_{{\rm sh}}''' \|_{L^\infty}
        +
        2\| U_{{\rm sh}}'' \|_{L^\infty}
        \Big)^\frac{1}{3}e^\frac{t}{3}t>0,\qquad \gamma(t) := \beta(t)-\frac{\sigma}{8}>0,
    \end{aligned}
   \end{equation}
   then the linearised system \eqref{LHP} admits a unique weak solution $u:[0,T_\sigma)\times \mathbb T \times (0,1)\to \mathbb R$ in the function space
   \begin{equation}\label{function-space-for-u-main-thm}
       u \in 
       L^\infty(0,T_\sigma;\mathcal{G}^3_{\beta(t), x}H^1_{0,y})
       \quad
       \text{with}
       \quad
       \partial_t u \in 
       L^\infty(0,T_\sigma;\mathcal{G}^3_{\gamma(t), x}L^2_{y}).
   \end{equation}
    Furthermore, the following estimate holds true at any time $t\in [0, T_\sigma)$:
    \begin{equation}\label{est:main-thm-inequality-Gevrey}
        \| u(t)             \|_{\mathcal{G}^3_{\beta(t), x}H^1_{0,y}} + 
        \| \partial_t u(t)  \|_{\mathcal{G}^3_{\gamma(t), x}L^2_{y}}
        \leq D_\sigma(U_{\rm sh})
        (1+t)^5e^t
        \Big(
            \| u_{\rm in}   \|_{\mathcal{G}^3_{\sigma, x}H^1_{0,y}} + 
            \| u_{t,\rm in} \|_{\mathcal{G}^3_{\sigma, x}L^2_{y}}
        \Big),
    \end{equation}
    for a suitable constant $D_\sigma(U_{\rm sh})>0$, which depends uniquely upon $\sigma>0$ and the $W^{3, \infty}$-norm of $U_{\rm sh}$.
\end{theorem}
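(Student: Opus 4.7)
The plan is to perform a frequency-by-frequency analysis in the horizontal variable $x \in \mathbb{T}$, combined with a carefully chosen good unknown and weighted energy estimates with time-dependent Gevrey weights, refining the strategy of \cite{MR3925144}. First, I would expand $u(t,x,y) = \sum_{k \in \mathbb{Z}} u_k(t,y)e^{ikx}$, so that $v_k(t,y) = -ik\int_0^y u_k(t,z)\,dz$, and each Fourier mode satisfies
\begin{equation*}
(\partial_t+1)\bigl(\partial_t u_k + ik U_{\rm sh} u_k + v_k U'_{\rm sh}\bigr) - \partial_y^2 u_k = 0,\qquad u_k|_{y=0} = 0.
\end{equation*}
Recasting this as a first-order-in-$t$ system in $(u_k, \partial_t u_k)$, I would look for a \emph{good unknown} of the form $\tilde{u}_k = u_k + \mathcal{C}_k[\partial_t u_k, u_k]$, where the corrector $\mathcal{C}_k$ involves $U'_{\rm sh}$ (and its primitives) and is engineered to neutralise the destabilising transport contribution $v_k U'_{\rm sh}$, leaving only residual terms controlled by $\|U''_{\rm sh}\|_{L^\infty}$ and $\|U'''_{\rm sh}\|_{L^\infty}$.

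Next, for each $k \in \mathbb{Z}$, I would design a quadratic energy functional
\begin{equation*}
E_k(t) = \|\partial_y u_k(t)\|_{L^2_y}^2 + \|\partial_t u_k(t)\|_{L^2_y}^2 + \mu_k \|u_k(t)\|_{L^2_y}^2 + (\text{cross terms in }\tilde{u}_k),
\end{equation*}
with the weight $\mu_k \sim |k|^{2/3}$ encoding the balance between the hyperbolic scale (of order $1$, coming from the $(\partial_t+1)$ factor) and the shear scale (of order $k U_{\rm sh}$). This particular exponent $|k|^{2/3}$ is what separates the hyperbolic from the parabolic Prandtl analysis and underlies the Gevrey-$3$ regularity: a formal WKB computation around critical points of $U_{\rm sh}$ for System~\eqref{LHP} yields unstable modes with dispersion $\lambda \sim |k|^{1/3}$, in place of the $|k|^{1/2}$ growth of the parabolic case. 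After integration by parts, exploiting the boundary conditions on $u_k$ and the cancellation produced by $\mathcal{C}_k$, the time derivative of $E_k$ should satisfy
\begin{equation*}
\tfrac{d}{dt} E_k(t) \leq C\, E_k(t) + C\bigl(\|U'''_{\rm sh}\|_{L^\infty} + 2\|U''_{\rm sh}\|_{L^\infty}\bigr)^{1/3}|k|^{1/3} E_k(t),
\end{equation*}
with constants independent of $k$; the nonlocal-in-$y$ contributions from $v_k$ are either absorbed into the energy via the good unknown or paid for by the $|k|^{1/3}$ factor.

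To glue these mode-by-mode estimates, I would multiply by $e^{2\beta(t)|k|^{1/3}}$ and use that $\beta$ in \eqref{radii-of-Gevrey-beta-gamma} satisfies $\beta'(t) = -2^{5/6}\bigl(\|U'''_{\rm sh}\|_{L^\infty} + 2\|U''_{\rm sh}\|_{L^\infty}\bigr)^{1/3} e^{t/3}(1+t/3)$, so that the negative term $2\beta'(t)|k|^{1/3} e^{2\beta(t)|k|^{1/3}} E_k(t)$ produced by differentiating the weight exactly absorbs the $|k|^{1/3}$ loss in the inequality above — the constants $2^{5/6}$ and $e^{t/3}$ appearing in \eqref{def:Tsigma}--\eqref{radii-of-Gevrey-beta-gamma} are precisely tuned for this cancellation. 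What remains is a Grönwall inequality of the form $\tfrac{d}{dt}\bigl(e^{2\beta(t)|k|^{1/3}} E_k(t)\bigr) \leq C\, e^{2\beta(t)|k|^{1/3}} E_k(t)$, uniformly in $k$, whose integration produces the exponential factor $e^t$ in \eqref{est:main-thm-inequality-Gevrey}; the polynomial loss $(1+t)^5$ and the $\sigma/8$ gap between $\beta$ and $\gamma$ then absorb the algebraic losses incurred when converting $H^1_{0,y}$-control of $u$ into $L^2_y$-control of $\partial_t u$. Existence follows by a standard regularisation scheme (Fourier truncation in $x$, or a Galerkin approximation), deriving the same estimates uniformly on approximate solutions and passing to the limit by weak-$\ast$ compactness; uniqueness is immediate from the linearity of \eqref{LHP} and the same energy bound applied to the difference of two solutions.

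The main obstacle is the second step: constructing the good unknown $\mathcal{C}_k$ together with the coercive energy $E_k$ so that the differential inequality carries only the factor $|k|^{1/3}$ rather than $|k|^{1/2}$. Morally the extra room comes from the relaxation $(\partial_t+1)$, which upgrades the critical-layer dispersion from $|k|^{1/2}$ to $|k|^{1/3}$, but making this rigorous requires a delicate management of the commutators between $\mathcal{C}_k$ and the wave-type operator $\partial_t^2 + \partial_t - \partial_y^2$, together with a careful splitting of the $y$-dependence near the critical points of $U_{\rm sh}$ (where $U'_{\rm sh}$ vanishes and the cancellation is most subtle). This is where the regularity $U_{\rm sh} \in W^{3,\infty}$ — precisely the hypothesis made in the statement and the quantity appearing in the constants of the bound \eqref{est:main-thm-inequality-Gevrey} — will play a decisive role.
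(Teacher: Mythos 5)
Your proposal correctly identifies several ingredients shared with the paper: the Fourier decomposition in $x$, the role of a corrected unknown to neutralise the destabilising $v_k U'_{\rm sh}$ contribution, the appearance of $\|U''_{\rm sh}\|_{L^\infty}$ and $\|U'''_{\rm sh}\|_{L^\infty}$ in the residuals, and the tuning of $\beta'(t)$ to absorb a $|k|^{1/3}$ loss. However, there is a genuine gap at the heart of the argument, one that you yourself flag as the ``main obstacle'': you posit a pointwise-in-time differential inequality of the type
\[
\tfrac{d}{dt}E_k(t)\leq C\,E_k(t)+C\bigl(\|U'''_{\rm sh}\|_{L^\infty}+2\|U''_{\rm sh}\|_{L^\infty}\bigr)^{1/3}|k|^{1/3}E_k(t),
\]
and you offer no mechanism that would make the loss $|k|^{1/3}$ rather than the expected $|k|$. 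Nothing in the problem delivers such a coefficient directly: when one tests the equation of the corrected unknown against its own time derivative, the commutator terms coming from $vU'_{\rm sh}$ and $\partial_t v U'_{\rm sh}$ still carry a full factor $|k|$, and no choice of good unknown $\mathcal{C}_k$ or of weight $\mu_k\sim|k|^{2/3}$ in the energy converts that $|k|$ into $|k|^{1/3}$ at the level of a first-order Gr\"onwall inequality.

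What actually makes Gevrey~3 reachable in the paper is a mechanism you are missing entirely. After introducing the stream function $\Phi_k$ ($u_k=\partial_y\Phi_k$) and the auxiliary unknown $\psi_k$ defined by $((\partial_t+1)(\partial_t+ikU_{\rm sh})-\partial_y^2)\psi_k=\Phi_k$ with zero Cauchy data, the equation closes into a fourth-order-in-time identity
\[
\bigl((\partial_t+1)(\partial_t+ikU_{\rm sh})-\partial_y^2\bigr)^{2}\partial_y\psi_k = ik\,[U'_{\rm sh},\partial_y^2]\bigl((\partial_t+1)\psi_k\bigr).
\]
Pairing this with a backward dual test function $\omega_{\tau,k}$ (Lemma~2.4 of the paper) yields not a differential inequality but an integral one with a cubic kernel,
\[
\|\psi_k(t)\|\leq g_k(t)+C(t)\,|k|\int_0^t (t-s)^2\,\|\psi_k(s)\|\,ds,
\]
and the conversion from a coefficient $|k|$ with kernel $(t-s)^2$ into exponential growth at rate $|k|^{1/3}$ is then accomplished by a dedicated, nonstandard weighted Gr\"onwall lemma (Lemma~2.1). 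The exponent $1/3$ thus emerges from the triple time integration encoded in the $(t-s)^2$ kernel --- solving $\omega'''\leq g+\lambda^3\omega$ forces $\lambda^3\sim|k|$, hence $\lambda\sim|k|^{1/3}$ --- and not from any direct $|k|^{1/3}$ coefficient in a first-order energy inequality. Without the passage to the squared operator and the weighted Gr\"onwall lemma, the approach you sketch remains stuck at the $|k|$ loss and would deliver only analytic (or, with the usual estimate on $|\omega'|$ rather than $|\omega'''|$, Gevrey~2) regularity.
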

\noindent 
Before presenting the major novelties and implications of this result, some remarks on the statement are here in order. 

\noindent 
The solutions of \Cref{main-thm} are weak because of the regularity of the initial data along the vertical direction $y\in (0,1)$. This regularity comes from the underlying a-priori energy of the System \eqref{LHP}
\begin{equation*}
    \frac{1}{2}
    \frac{d}{dt}
    \Big[
        \| (\partial_t +1) u \|_{L^2}^2+   \| \partial_y u \|_{L^2}^2 
    \Big]
    + 
    \| \partial_y u \|_{L^2}^2 
    = 0.
\end{equation*}
Although the solution $u$ and its derivative $\partial_t u$ are in $L^\infty$ with respect to the time variable, we write the estimate \eqref{est:main-thm-inequality-Gevrey} at any $t\in [0,T_\sigma)$ (instead of ``for a.e.~$t\in (0,T_\sigma)$''). This is due to the fact that both $u$ and $\partial_t u$ admit a continuous representative in the following space: for any finite $T\in (0, T_\sigma)$ and for any fixed radius $\mu \in (0, \beta(T))$ (resp.~$\mu \in (0, \gamma(T))$), $t\in [0,T]\to u(t)$ belongs to $\mathcal{C}([0, T],\mathcal{G}^3_{\mu, x}H^1_{0,y}) $ (resp.~$\partial_t u$ belongs to $\mathcal{C}([0, T],\mathcal{G}^3_{\mu, x}H^1_{0,y}) $). Writing directly the expression $u\in \mathcal{C}([0, T_\sigma),\mathcal{G}^3_{\beta(t), x}H^1_{0,y}) $ in 
\eqref{function-space-for-u-main-thm} would be somehow incomplete without a proper clarification, since the norm of $\mathcal{G}^3_{\beta(t), x}H^1_{0,y}$ does progress in time. Certainly one may define this continuity in terms of topology, however this would just complicate the readability of the paper. We maintain therefore the function space of \eqref{function-space-for-u-main-thm} and the estimate \eqref{est:main-thm-inequality-Gevrey} at any time $t\in [0, T_\sigma)$.

\noindent
The solutions provided by \Cref{main-thm} are (in general) only local-in-time, although system \eqref{LHP} is linear in its state variables. 
Indeed, as for the classical Prandtl equations, the high regularity of the initial data $u_{\rm in}$, $u_{t,\rm in}$  is persistently eroded by the term $v U_{\rm sh}'$ in \eqref{LHP} (and in our extension of Prandtl with Cattaneo also by  $\partial_t v U_{\rm sh}' $), together with the viscous operator $-\partial_y^2 u$ in $y\in (0,1)$ (cf.~\cite{GD2010}). This aspect is here reflected by the decaying of the Gevrey radii $\beta$ and $\gamma$ in \eqref{radii-of-Gevrey-beta-gamma}, as time $t>0$ approaches the lifespan $T_\sigma$. 

\noindent 
The radii $\beta(0)$ and $\gamma(0)$ in \eqref{radii-of-Gevrey-beta-gamma} at initial time $t= 0$
correspond to $\sigma/4$ and $\sigma/8$, respectively. One would rather expect them to coincide with the radius  $\sigma>0$ of the initial data. 
This gap is merely an artifact of our analysis, since we also aim to determine an explicit (and readable) form of the constant $D_s(U_{\rm sh})>0$ in \eqref{est:main-thm-inequality-Gevrey}. To this end, we make use of a given amount of the exponential decay in $e^{-\sigma|k|^{1/3}}$ (for instance the missing $e^{-3\sigma|k|^{1/3}/4}$ between $e^{-\gamma(0)|k|^{1/3}}$ and $e^{-\sigma|k|^{1/3}}$), in order to absorb the contribution of certain terms, which arise from $v U_{\rm sh}'$ and $\partial_t v U_{\rm sh}' $ (cf.~for instance \eqref{ineq:k-est-with-exp}). 
Accordingly, we may explicitely set $D_\sigma(U_{\rm sh})>0$ in  \eqref{est:main-thm-inequality-Gevrey} as 
\begin{equation}\label{def:Dsigma}
 D_\sigma(U_{\rm sh}):=10^{4}\max\{1, 12/\sigma\}^{15}(1+\|U_{\rm sh}\|_{L^\infty}+\|U_{\rm sh}'\|_{L^\infty}+\|U_{\rm sh}''\|_{L^\infty}+\|U_{\rm sh}'''\|_{L^\infty})^3 .
\end{equation}
This arrangement is certainly far from being sharp. We
may for instance build our Gevrey-class-3 solution with radii of regularity 
\begin{equation*}
    \beta_\varepsilon(t)
    =
    \gamma_\varepsilon(t) := 
    \sigma
    -\varepsilon
    -
    2^\frac{5}{6}
    \Big(
    \| U_{{\rm sh}}''' \|_{L^\infty}
    +
    2\| U_{{\rm sh}}'' \|_{L^\infty}
    \Big)^\frac{1}{3}e^\frac{t}{3}t,
\end{equation*}
for any small $\varepsilon>0$. This definition would nevertheless complicate the constant  $D_\sigma(U_{\rm sh})$ (behaving now also like $1/\varepsilon$). For the sake of simple presentation, we do not pursue this direction and we simply remark that as long as $u_{\rm in},u_{t,\rm in}$ are in $\mathcal{G}^m_{\sigma, x}L^2_y$ and $\mathcal{G}^m_{\sigma, x}H^1_{0,y}$, respectively, then they are also in $\mathcal{G}^m_{\sigma/4, x}L^2_y$ and $\mathcal{G}^m_{\sigma/8, x}H^1_{0,y}$.

\subsection{Novelty and implications}\label{sec:novelty-and-implication}
Let us  highlight the novelties and consequences of Theorem \ref{main-thm} and the discussion in this work. 
Indeed, the improved Gevrey-3 well-posedness of Theorem \ref{main-thm} of System \eqref{LHP} is far from being trivial. 
Standard considerations of the (linearized as well as non-linear) hyperbolic Prandtl system yield to a well-posedness within Gevrey-class 2, at best 
(see e.g.~\cite{LiXu2021}). We show that a suitable cancellation mechanism is inherent to \eqref{LHP}, which is similar to the one presented in \cite{MR3925144}. Contrary to the classical Prandtl equations, however, the hyperbolic behaviour of \eqref{LHP} unlocks more refined estimates, that endow the mentioned improvements of Gevrey-class 3. 

\smallskip
\noindent
In  Section 2 of  \cite{MR3925144}, Dietert and Gerard-Varet provided a rather clear intuition on why the  well-posedness result of Gevrey 2 holds true for the linearised Prandtl equations (later on, their result further address the non-linear system). 
In order to successfully simplify the comprehension of their idea, they made use of calculations involving the Laplace transform on the time variable. 
Then, potentially, some algebraic calculations relating the Laplace variable (in time) with the Fourier variable (in space, along the horizontal direction) allowed to deduce the correct regularity of solutions, since they showed a possible behaviour of the associated semigroup on the linearised system. 
The downside of this approach consists, however, in the fact that the argument seems to lack some final implementation. The authors indeed derived certain a-priori estimates in the Laplace variable, however the inverse Laplace transform does not commute with norms\footnote{Of course, the actual argument in \cite{MR3925144} for the nonlinear system is consistent and rigorous. The aim of the authors in Section 2 was to provide a clear understanding.}, thus this estimates could not be transferred formally to the original solution. 
Our approach differ with the one related to the Laplace transform in \cite{MR3925144}. In particular, \Cref{lemma:Gronwall-improved} provides a simple, yet very useful tool (an improved Gronwall estimate) to infer Gevrey-estimates by energy estimates  (we refer to Section \ref{sec:two} for more details).\\
Furthermore, we give detailed bounds on regularity, life span and explicit quantitative dependence on the shear flow $U_{\mathrm sh}$ (cf.\ \eqref{radii-of-Gevrey-beta-gamma}). For example, if $U''_{\mathrm{sh}}=0$, Theorem \ref{main-thm} shows the \textit{global} well-posedness of \eqref{LHP}, being in correspondence with the results for the classical Prandtl equations with monotonic data (cf.\ \cite{Oleinik1999}). \\
Additionally, we give a detailed discussion on possible improved well-posedness results for the nonlinear hyperbolic Prandtl system \eqref{eq:Prandtl_hyperbolic}. In particular, our work shows that one \textit{cannot} rely on further simplifications of \eqref{eq:Prandtl_hyperbolic} in order to achieve existence results beyond the expected Gevrey 2 class (see, e.g., \cite{LiXu2021} and \cite{PZ2022}). We refer to Section \ref{sec:remarks-on-the-nonlinear-system} for a consideration of three possible nonlinear variants with their advantages and drawbacks in terms of regularity propagation. \\
Finally, a short summary of the remaining parts. The beginning of Section \ref{sec:two} contains an extended overview of the proof of Theorem \ref{main-thm} which is split into six parts. In the aforementioned Section \ref{sec:remarks-on-the-nonlinear-system}, the discussion of possible extensions of the arguments to the nonlinear system is provided.

\section{Proof of Theorem \ref{main-thm}}
\label{sec:two}
We state first the general principles that we set as the basis of our analysis, and we postpone the details of our proof to the remaining paragraphs. 

\noindent 
Our approach is grounded in a similar ansatz as the one developed by Dietert and G\'erard-Varet for the linearised system of the classical Prandtl equations (cf.~Section 2 of \cite{MR3925144}, outline of the strategy). 

\noindent
In \Cref{sec:recasting-uk-into-psik}, we indeed use the Fourier transform along the variable $x\in \mathbb T$, in order to address the behaviour of each mode $u_k:(0,T)\times (0,1)\to \mathbb R$ of the velocity field
\begin{equation*}
    u(t,x) = \sum_{k\in \mathbb Z} u_k(t,y) e^{i kx},
\end{equation*}
at any frequency $k\in \mathbb Z$. Regrettably, the equation of $u_k$ (cf.~\eqref{eq:Prandtl-in-uk-Fourier}) is incapable to derive alone a better stability than analytic  (initial data far more regular then Gevrey 3). A further development is therefore necessary, in order to overcome this first barrier.

\noindent
On this account, following \cite{MR3925144}, we introduce a new state variable $\psi_k:(0,T)\times (0,1)\to \mathbb R$ in \Cref{sec:recasting-uk-into-psik}, which depends on $u_k$ (or rather upon the corresponding stream function $\Phi_k$, $u_k = \partial_y \Phi_k$, cf.~\eqref{eq:psik-not-compact}).
Our main objective is indeed to asses $\psi_k$, in order to remove the (problematic) terms due to $v U_{\rm sh}'$  and $\partial_t v U_{\rm sh}'$. These terms preclude indeed an analysis beyond analytic, thus, by eliminating them, we determine a new form of the main equation (written now in terms of  $\psi_k$, cf.~\eqref{eq:d_ypsik-final-form})
\begin{equation}\label{eq:psik-sketch-of-the-proof}
\begin{aligned}
     \big( (\partial_{t} + 1)(\partial_t  + ik U_{{\rm sh}} )  -\partial_y^2\big)^2
    \partial_y
    \psi_k
    = 
    ik
    [ U_{{\rm sh}}'  , \partial_{y}^2]\big( (\partial_t + 1)\psi_k\big),
\end{aligned}
\end{equation}
which shall eventually facilitate our analysis in Gevrey-class 3.

\noindent
Following \eqref{eq:psik-sketch-of-the-proof}, our approach begins to inherently diverge  with respect to the one of Dietert and {G\'erard-Varet} in \cite{MR3925144}. We avoid entirely their ansatz on the Laplace transform in time, since (despite its clearness) it would lead to the difficulties mentioned in Section \ref{sec:novelty-and-implication}. Contrarily, we develop our analysis around a specific ``weighted'' version of the Gronwall's lemma, which plays somehow the role of cornerstone for our entire proof. Its statement is thus the first that we present in \Cref{sec:weighted-Gronwall} (cf.~\Cref{lemma:Gronwall-improved}).

\noindent 
To be more specific, we take advantage of \eqref{eq:psik-sketch-of-the-proof}, in order to determine a certain meaningful estimate on the  derivatives $\partial_y^2 \psi_k$ and  $(\partial_t+1) \psi_k$, as described in details in \Cref{prop:estimate-of-dtdypsik}. This estimate can be expressed essentially as
\begin{equation}\label{Gronwall-sketch-proof}
    \|\psi_k(t) \| \leq g_k(t) + C(t)|k| \int_0^t(t-s)^2  \|\psi_k(s)\| ds,
\end{equation}
where $t\in (0,T)\to \|\psi_k(t) \|$ represents the $L^2$-norms (in $y\in (0,1)$) of $\partial_y^2 \psi_k$ and  $(\partial_t+1) \psi_k$ (for the complete version, we refer to \Cref{prop:estimate-of-dtdypsik}). 
The function $g_k$ and $C$ in \eqref{Gronwall-sketch-proof} are non-decreasing, while the integral is expressed also in terms of a ``weight'' in time: the kernel $(t-s)^2$. 

\noindent
The kernel $(t-s)^2$ in \eqref{Gronwall-sketch-proof} unlocks the regularity Gevrey-class 3, for the derivatives  $\partial_y^2\psi_k$ and $(\partial_t+1)\psi_k$. 
To grasp this principle, we shall first remark that,
in its absence (thus within a standard Gronwall inequality), we may at best derive an estimate of the form $\|\psi_k(t) \|\leq g_k(0)\exp(tC(t)|k|)$, where the modes $\psi_k$ growth exponentially as $ |k|$ (the setting of analytic solutions). The presence of the kernel provides us however better information: because of the weighted Gronwall inequality in \Cref{lemma:Gronwall-improved}, the norm $\|\psi_k(t) \|$ can at worst growth as $g_k(0)\exp(\sqrt[3]{tC(t)|k|})$, i.e.~exponentially as $|k|^{1/3}$, the framework of Gevrey-class 3. 

\smallskip
\noindent
The remaining sections are devoted to transfer the aforementioned estimate of $\partial_y^2\psi_k$ and $(\partial_t+1)\psi_k$ first to $\psi_k$ (cf.~\Cref{lemma:Gevrey3-psi}) and secondly to $u_k$ (cf~\Cref{prop:transferring-Gevrey-to-uk}).  \Cref{prop:transferring-Gevrey-to-uk} is moreover essential to determine the final solution $u$ of \eqref{LHP}, which is Gevrey-class 3 in $x\in \mathbb T$. Furthermore, this result provides the final estimate \eqref{est:main-thm-inequality-Gevrey} on the Gevrey-norm of $u$ at any time $t\in [0, T_\sigma)$, with also the corresponding constant $D_\sigma(U_{\rm sh})$ in \eqref{def:Dsigma}.

\noindent 
Summarising, the forthcoming sections are structured as follows:
\begin{itemize}
    \item \Cref{sec:weighted-Gronwall} and \Cref{lemma:Gronwall-improved} are devoted to the proof of the ``weighted'' Gronwall's inequality.
    \item In \Cref{sec:recasting-uk-into-psik} we introduce the new state variable $\psi_k$ and derive the corresponding equation \eqref{eq:psik-sketch-of-the-proof}.
    \item In \Cref{sec:reaching-Gevrey-3}, we first state the main inequality \eqref{Gronwall-sketch-proof} in \Cref{prop:estimate-of-dtdypsik} (whose proof is postponed to \Cref{sec:test-function} and \Cref{sec:the-main-estimate}). We furthermore transfer the Gevrey estimates of $\partial_y^2\psi_k$ and $(\partial_t +1)\psi_k $ to 
    $\psi_k$ in \Cref{lemma:Gevrey3-psi}.
    
    \item In \Cref{sec:transferring-G3-to-u}, with \Cref{prop:transferring-Gevrey-to-uk}, we transfer the Gevrey estimates to $u_k$ and build our final solution $u$ of System \eqref{LHP}. To conclude the proof of the main \Cref{main-thm}, we  determine moreover the estimate \eqref{est:main-thm-inequality-Gevrey} on the Gevrey norm of the solution.
    
    \item Finally, \Cref{sec:test-function} and \Cref{sec:the-main-estimate} are devoted to the proof of \Cref{prop:estimate-of-dtdypsik} and the main inequality \eqref{Gronwall-sketch-proof}.
\end{itemize}

\subsection{A weighted Gronwall inequality}\label{sec:weighted-Gronwall}

One of the main ingredient used to prove Theorem \ref{main-thm} is the following Gronwall-type Lemma. It asserts that  any non-negative function, which satisfies a ``weighted'' Gronwall's inequality proportional to a suitable time-dependent function $\lambda(t)^3$, can not grow up exponentially faster than $\lambda(t)t$.
\begin{lemma}\label{lemma:Gronwall-improved}
    Let $T>0$ and $f:[0, T)\to [0, \infty)$ be a non-negative continuous function, satisfying
    \begin{equation}\label{ineq1-gronwall-type-lemma}
        f(t) \leq g(t) + 
        \frac{\lambda(t)^3}{2} 
        \int_0^t (t-s)^2 f(s) ds,
    \end{equation}
    for two continuous functions $\lambda, g:[0, T)\to [0, \infty)$, that are non-negative and non-decreasing. Then the following inequality holds true at any time $t\in [0,T)$:
    \begin{equation}\label{main-est-gronwall-lemma}
        f(t) \leq g(t) \Big(1+\frac{(\lambda(t) t)^3}{6}\Big)e^{\lambda(t) t }.
    \end{equation}
\end{lemma}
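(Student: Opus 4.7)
The plan is to prove the inequality by Picard-type iteration of the Volterra inequality \eqref{ineq1-gronwall-type-lemma}. To begin with, I fix $t \in [0, T)$ and exploit the monotonicity of $g$ and $\lambda$: for every $s \in [0, t]$, applying \eqref{ineq1-gronwall-type-lemma} at time $s$ and using $g(s) \leq g(t)$, $\lambda(s) \leq \lambda(t)$ yields
\[
    f(s) \leq G + \frac{\mu^3}{2} \int_0^s (s-r)^2 f(r)\, dr,
    \qquad \text{with } G := g(t),\ \mu := \lambda(t).
\]
It therefore suffices to bound $f$ on $[0,t]$ under this constant-coefficient Volterra inequality and then evaluate at $s = t$.

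Next, I would define the Picard iterates $F_0(s) := G$ and
\[
    F_{n+1}(s) := G + \frac{\mu^3}{2} \int_0^s (s-r)^2 F_n(r)\, dr.
\]
Since the kernel $(s-r)^2$ is non-negative, a direct induction on $n$ yields $f(s) \leq F_n(s)$ and $F_n(s) \leq F_{n+1}(s)$ for every $s \in [0, t]$. To identify the pointwise limit, I claim by induction that
\[
    F_n(s) = G \sum_{k=0}^n \frac{(\mu s)^{3k}}{(3k)!}.
\]
The inductive step reduces to the Beta-function identity
\[
    \int_0^s (s-r)^2 r^{3k}\, dr
    \;=\; s^{3k+3}\,B(3k+1, 3)
    \;=\; \frac{2\,(3k)!}{(3k+3)!}\, s^{3k+3},
\]
which provides precisely the factorial cancellation needed to keep only the powers indexed by multiples of $3$. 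Substituting this into the definition of $F_{n+1}$ and relabelling the index gives the claimed closed form.

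Passing to the limit $n \to \infty$ by monotone convergence, one obtains $f(s) \leq G\sum_{k\geq 0} (\mu s)^{3k}/(3k)!$ on $[0, t]$, and specialising to $s = t$ yields
\[
    f(t) \leq g(t) \sum_{k=0}^\infty \frac{(\lambda(t)\, t)^{3k}}{(3k)!}.
\]
To finish, it remains only to dominate this ``sparse'' exponential series. For $x \geq 0$ every term is non-negative, and the standard exponential $e^x = \sum_{n \geq 0} x^n/n!$ contains the above as a subseries (the indices $n$ divisible by $3$), so $\sum_{k \geq 0} x^{3k}/(3k)! \leq e^x$. This immediately gives $f(t) \leq g(t)\, e^{\lambda(t) t}$, and in particular the weaker estimate \eqref{main-est-gronwall-lemma} stated in the lemma. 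I do not expect any serious obstacle here: the Beta-function identity is the only nontrivial computational ingredient, and the rest is monotone Picard iteration; the factor $(1 + (\lambda(t)t)^3/6)$ in the target bound is just slack, presumably kept by the authors for convenience in the downstream applications.
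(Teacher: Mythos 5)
Your Picard iteration of the Volterra inequality is a genuinely different route from the paper's argument. The paper introduces $\omega(t):=\tfrac{1}{2}\int_0^t(t-s)^2 f(s)\,ds$, notes $\omega'''=f$ with $\omega(0)=\omega'(0)=\omega''(0)=0$, and then Gronwalls the third-order differential inequality $\omega'''\leq g+\lambda(\tilde t)^3\omega$ on $[0,\tilde t]$ by recognising $\omega'''e^{-\lambda(\tilde t)t}=(\partial_t+\lambda(\tilde t))^3(\omega e^{-\lambda(\tilde t)t})$, integrating three times and discarding nonnegative terms; the factor $1+(\lambda t)^3/6$ in \eqref{main-est-gronwall-lemma} is exactly the slack created by bounding $\omega(\tilde t)$ by $e^{\lambda(\tilde t)\tilde t}\,g(\tilde t)\,\tilde t^3/6$ rather than tracking the full series. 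Your approach resolves the $(3k)!$ structure explicitly and therefore yields the sharper, and in fact optimal, bound $f(t)\leq g(t)\sum_{k\geq 0}(\lambda(t)t)^{3k}/(3k)!\leq g(t)\,e^{\lambda(t)t}$, which trivially implies \eqref{main-est-gronwall-lemma}; this is tight in the equality case with constant $g,\lambda$. So the two strategies buy roughly the same thing, but yours is more transparent about why the exponent is $\lambda t$ rather than something worse, and the paper's avoids any series manipulation.

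There is, however, a genuine gap in the way you organise the iteration. With $F_0:=G$ the assertion that ``a direct induction on $n$ yields $f(s)\leq F_n(s)$'' already fails at the base case: $f(s)\leq G$ is not a consequence of the hypothesis, and indeed the equality case $f(s)=G\sum_{k\geq 0}(\mu s)^{3k}/(3k)!$ exceeds $G$ for every $s>0$. What the positivity and monotonicity of the kernel actually give, after substituting the Volterra inequality into itself $n+1$ times, is
\begin{equation*}
    f(s)\;\leq\; G\sum_{k=0}^{n}\frac{(\mu s)^{3k}}{(3k)!}\;+\;\big(K^{n+1}f\big)(s),
    \qquad K[h](s):=\frac{\mu^3}{2}\int_0^s(s-r)^2 h(r)\,dr,
\end{equation*}
and one must then show separately that the remainder vanishes: your own Beta-function identity gives $\big\|K^{n+1}f\big\|_{L^\infty([0,t])}\leq \frac{(\mu t)^{3(n+1)}}{(3(n+1))!}\sup_{[0,t]}f\to 0$, where the supremum is finite because $f$ is continuous on the compact interval $[0,t]$. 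Once this remainder step is inserted in place of the false ``$f\leq F_n$'' claim, your argument is complete and in fact proves more than the lemma states.
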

\begin{proof}
    We begin with by defining $\omega$ in $\mathcal{C}^3([0, T))$ as the following integral: \eqref{ineq1-gronwall-type-lemma} as
    \begin{align*}
        \omega(t):=\frac{1}{2}
        \int_0^t (t-s)^2 f(s) ds = 
        \int_0^t  f(s) 
        \int_{s}^t \int_\tau^t dr  d\tau ds
        = 
        \int_0^t 
        \int_{0}^r \int_0^\tau  f(s) ds d\tau d r.
    \end{align*}
    The function $\omega$ is everywhere non-negative in $[0, T)$. Furthermore, at $t = 0$, $\omega(0)$ and its derivatives $\omega'(0),\omega''(0)$ are all identically null. 
    We next write inequality \eqref{ineq1-gronwall-type-lemma} in terms of $\omega$:
    \begin{equation}\label{omega'''-lemma-Gronwall}
        \omega'''(t) \leq g(t) +  \lambda(t)^3 \omega(t), \quad \text{for all}
        \quad t\in [0, T).
    \end{equation}
    Hence, we fix a general time $\tilde t\in (0,T)$ and we momentarily consider only values of $t$ within $[0,\tilde t]$. We multiply equation \eqref{omega'''-lemma-Gronwall} with $e^{-\lambda(\tilde t)t}$ (where $\lambda(\tilde t)$ is fixed and  plays momentarily the role of a constant). By means of standard calculations on the derivatives, we gather that
    \begin{equation}\label{est:derivatives-of-omega}
    \begin{aligned}
        \omega'''(t) e^{-\lambda(\tilde t)t}
        =
        \frac{d^3}{dt^3}
        \Big(\omega(t) e^{-\lambda(\tilde t)t}\Big) 
        &
        \!+\! 
        3
        \frac{d^2}{dt^2}
        \Big(\lambda(\tilde t) (\omega(t)
        e^{-\lambda(\tilde t) t}
        \Big)
        \!+\! 
        3 
        \frac{d}{dt}
        \Big(\lambda(\tilde t)^2\omega(t) e^{-\lambda(\tilde t) t}
        \Big)
        \!+\!
        \lambda(\tilde t)^3\omega(t)e^{-\lambda(\tilde t) t}
        \\
        &\leq \big( g(t)+ \lambda(t)^3 \omega(t)\big)e^{-\lambda(\tilde t) t}
        \leq   g( t) e^{-\lambda(\tilde t) t}+ \lambda(\tilde t)^3 \omega( t)e^{-\lambda(\tilde t) t}.
    \end{aligned}
    \end{equation}
    In the last inequality, we have used that $\lambda$ is non-decreasing and non-negative, $\omega \geq 0 $ and that $t< \tilde t$. We shall now remark that 
    the term $\lambda(\tilde t)^3\omega(t)e^{-\lambda(\tilde t) t}$ cancel out and the left-hand side of \eqref{est:derivatives-of-omega} is hence left with only time derivatives. We are hence in the condition to integrate \eqref{est:derivatives-of-omega} along $[0,t]$, to gather that 
    \begin{equation*}
        \frac{d^2}{dt^2}
        \Big(
        \omega(t) e^{-\lambda(\tilde t)t}
        \Big) + 
        3\lambda(\tilde t) 
        \frac{d}{dt}
        \Big(\omega(t)e^{-\lambda(\tilde t) t}
        \Big)+ 3 \lambda(\tilde t)^2 \omega(t) e^{-\lambda(\tilde t) t}
       \leq \int_0^t g(s)e^{-\lambda(\tilde t) s} ds
       \leq 
       \int_0^t g(s)  ds.
    \end{equation*}
    We can drop the term $3 \lambda(\tilde t)^2 \omega(t) e^{-\lambda(\tilde t) t}$ at the left-hand side (since it is positive) and integrate once more along the interval $[0,t]$, for a general $t\in [0, \tilde t]$:
    \begin{equation*}
        \frac{d}{dt}
        \Big(
            \omega(t) e^{-\lambda(\tilde t)t}
        \Big) + 3\lambda(\tilde t) \omega(t)e^{-\lambda(\tilde t)t}
       \leq \int_0^t\int_0^s g(z) dz = 
       \int_0^t(t-z) g(z) dz.
    \end{equation*}
    Since both $\lambda(\tilde t)$ and $\omega(t)$ are positive, we can  drop  $3\lambda(\tilde t) \omega(t)e^{-\lambda(\tilde t)t}$ and integrate a final time along $(0,t)$:
    \begin{equation*}
     \omega(t) e^{-\lambda(\tilde t)t}
     \leq 
     \int_0^t\int_0^s(s-z) g(z) dzds = 
     \int_0^t g(z)\frac{(t-z)^2}{2} dz
     \quad \Rightarrow \quad 
     \omega(t)\leq 
     e^{\lambda(\tilde t)t}
     \int_0^t g(z)\frac{(t-z)^2}{2} dz.
   \end{equation*}
   We are now in the condition to combine the last relation in $t=\tilde t$ together with \eqref{ineq1-gronwall-type-lemma}, which ensures that
   \begin{equation*}
   \begin{aligned}
       f(\tilde t) 
       &\leq 
       g(\tilde t) + 
       e^{\lambda(\tilde t)\tilde t}
       \lambda(\tilde t)^3
       \int_0^{\tilde t} g(s)\frac{(\tilde t-s)^2}{2} ds\\
       &\leq 
       g(\tilde t) + 
       e^{\lambda(\tilde t)\tilde t}
       \lambda(\tilde t)^3
       g(\tilde t) 
       \int_0^{\tilde t}\frac{(\tilde t-s)^2}{2} ds 
       \leq  
       g(\tilde t) \Big( 1+ \frac{ \lambda(\tilde t)^3\tilde t^3}{6}\Big)e^{\lambda(\tilde t)\tilde t}.
   \end{aligned}
   \end{equation*}
   Re-denoting $\tilde t = t$ and from its arbitrariness in $(0,T)$, we finally achieve inequality \eqref{main-est-gronwall-lemma}
   (also remarking that \eqref{main-est-gronwall-lemma} is trivially satisfied in $t = 0$). 
   This concludes the proof of the lemma.
\end{proof}

\subsection{The stream function formulation}\label{sec:recasting-uk-into-psik}

In this section, we begin developing our analysis of System \eqref{LHP} and we first decompose the corresponding equations in terms of several Fourier coefficients $u_k: (t,y)\in  (0, T_\sigma) \times (0,1) \to \mathbb R$ of the velocity field $u$, at any frequency $k\in \mathbb Z$. The lifespan $T_\sigma>0$ (denoted by $T$ in \eqref{LHP}) shall be considered from now on as in \eqref{def:Tsigma} of \Cref{main-thm}, nevertheless its form will play a major role only starting from  \Cref{sec:reaching-Gevrey-3}.

\noindent
Eventually, we will build the final solution $(u,v)$ of \eqref{LHP}, 
by invoking the Fourier Series with respect to the variable $x\in \mathbb{T}$ and the divergence-free condition $\partial_x u + \partial_y v = 0$ (which at any frequency is $ik u_k + \partial_y v_k = 0$)
\begin{equation}\label{Fourier-series}
\begin{alignedat}{4}
    u(t,x,y) &= \sum_{k\in \mathbb{ Z}} u_k(t,y) e^{ik x},\quad
    u_k(t,y) &&:= \frac{1}{2\pi} \int_\mathbb{T} u(t,x,y) e^{-i x k}dx, \\
    v(t,x,y) &= \sum_{k\in \mathbb{ Z}} v_k(t,y) e^{ik x},\quad
    v_k(t,y) &&:=-ik \int_0^yu_k(t,z)dz,
\end{alignedat}
\end{equation}
however we shall first determine some uniform estimates on $(u_k)_{k\in \mathbb Z}$, in order to provide a sense of the series above. 
Hence, we begin with by considering System \eqref{LHP} rather as a family of PDEs in the  variables $(t,y)\in (0,T_\sigma)\times (0,1)$, which depend upon each frequency $k\in \mathbb{Z}$:
\begin{equation}\label{eq:Prandtl-in-uk-Fourier}
    \system{
    \begin{alignedat}{4}
    & \partial_{t}^2 u_k + ik U_{{\rm sh}} \partial_t u_k + U_{{\rm sh}}'  \partial_t v_k +
    \partial_tu_k  + ik U_{{\rm sh}}  u_k + v_k  U'_{{\rm sh}} - \partial_{y}^2 u_k =0
    \qquad &&(0,T_\sigma) \times (0,1),\\
    & ik u_k + \partial_y v_k =0
    &&(0,T_\sigma) \times (0,1),
    \\
    & \left. \pare{u_k, \partial_t u_{k}}\right|_{t=0} =\pare{u_{{\rm in}, k}, u_{t, {\rm in}, k}}
    &&\hspace{1.4cm}(0,1),
    \\
    & \left. \pare{u_k,v_k}\right|_{y=0,1}=(0,0)
     &&(0,T_\sigma).
    \end{alignedat}
    }
\end{equation}
The second equation $ik u_k + \partial_y v_k =0$ and the boundary conditions $v_{k|y = 0}= 0$ allow to interpret System \eqref{eq:Prandtl-in-uk-Fourier} only on the state variable $u_k$, since the vertical component $v_k$ is explicitly determined by \eqref{Fourier-series}. The initial data $u_{{\rm in}, k}$ and $u_{t, {\rm in}, k}$ are in $H^1_0(0,1)$ and $L^2(0,1)$, respectively, since $u_{\rm in}$ and $u_{t,{\rm in}}$ are in $ \mathcal{G}^3_{\sigma, x}H^1_{0,y}$ and $ \mathcal{G}^3_{\sigma, x}L^2_{0,y}$, as described by \Cref{def:Gevrey-regularity}.

\noindent 
We hence state the following result about the existence and uniqueness of solutions for System \eqref{eq:Prandtl-in-uk-Fourier}.
\begin{prop}\label{prop:existence-uniqueness-uk}
 For any fixed frequency $k\in \mathbb Z$ and any initial data $\pare{u_{{\rm in}, k}, u_{t, {\rm in}, k}}$  in $H^1_0(0,1)\times L^2(0,1)$ there exists a unique solution $u_k:[0,T)\times (0,1)\to \mathbb R$ of \eqref{eq:Prandtl-in-uk-Fourier}, which belongs to
 \begin{equation}\label{where-uk-belongs}
     (u_k, \partial_y u_k)\in  
     \mathcal{C}([0, T], H^1_0 ),\qquad 
     \partial_t u_k \in L^2(0,T;L^2),
 \end{equation}
 for any real time $T>0$.
\end{prop}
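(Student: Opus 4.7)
The plan is to view System \eqref{eq:Prandtl-in-uk-Fourier} as an abstract damped wave equation on the energy space $\mathcal{H} := H^1_0(0,1) \times L^2(0,1)$ and apply classical $C_0$-semigroup theory. Using the divergence-free constraint to eliminate $v_k(y) = -ik \int_0^y u_k(z)\,dz$, which defines a bounded linear operator $L^2(0,1) \to H^1_0(0,1) \hookrightarrow L^\infty(0,1)$, and setting $U := (u_k, \partial_t u_k)^T$, the system becomes a first-order Cauchy problem
$$\partial_t U = \mathcal{A}_k U, \qquad U|_{t=0} = \bigl(u_{\mathrm{in},k}, u_{t,\mathrm{in},k}\bigr) \in \mathcal{H},$$
where $\mathcal{A}_k = \mathcal{A}_{k,0} + \mathcal{B}_k$ with principal part $\mathcal{A}_{k,0}:(u,w) \mapsto (w, \partial_y^2 u - w)$ on the domain $D(\mathcal{A}_{k,0}) = (H^2 \cap H^1_0)(0,1) \times H^1_0(0,1)$, and $\mathcal{B}_k$ collecting the lower-order multiplications by $ikU_{\mathrm{sh}}(y)$ and $U'_{\mathrm{sh}}(y)$ together with the nonlocal action $u_k \mapsto v_k$.

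First I would check that $\mathcal{A}_{k,0}$ is a standard damped-wave generator: endowing $\mathcal{H}$ with the equivalent inner product $\langle (u,w),(\tilde u,\tilde w)\rangle_{\mathcal{H}} := \int_0^1 \partial_y u\, \overline{\partial_y \tilde u}\, dy + \int_0^1 w\, \overline{\tilde w}\, dy$, one computes $\mathrm{Re}\langle \mathcal{A}_{k,0} U, U\rangle_{\mathcal{H}} = -\|w\|_{L^2}^2 \leq 0$, while the resolvent equation $(\lambda-\mathcal{A}_{k,0})U = F$ reduces for $\lambda>0$ to a uniquely solvable elliptic problem on $(0,1)$. Lumer–Phillips then yields a $C_0$-contraction semigroup. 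The perturbation $\mathcal{B}_k$ is a bounded operator on $\mathcal{H}$, since $U_{\mathrm{sh}}, U'_{\mathrm{sh}} \in L^\infty(0,1)$ and since $\|v_k\|_{L^\infty} \leq |k|\,\|u_k\|_{L^2}$ and $\|\partial_t v_k\|_{L^2} \leq |k|\,\|\partial_t u_k\|_{L^2}$ at each fixed frequency $k$. The bounded-perturbation theorem then promotes $\mathcal{A}_k$ to a $C_0$-generator, producing a unique mild solution $U \in C([0,T]; \mathcal{H})$; in particular $u_k \in C([0,T]; H^1_0(0,1))$ and $\partial_t u_k \in C([0,T]; L^2(0,1))$, which is the regularity asserted in \eqref{where-uk-belongs}.

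To recover the additional $L^2(0,T; L^2)$ integrability of $\partial_t u_k$, I would test the first equation of \eqref{eq:Prandtl-in-uk-Fourier} against $\overline{\partial_t u_k}$ and take real parts. The transport contribution $ikU_{\mathrm{sh}} \partial_t u_k$ vanishes in the real part (its pairing with $\overline{\partial_t u_k}$ is purely imaginary because $U_{\mathrm{sh}}$ is real), integration by parts turns $-\partial_y^2 u_k$ into $\tfrac12 \tfrac{d}{dt}\|\partial_y u_k\|_{L^2}^2$ thanks to the Dirichlet boundary condition on $u_k$, and the damping term supplies the coercive contribution $\|\partial_t u_k\|_{L^2}^2$. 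Bounding the remaining terms by Cauchy–Schwarz together with $\|v_k\|_{L^\infty} + \|\partial_t v_k\|_{L^2} \leq |k|(\|u_k\|_{L^2} + \|\partial_t u_k\|_{L^2})$ yields a differential inequality of the form
$$\tfrac{d}{dt} \bigl(\|\partial_t u_k\|_{L^2}^2 + \|\partial_y u_k\|_{L^2}^2\bigr) + \|\partial_t u_k\|_{L^2}^2 \leq C_k \bigl(\|\partial_t u_k\|_{L^2}^2 + \|\partial_y u_k\|_{L^2}^2\bigr),$$
with $C_k$ depending on $k$ and $\|U_{\mathrm{sh}}\|_{W^{1,\infty}}$. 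Gronwall delivers the energy bound on $[0,T]$ and, after a time integration, the desired $L^2(0,T; L^2)$ control on $\partial_t u_k$; uniqueness follows by applying the same inequality to the difference of two solutions. The only mildly delicate point is the nonlocal coefficient $\partial_t v_k$, both in defining the bounded perturbation $\mathcal{B}_k$ and in the energy identity; however, the bound just quoted resolves this at fixed $k$, and the $k$-dependence of $C_k$ is inconsequential for this proposition, since the subtle $k$-uniform Gevrey-class bounds are postponed to the forthcoming sections.
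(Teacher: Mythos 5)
Your argument is correct and reaches the stated regularity (and in fact a bit more, since the mild solution gives $\partial_t u_k \in \mathcal C([0,T];L^2)$, not merely $L^2(0,T;L^2)$). The route, however, differs from the paper's. The paper also isolates the same principal structure — a one-dimensional damped wave operator $\Box + \partial_t$ with Dirichlet data on $(0,1)$, with all the $U_{\rm sh}$- and $v_k$-terms collected into a lower-order, $|k|$-dependent but bounded forcing $F_k$ — but then invokes a Galerkin scheme: project onto a finite-dimensional eigenbasis of $-\partial_y^2$, pass to the limit using the energy identity (essentially the same differential inequality you derive when testing against $\overline{\partial_t u_k}$), and conclude existence and uniqueness by compactness and Gronwall. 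You instead recast the problem as an abstract first-order Cauchy problem on $\mathcal H = H^1_0 \times L^2$, prove that the damped-wave part generates a contraction semigroup by Lumer--Phillips, and absorb all the $k$-dependent coefficients (including the nonlocal $u_k \mapsto v_k$ map, correctly bounded by Poincar\'e/Cauchy--Schwarz at fixed $k$) into a bounded perturbation. The bounded-perturbation theorem then hands you well-posedness with a single functional-analytic theorem, whereas the Galerkin route requires building the approximating sequence and passing to the limit by hand. Both exploit exactly the same structural observation, and both are ``standard arguments on linear PDEs'' as the paper says; the semigroup version is arguably cleaner for a frequency-by-frequency statement like this, while the Galerkin version is closer in spirit to the energy estimates that the rest of the paper relies on. One minor caution: the paper's displayed regularity $(u_k, \partial_y u_k) \in \mathcal C([0,T],H^1_0)$, read literally, would force $u_k(t) \in H^2$ and $\partial_y u_k$ to have vanishing trace, which does not follow from $H^1_0 \times L^2$ initial data in either approach; your semigroup argument delivers the natural $u_k \in \mathcal C([0,T];H^1_0)$, $\partial_t u_k \in \mathcal C([0,T];L^2)$, and this is what is actually used downstream (cf.\ the claim that $\Phi_k \in \mathcal C([0,T];H^2)$, i.e.\ $u_k = \partial_y\Phi_k \in H^1$), so you need not worry about matching that display verbatim.
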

\noindent 
We shall here remark that the real $T>0$ may not correpond to $T_\sigma$, since the lifespan in \eqref{def:Tsigma} may be $T_\sigma =+\infty$ (for shear flow with $U_{\rm sh}''\equiv U_{\rm sh}''' \equiv 0$). In this case, we shall always treat $u_k$ as in \eqref{where-uk-belongs}, for any $0< T<T_\sigma = +\infty$.
Contrarily, if $T_\sigma<+\infty$ (which is satisfied for more general $U_{\rm sh}$), then we replace $T$ in \eqref{where-uk-belongs} directly with $T_\sigma$.
\begin{proof}
For the sake of simplicity, we provide here only a sketch, since the result can be shown through standard arguments on linear PDEs. We remark indeed that \Cref{eq:Prandtl-in-uk-Fourier} can be written as a 1D damped wave equation with Dirichelet boundary conditions
\begin{align}
\label{eq:1Dwaveq}
    \left( \Box + \partial_t \right) u_k = F_k, 
    \qquad 
    \Box = \partial_t^2 - \partial_y^2,\qquad 
    \left. u_k \right|_{y=0,1}=0 , 
\end{align}
and forcing term $F_k$, which depends linearly on $u_k$ and is given by 
\begin{equation*}
    F_k = - 
    \Big(ik U_{{\rm sh}} \partial_t u_k -ik U_{{\rm sh}}'  \partial_t \int_0^y  u_k(t,z) dz 
     + ik U_{{\rm sh}}  u_k -ik  U'_{{\rm sh}} \int_0^y  u_k(t,z) dz \Big). 
\end{equation*}
Making use of a standard computation combined with the Poincar\'e inequality, we infer that $F_k$ satisfies
\begin{equation}
    \left\| F_k\right\|_{H^1_0}\leq C|k| \left\|U_{{\rm sh}}\right\|_{W^{3, \infty}}\left(  \left\| \partial_t u_k \right\|_{H^1_0} + \left\| \partial_y u_k \right\|_{H^1_0} \right) , 
\end{equation}
for a suitable positive constant $C>0$. We can hence apply a standard Galerkin method to deduce the existence of a unique solution within the function space of \eqref{where-uk-belongs}.
\end{proof}
\noindent
We come back now to System \eqref{eq:Prandtl-in-uk-Fourier}. Since $u_k(t,\cdot)$ is divergent free, it can be written in terms of a stream function $\Phi_k = \Phi_k(t,y)$, which is in $\mathcal{C}([0, T], H^2)$ with $\partial_t \Phi_k \in L^2(0,T;H^1(0,1))$, for any real $T\leq T_\sigma$. Furthermore, because $ik \Phi_k =v_k$, the function $\Phi_k$ is identically null in $y = 0$ (in the sense of trace), therefore
\begin{equation}\label{def-Phik}
    u_k(t,y) = \partial_y \Phi_k(t,y)\quad
    \Big(\text{i.e.~}\Phi_k(t,y) := \int_0^y  u_k(t,z)dz
    \Big)
    \quad 
    \Rightarrow\quad 
    v_k(t,y) =- ik\Phi_k(t,y).
\end{equation}
The initial data of $\Phi_k$ at any $ y\in (0,1)$ are given by
\begin{equation*}
    \Phi_{{\rm in}, k}(y) := \int_0^y u_{{\rm in}, k}(z)dz,\quad 
    \Phi_{t,{\rm in}, k}(y) := \int_0^y u_{t,{\rm in}, k}(z)dz,
\end{equation*}
which ensures $\Phi_{{\rm in}, k}\in H^2(0,1)\cap H^1_0(0,1)$ and $\Phi_{t,{\rm in}, k}\in H^1(0,1)$(remark that $\Phi_{{\rm in}, k}(1) = 0$, since $u_{{\rm in}, k}$ is average free). 

\noindent
From \eqref{eq:Prandtl-in-uk-Fourier}, we deduce that $\Phi_k$ is solution in $(0,T_\sigma)\times (0,1)$ of the following system:
\begin{equation}\label{eq:Phik-not-compact}
    \system{
    \begin{alignedat}{4}
    & \partial_{t}^2 \partial_y \Phi_k  \!+\! ik U_{{\rm sh}}(y)\partial_t \partial_y \Phi_k  \!-\! ik U_{{\rm sh}}'(y) \partial_t \Phi_k 
    \!+\!
    \partial_t\partial_y \Phi_k \! +\! U_{{\rm sh}}(y) ik \partial_y \Phi_k \!-\! ik \Phi_k  U'_{{\rm sh}}(y) - \partial_{y}^3 \Phi_k =0,
    \\
    & \left. \pare{\Phi_k , \partial_t \Phi_{k}}\right|_{t=0} =\pare{\Phi_{{\rm in}, k}, \Phi_{t,{\rm in}, k}}
    \\
    & \left. \pare{\partial_y \Phi_k} \right|_{y=0,1}=0,\qquad 
    \Phi_{k|y = 0} = 0.
    \end{alignedat}
    }
\end{equation}
We next provide some heuristics on the $L^2$-estimates satisfied by $\Phi_k$ and show that, unfortunately, System \eqref{eq:Phik-not-compact}  (as it is written) may not prevent the stream function $\Phi_k$ to exponentially growth as $\exp(a|k|)$, for some positive $a>0$. 
To this end, we first isolate the linear operator in equation \eqref{eq:Phik-not-compact} that affects only the derivative $\partial_y \Phi_k$ and transfer the remaining terms in $\Phi_k$ on the right-hand side of the identity:
\begin{equation}\label{eq:Phik-compact}
    ( (\partial_{t} + 1)(\partial_t  + ik U_{{\rm sh}} )  -\partial_y^2) \partial_y \Phi_k 
    =
    (\partial_t + 1)ik U_{{\rm sh}}' \Phi_k.
\end{equation}
We will shortly see that the operator on the left-hand side of \eqref{eq:Phik-compact} is crucial for our next analysis  (in particular, to define a new state variable $\psi_k$ in \eqref{eq:psik-not-compact}). We first outline, however, that, in the current form, equation \eqref{eq:Phik-compact} is still ineffective and does not predict the crucial Gevrey-3 regularity of our solutions. Indeed, we infer that a standard energy approach would provide (at best) an $L^2$-estimate of $\partial_y\Phi_k=u_k$ of the form 
\begin{equation}\label{est:just-analytic}
    \frac{1}{2}
    \frac{d}{dt}
    \Big(
        \| (\partial_t  +1 ) \partial_y \Phi_k \|_{L^2}^2 + \| \partial_t \partial_y \Phi_k \|_{L^2}^2 + 
        2\|  \partial_y^2 \Phi_k \|_{L^2}^2
    \Big)
    \leq 
    C|k|
    \Big(
        \| (\partial_t  +1 ) \partial_y \Phi_k \|_{L^2}^2 + \| \partial_t \partial_y \Phi_k \|_{L^2}^2
    \Big),
\end{equation}
for a suitable positive constant $C$, which is also calibrated with the following Poincar\'e-type inequality of $\Phi_k$ in the domain $y\in (0,1)$: 
\begin{equation}
\begin{aligned}
    \| \Phi_k(t) \|_{L^2} 
    &= 
    \bigg(
        \int_0^1 | \Phi_k(t,y)|^2dy
    \bigg)^\frac{1}{2}
    =
    \bigg(
        \int_0^1 \Big| \int_0^y\partial_y\Phi_k(t,z)dz\Big|^2dy
    \bigg)^\frac{1}{2}\\
    &\leq
     \bigg(
        \int_0^1 y \int_0^y\Big|\partial_y\Phi_k(t,z)\Big|^2dzdy
    \bigg)^\frac{1}{2}
    \leq
    \frac{1}{2}
     \bigg(
        \int_0^1 \Big|\partial_y\Phi_k(t,z)\Big|^2dz
    \bigg)^\frac{1}{2}
    \leq  \| \partial_y \Phi_k(t) \|_{L^2}.
\end{aligned}
\end{equation}
Hence, roughly speaking, in this regime the $L^2$-norms of 
$\partial_y\Phi_k$ and $\partial_t\partial_y\Phi_k$  would growth exponentially as 
$e^{c|k|}(\| \partial_y \Phi_{\rm in, k} \|_{L^2}+\| \partial_t \partial_y \Phi_{\rm in, k} \|_{L^2})  $, a setting which is typical of analytic solutions (which are of course much more regular than any Gevrey-class $m$, $m>1$).
To achieve the Gevrey regularity, we shall therefore perform a further development. To this end, we introduce a new state variable $\psi_k:[0,T_\sigma)\times (0,1)\to \mathbb R$.

\noindent
Following the approach used in \cite{MR3925144} for the classical Prandtl equation,  $\psi_k = \psi_k(t,y)$ is chosen in a form that gets rid of the terms 
$ik U_{\rm sh}'\partial_t \Phi_k$  and $ik U_{\rm sh}'  \Phi_k$ at the left-hand side of \eqref{eq:Phik-not-compact}.
More precisely, we define $\psi_k$ as the unique solution in  $L^\infty(0,T;H^2)\cap L^2 (0,T;H^2)$ of the following PDE:
\begin{equation}\label{eq:psik-not-compact}
    \system{
    \begin{alignedat}{4}
    &( (\partial_{t} + 1)(\partial_t  + ik U_{{\rm sh}} )  -\partial_y^2) \psi_k 
    = \Phi_k 
    \qquad 
    &&(0,T) \times (0,1),
    \\
    & \left. \pare{\psi_k , \partial_t \psi_{k}}\right|_{t=0} =\pare{0, 0}
    &&\hspace{1.4cm}(0,1),
    \\
    & \left. \psi_k \right|_{y=0,1}=0
    &&(0,T).
    \end{alignedat}
    }
\end{equation}
The most compelling reason for this definition is a meaningful cancellation that occurs when coupling \eqref{eq:psik-not-compact} together with \eqref{eq:Phik-compact}. More precisely, equation \eqref{eq:Phik-compact} implies that $\psi_k$ satisfies
\begin{equation*}
    ( (\partial_{t} + 1)(\partial_t  + ik U_{{\rm sh}} )  -\partial_y^2) \partial_y 
     ( (\partial_{t} + 1)(\partial_t  + ik U_{{\rm sh}} )  -\partial_y^2)\psi_k 
    =
    ik U_{{\rm sh}}' (\partial_t + 1)\Phi_k, 
\end{equation*}
which is 
\begin{equation}     \label{firstCommutator}
    \big( (\partial_{t} + 1)(\partial_t  + ik U_{{\rm sh}} )  -\partial_y^2\big)^2
    \partial_y
    \psi_k
    +
    \big( (\partial_{t} + 1)(\partial_t  + i k U_{{\rm sh}} )  -\partial_y^2\big)
    \big(
    ikU_{{\rm sh}}' 
    (\partial_t + 1)\psi_k
    \big)
    = ik U_{{\rm sh}}' (\partial_t + 1)\Phi_k.
\end{equation}
The second term at left-hand side of \eqref{firstCommutator} almost coincides with $ik U_{{\rm sh}}' (\partial_t + 1)\Phi_k$ at the right-hand side. 
To complete the aforementioned cancellation, we first invoke the commutator $ik[ U_{{\rm sh}}' , \partial_{y}^2]\psi_k = ik U_{{\rm sh}}'\partial_y^2(\psi_k)- ik\partial_y^2( U_{{\rm sh}}'\psi_k)$, in order to write the second term in \eqref{firstCommutator} as
\begin{equation*}
    \big( (\partial_{t} + 1)(\partial_t  + i k U_{{\rm sh}} )  -\partial_y^2\big)
    \big(
    ikU_{{\rm sh}}' 
    (\partial_t + 1)\psi_k
    \big)
    = 
    ikU_{{\rm sh}}' 
    \big( (\partial_{t} + 1)(\partial_t  + i k U_{{\rm sh}} )  -\partial_y^2\big)
    \big(
    (\partial_t + 1)\psi_k
    \big)
    -
    ik[ U_{{\rm sh}}' , \partial_{y}^2]\psi_k.
\end{equation*}
Finally, we plug this identity into \eqref{firstCommutator}, to gather
\begin{equation}\label{I-have-no-more-names}
\begin{aligned}
    \big( (\partial_{t} + 1)(\partial_t  + ik U_{{\rm sh}} )  -\partial_y^2\big)^2
    \partial_y
    \psi_k
    +
    ikU_{{\rm sh}}'
    (\partial_t +1)
    \Big(
    \underbrace{
        \big( (\partial_{t} + 1)(\partial_t  + ik U_{{\rm sh}} )  -\partial_y^2\big)
    \psi_k
    }_{= \Phi_k}
    \Big) + \\
    - [ik U_{{\rm sh}}'(y) , \partial_{y}^2]\big( (\partial_t + 1)\psi_k\big)
    = ik U_{{\rm sh}}' (\partial_t + 1)\Phi_k.
\end{aligned}
\end{equation}
Recalling that $\psi_k$ satisfies \eqref{eq:psik-not-compact}, we remark that $ik U_{{\rm sh}}' (\partial_t + 1)\Phi_k$ appears both on the left- and right-hand sides of \eqref{I-have-no-more-names}. We thus obtain the following final form of the $\psi_k$-equation:
\begin{equation}\label{eq:d_ypsik-final-form}
\begin{aligned}
     \big( (\partial_{t} + 1)(\partial_t  + ik U_{{\rm sh}} )  -\partial_y^2\big)^2
    \partial_y
    \psi_k
    = 
    ik
   [ U_{{\rm sh}}'  , \partial_{y}^2]\big( (\partial_t + 1)\psi_k\big),
\end{aligned}
\end{equation}
which was indeed claimed at the beginning in \eqref{eq:psik-sketch-of-the-proof}. We shall remark that \eqref{eq:d_ypsik-final-form} still presents a forcing term $ ik
   [ U_{{\rm sh}}'  , \partial_{y}^2]\big( (\partial_t + 1)\psi_k\big)$, which growths linearly like $|k|$ at high frequencies $|k|\gg 1$ (similarly as $ikU_{\rm sh}'(\partial_t+1)\Phi_k$ in \eqref{eq:Phik-compact}). Nonetheless,  the operator $  ( (\partial_{t} + 1)(\partial_t  + ik U_{{\rm sh}} )  -\partial_y^2 )^2$ on $\partial_y
    \psi_k$ has now doubled in order (in comparison with just $  ( (\partial_{t} + 1)(\partial_t  + ik U_{{\rm sh}} )  -\partial_y^2 )$ in \eqref{eq:Phik-compact}). This will unlock more-refined estimates on $\partial_y \psi_k$ (and its derivatives) than the ones in  \eqref{est:just-analytic} for $\partial_y\Phi_k$. 
    We formalise these heuristics in the next sections.

\subsection{Reaching Gevrey-class 3}\label{sec:reaching-Gevrey-3}
In the forthcoming analysis, we illustrate how the derived equation \eqref{eq:d_ypsik-final-form} succeeds in enabling an $L^2$-estimate of Gevrey-3 type to the new state variable $\partial_y \psi_k$, as well its time derivative $\partial_t \partial_y \psi_k$. This estimate is a consequence of the following statement, that we set as the basis of our development. It guarantees that the functions $(\partial_t +1)\partial_y \psi_k$ and $\partial_y^2 \psi_k$ satisfy an improved Gronwall-type inequality, as described by Lemma \ref{lemma:Gronwall-improved}.

\begin{prop}\label{prop:estimate-of-dtdypsik}
    The following estimate on the functions $(\partial_t + 1)\partial_y \psi_k$ and $\partial_y^2 \psi_k$ holds true, for any frequency $k\in \mathbb{Z}$ and at any time $t \in (0,T_\sigma)$:
    \begin{equation}\label{main-est:prop-dtdypsik}
    \begin{aligned}
        \sup_{s\in [0,t]}
        \Big\{
            \big\| (\partial_t &+1)\partial_y \psi_k(s) \big\|_{L^2}+
            \big\|  \partial_y^2 \psi_k(s) \big\|_{L^2}
        \Big\}\\
        &\leq 
        g_k(t)
        +
        \frac{\lambda_k(t)^3}{2}
        \int_0^t 
        (t-s)^2
        \sup_{\tau \in [0,s]}
         \Big\{
            \big\| (\partial_t +1)\partial_y \psi_k(\tau) \big\|_{L^2}+
            \big\|  \partial_y^2 \psi_k(\tau) \big\|_{L^2}
        \Big\}ds.
    \end{aligned}
    \end{equation}
    The functions $g_k$, $\lambda_k$ are increasing in time and depend uniquely on $k\in \mathbb Z$, the shear flow $U_{\rm sh}$ and the initial data $(u_{{\rm in}, k}, u_{t,{\rm in}, k}, \Phi_{{\rm in}, k})$. More precisely, $g_k$, $\lambda_k$ are defined by 
    \begin{equation}\label{def:gk-lambdak}
    \begin{aligned}
        g_k(t) 
        &:= 
        4t 
        \Big\{
            |k|
            \Big(
                \|U_{\rm sh}' \|_{L^\infty}\| \Phi_{\rm in, k}  \|_{L^2} + 
                \|U_{\rm sh}  \|_{L^\infty}\| u_{\rm in, k}     \|_{L^2}
            \Big)
            +
            \| u_{t,\rm in, k}     \|_{L^2}
            +
            (3+\sqrt{2})
            \| u_{\rm in, k}     \|_{L^2}
        \Big\}
        \\
        \lambda_k(t)
        &:= 2^{\frac{5}{6}}|k|^\frac{1}{3}
        \Big(
                \| U_{\rm sh}'''\|_{L^\infty}
                +
                2
                \| U_{\rm sh}'' \|_{L^\infty}
        \Big)^\frac{1}{3}
        e^{\frac{t}{3}}.
    \end{aligned}
    \end{equation}
    for $k\in \mathbb{Z}$ and $t \in (0,T_\sigma)$.
\end{prop}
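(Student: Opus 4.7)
The starting point is equation \eqref{eq:d_ypsik-final-form}, namely $L^2\partial_y\psi_k = G_k$ with $L := (\partial_t+1)(\partial_t+ikU_{\rm sh})-\partial_y^2$ and source
\[
    G_k \;=\; ik\bigl[U'_{\rm sh},\partial_y^2\bigr]\bigl((\partial_t+1)\psi_k\bigr) \;=\; -ikU'''_{\rm sh}(\partial_t+1)\psi_k \;-\; 2ikU''_{\rm sh}(\partial_t+1)\partial_y\psi_k,
\]
which depends linearly on $\psi_k$ and $\partial_y\psi_k$ with a prefactor $|k|$. My plan is to treat $L^2$ as the composition of two damped-wave operators and perform two successive energy estimates, one for each copy of $L$. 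Each application of $L^{-1}$ produces one Duhamel-type integration in time, so chaining them together will produce a kernel of order $(t-s)^2$, which is exactly the structure required by the weighted Gronwall inequality of \Cref{lemma:Gronwall-improved}. The extra factor of $(t-s)$ beyond the single Duhamel integration comes from the fact that the natural energy quantities are $(\partial_t+1)\phi$ and $\partial_y\phi$, so that recovering $\phi$ itself from $(\partial_t+1)\phi$ costs one more time integration via the exponential representation $\phi(t) = e^{-t}\phi(0)+\int_0^t e^{-(t-s)}(\partial_t+1)\phi(s)\,ds$; the same representation will eventually produce the $e^{t/3}$ factor appearing in $\lambda_k$.

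For a single copy of $L$ I would use, as developed in \Cref{sec:test-function}, the multiplier $\overline{(\partial_t+1)\phi}$: multiplying $L\phi=f$, taking real parts, and noting that the purely imaginary advection $ikU_{\rm sh}$ drops (since $k$ is real and $U_{\rm sh}$ is real), the identity
\[
    \frac{1}{2}\frac{d}{dt}\Bigl(\|(\partial_t+1)\phi\|_{L^2}^2+\|\partial_y\phi\|_{L^2}^2\Bigr) + \|(\partial_t+1)\phi\|_{L^2}^2 + \|\partial_y\phi\|_{L^2}^2 \;=\; \mathrm{Re}\,\langle f,(\partial_t+1)\phi\rangle
\]
follows. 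Multiplying by $e^{2t}$ and absorbing the linear drag on the left yields an integrated control of $\|(\partial_t+1)\phi(t)\|_{L^2}+\|\partial_y\phi(t)\|_{L^2}$ by its initial value plus $\int_0^t e^{-(t-s)}\|f(s)\|_{L^2}\,ds$. I would then apply this principle twice: first to the intermediate variable $w_k := L(\partial_y\psi_k)$, which solves $Lw_k=G_k$, to bound $\sup_{s\le t}\bigl(\|(\partial_t+1)w_k\|_{L^2}+\|\partial_y w_k\|_{L^2}\bigr)$ in terms of $\int_0^t\|G_k\|_{L^2}\,ds$, and second to $L(\partial_y\psi_k) = w_k$ to bound the quantity $Y(s) := \|(\partial_t+1)\partial_y\psi_k(s)\|_{L^2}+\|\partial_y^2\psi_k(s)\|_{L^2}$ of interest. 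Composing the two Duhamel integrations, together with the additional $(t-s)$ factor incurred when passing from $(\partial_t+1)w_k$ back to $w_k$, produces exactly the $(t-s)^2$ kernel in \eqref{main-est:prop-dtdypsik}.

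The initial values feeding into $g_k$ follow from $\psi_k(0)=\partial_t\psi_k(0)=0$ combined with $L\psi_k=\Phi_k$, forcing $\partial_t^2\psi_k(0)=\Phi_{\rm in,k}$ and hence $w_k(0)=u_{\rm in,k}$, $\partial_t w_k(0)\sim u_{t,\rm in,k}+u_{\rm in,k}$, giving the explicit form stated in \eqref{def:gk-lambdak}. To close the iteration I would bound the source by
\[
    \|G_k(s)\|_{L^2} \;\leq\; |k|\bigl(\|U'''_{\rm sh}\|_\infty + 2\|U''_{\rm sh}\|_\infty\bigr)\,\|(\partial_t+1)\partial_y\psi_k(s)\|_{L^2},
\]
using the Poincar\'e reduction $\|(\partial_t+1)\psi_k\|_{L^2}\le\|\partial_y(\partial_t+1)\psi_k\|_{L^2}$, which is legitimate because $\psi_k|_{y=0,1}=0$; the right-hand side is then controlled by $|k|(\|U'''_{\rm sh}\|_\infty+2\|U''_{\rm sh}\|_\infty)\,Y(s)$, which is self-referential on the quantity being estimated. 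Extracting $\lambda_k$ as the cube root of the coefficient multiplying the $(t-s)^2$ kernel absorbs the factor $|k|$ into $|k|^{1/3}$---the signature Gevrey-$3$ scaling---together with the drag exponential $e^{t/3}$ and the numerical factor $2^{5/6}$. The main obstacle I anticipate is the second energy application: since $\partial_y\psi_k$ does \emph{not} vanish at $y=0,1$, the naive multiplier $\overline{(\partial_t+1)\partial_y\psi_k}$ produces non-trivial boundary terms when integrating $-\partial_y^2$ by parts, and absorbing or controlling these is precisely why a carefully designed test function beyond the naive multiplier is needed---this is where I expect the bulk of the technical work of \Cref{sec:test-function} and \Cref{sec:the-main-estimate} to lie.
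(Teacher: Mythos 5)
Your two-step energy-estimate heuristic is sound, and in fact the forward scheme you propose can be carried to completion: the first application of $L$ bounds $\sup_{s\le t}\|(\partial_t+1)w_k(s)\|_{L^2}$ by the initial data plus $\int_0^t\|G_k\|_{L^2}$; the recovery $w_k(s)=e^{-s}w_k(0)+\int_0^s e^{-(s-\tau)}(\partial_t+1)w_k(\tau)\,d\tau$ costs one factor of $(s-r)$; the second application integrates once more; and exchanging the order of integration in $\int_0^t\int_0^s(s-r)\|G_k(r)\|\,dr\,ds$ produces exactly the $\tfrac{(t-r)^2}{2}$ kernel, while $\|G_k(r)\|_{L^2}\le |k|(\|U'''_{\rm sh}\|_\infty+2\|U''_{\rm sh}\|_\infty)\|(\partial_t+1)\partial_y\psi_k(r)\|_{L^2}$ closes the Gronwall loop. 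However, you mis-diagnose the ``main obstacle'': the boundary term $\mathfrak{Re}\bigl[\partial_y^2\psi_k\,\overline{(\partial_t+1)\partial_y\psi_k}\bigr]_{y=0}^{y=1}$ actually vanishes. Indeed, evaluating the defining relation $\bigl((\partial_t+1)(\partial_t+ikU_{\rm sh})-\partial_y^2\bigr)\psi_k=\Phi_k$ at $y=0,1$ and using $\psi_k|_{y=0,1}=0$ together with $\Phi_k|_{y=0,1}=0$ yields $\partial_y^2\psi_k|_{y=0,1}=0$. This hidden boundary condition is needed by the paper as well (it is used silently when the paper converts $-\mathfrak{Re}\int\partial_y^3\psi_k\,\overline{(\partial_t+1)\partial_y\psi_k}$ into $\tfrac12\|\partial_y^2\psi_k(\tau)\|^2+\int_0^\tau\|\partial_y^2\psi_k\|^2$), and it is exactly what resolves the issue you flag.

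The paper's route is, nonetheless, genuinely different from what you propose. Rather than two forward energy estimates, it solves a \emph{backward-in-time} dual problem: $\omega_{\tau,k}$ satisfies $\bigl((\partial_t-1)(\partial_t+ikU_{\rm sh})-\partial_y^2\bigr)\omega_{\tau,k}=(\partial_t+1)\partial_y\psi_k$ on $(0,\tau)$ with $\omega_{\tau,k}(\tau)=\partial_t\omega_{\tau,k}(\tau)=0$ and $\omega_{\tau,k}|_{y=0,1}=0$. Pairing $L^2\partial_y\psi_k=G_k$ against $\overline{\omega_{\tau,k}}$ and integrating by parts once transfers the inner copy of $L$ onto the test function (using $\omega_{\tau,k}|_{y=0,1}=0$ and $f_k|_{y=0,1}=0$ for the spatial parts), producing the coercive identity $\int_0^\tau\int_0^1 L\partial_y\psi_k\,\overline{(\partial_t+1)\partial_y\psi_k}$ plus explicit initial-data contributions. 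The $(t-s)^2$ kernel then emerges from \Cref{lemma:test-function} through $\|\omega_{\tau,k}(t)\|_{L^2}\lesssim e^{\tau-t}\int_t^\tau(s-t)\|(\partial_t+1)\partial_y\psi_k(s)\|_{L^2}\,ds$, with the exponential arising from Gronwall on the \emph{anti-damped} backward problem. Your forward scheme avoids the dual problem entirely and, since the forward damping $(\partial_t+1)$ works in your favor, would not generate the $e^t$ in $\lambda_k^3$ at all; the paper's duality argument, in turn, packages the boundary bookkeeping and the initial data at $t=0$ in a single test-function lemma. Both are valid once $\partial_y^2\psi_k|_{y=0,1}=0$ is observed.
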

\noindent 
Since the proof of this Proposition is rather technical, we postpone it to Section \ref{sec:test-function} and we focus the next paragraphs on the remaining steps to prove Theorem \ref{main-thm}.

\smallskip
\noindent
Let us observe that inequality \eqref{main-est:prop-dtdypsik} encompasses the form \eqref{ineq1-gronwall-type-lemma} of Lemma \ref{lemma:Gronwall-improved}. This ensures therefore that 
the following improved Gronwall's inequality holds true for any $t\in [0,T_\sigma)$:
\begin{align*}
    \big\| (\partial_t +1)\partial_y \psi_k(t) \big\|_{L^2}
    +
    \big\| \partial_y^2 \psi_k(t) \big\|_{L^2}
    \leq 
    \sup_{s\in [0,t]}
    &\big\| (\partial_t +1)\partial_y \psi_k(s) \big\|_{L^2}
    \leq 
    g_k(t) 
    \left(
        1+ 
        \frac{(\lambda_k(t)t)^3}{6}
    \right)
    e^{\lambda_k(t) t}.
\end{align*}
We have essentially achieved the claimed regularity of Gevrey $3$, at least for $(\partial_t +1)\partial_y \psi_k$ and $\partial_y^2\psi_k$, since the definitions of $g_k$ and $\lambda_k$ in \eqref{def:gk-lambdak} imply that
\begin{equation}\label{est:(dt+1)dypsik-first-est-with-Gevrey-3}
\begin{aligned}
    \sup_{s\in [0,t]}
    \Big\{
        \big\| (\partial_t \!+\!1)\partial_y \psi_k(s) \big\|_{L^2}
        \!+\!
        \big\| \partial_y^2 \psi_k(s) \big\|_{L^2}\!
    \Big\}
    \!\leq\! 
    4t 
        \bigg\{\!
            |k|
            \Big(
                \|U_{\rm sh}' \|_{L^\infty}\| \Phi_{\rm in, k}  \|_{L^2} + 
                \|U_{\rm sh}  \|_{L^\infty}\| u_{\rm in, k}     \|_{L^2}
            \Big)
            \!+\!
            \| u_{t,\rm in, k}     \|_{L^2}
            \!+
            \\
            +(3\!+\!\sqrt{2})
            \| u_{\rm in, k}     \|_{L^2}\!
        \bigg\}
        \bigg(
        \!1\!+\! 
        \frac{2\sqrt{2}|k|\Big(
        \| U_{{\rm sh}}''' \|_{L^\infty}
        +
        2\| U_{{\rm sh}}'' \|_{L^\infty}
        \Big)e^t t^3}{3}
        \bigg)
        \exp
        \bigg\{
        |k|^\frac{1}{3}2^\frac{5}{6}\Big(
        \| U_{{\rm sh}}''' \|_{L^\infty}
        \!+\!
        2\| U_{{\rm sh}}'' \|_{L^\infty}
        \Big)^\frac{1}{3}e^\frac{t}{3}t
        \bigg\}.
\end{aligned}
\end{equation}
In particular, the $L^2$-norm increases exponentially at worst as  $|k|^{1/3}$, which we can counteract (at least locally in time), by imposing that the initial data exponentially decay with the same order. To formalise this principle, we shall however first transfer all frequencies $|k|$ of \eqref{est:(dt+1)dypsik-first-est-with-Gevrey-3} to the exponential function, as well as transfer these Gevrey-3 estimates also to $\psi_k, \partial_y \psi_k$ and $\partial_t \partial_y \psi_k$ (instead of just $(\partial_t+1) \partial_y \psi_k$ and $ \partial_y^2 \psi_k$). 
We cope with this issue in the following lemma.
\begin{lemma}\label{lemma:Gevrey3-psi}
    Assume that the sequence of initial data $(u_{{\rm in}, k}, u_{t,{\rm in}, k}, \Phi_{{\rm in}, k})_{k\in \mathbb Z}$ satisfies 
    \begin{equation}
        \sup_{k\in \mathbb Z}
        \Big\{
        e^{\sigma|k|^\frac{1}{3}}
        \Big(
        \|  u_{{\rm in}, k}       \|_{L^2}
        +
        \| u_{t,{\rm in}, k} \|_{L^2}
        +
        \| \Phi_{{\rm in}, k} \|_{L^2}
        \Big)
    \Big\}<+\infty,
    \end{equation}
    for a given radius $\sigma>0$. Let $\alpha:[0,T_\sigma)\to \mathbb R_+$ be defined by
    \begin{equation}\label{def:alpha-radius}
        \alpha(t) := \frac{\sigma}{2}
         -
        2^\frac{5}{6}
        \Big(
        \| U_{{\rm sh}}''' \|_{L^\infty}
        +
        2\| U_{{\rm sh}}'' \|_{L^\infty}
        \Big)^\frac{1}{3}e^\frac{t}{3}t\geq 0.
    \end{equation}
    Then the sequence $(\psi_k)_{k\in \mathbb N}$ generates a function $\psi: [0,T_\sigma)\times \mathbb T\times (0,1)\to \mathbb R$ through the Fourier series
    \begin{equation}\label{def-psi-through-Fourier-Series}
        \psi(t,x,y) = \sum_{k\in \mathbb Z} \psi_k(t,y)e^{ik x},\qquad 
        (t,x,y)\in [0,T_\sigma)\times \mathbb T \times (0,1)
    \end{equation}
    such that $\psi, \partial_y \psi$ and $\partial_t \psi$ are all in  $L^\infty(0, T_\delta;\mathcal{G}^3_{\alpha(t),x} H^1_{0,y})$. 
    In particular, the following estimate on the norms holds true 
    at any time $t\in [0,T_\sigma)$:
    \begin{equation}\label{estimate:main-lemma:dypsik-Gevrey-3}
    \begin{aligned}
    &\| \psi                     (t)    \|_{\mathcal{G}^3_{\alpha(t),x} H^1_{0,y}}
    \!\!+\!
    \| \partial_y \psi          (t)    \|_{\mathcal{G}^3_{\alpha(t),x} H^1_{0,y}}
    \!\!+\!
    \| \partial^2_{ty}\psi(t)    \|_{\mathcal{G}^3_{\alpha(t),x} H^1_{0,y}}
    \!\!+\!
    \| \partial_y^2 \psi        (t)    \|_{\mathcal{G}^3_{\alpha(t),x} H^1_{0,y}}
    \!\!=\\
    &\sup_{k\in \mathbb Z}
    \!
    \Big\{
    \! 
     e^{
         \alpha(t)|k|^\frac{1}{3}
     }
     \! \big\| \psi_k(t)\! \big\|_{L^2}
    \! \Big\}
    \! +\!
    \sup_{k\in \mathbb Z}
    \!
    \Big\{
    \! 
    e^{
        \alpha(t)|k|^\frac{1}{3}
    }
    \! 
    \big\| \partial_y \psi_k(t)\! \big\|_{L^2}
    \! 
    \Big\}
    \! +\!
    \sup_{k\in \mathbb Z}
    \!
    \Big\{
    \! 
    e^{
        \alpha(t)|k|^\frac{1}{3}
    }
    \! 
    \big\| 
        \partial^2_{ty} \psi_k(t) 
        \!
    \big\|_{L^2}
    \! 
    \Big\}
    \! +\!
    \sup_{k\in \mathbb Z}
    \!
    \Big\{
    \! 
    e^{
        \alpha(t)|k|^\frac{1}{3}
    }
    \! 
    \big\| \partial_y^2 \psi_k(t)\! \big\|_{L^2}
    \! 
    \Big\}\\
    &\leq 
    C_\sigma(t)
    \Big(
        \!
        1
        \!+\! 
        \| U_{{\rm sh}} \|_{L^\infty}
        \!+\!
        \| U_{{\rm sh}}' \|_{L^\infty}
        \!+\!
        \| U_{{\rm sh}}'' \|_{L^\infty}
        \!+\! 
        \| U_{{\rm sh}}''' \|_{L^\infty}
        \!
    \Big)^2
    \!\!
    \sup_{k\in \mathbb Z}
    \Big\{
        e^{\sigma|k|^\frac{1}{3}}
        \Big(
        \| \Phi_{{\rm in}, k} \|_{L^2}
        \!+\!
        \|  u_{{\rm in}, k}       \|_{L^2}
        \!+\!
        \| u_{t,{\rm in}, k} \|_{L^2}
        \Big)
    \Big\},
    \end{aligned}
    \end{equation}
    where $C_\sigma(t) = 170\cdot \max
    \{
        1,12/{\sigma}
    \}^6t(1+t)^3e^{t}$.
\end{lemma}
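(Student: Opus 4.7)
The plan is to combine Proposition~\ref{prop:estimate-of-dtdypsik} with the weighted Gronwall inequality of Lemma~\ref{lemma:Gronwall-improved} to obtain a Gevrey-$3$ bound at each frequency on the pair $(\partial_t+1)\partial_y\psi_k$ and $\partial_y^2\psi_k$, and then to transfer it to $\psi_k$, $\partial_y\psi_k$ and $\partial_{ty}^2\psi_k$ by means of the Poincaré inequality and an algebraic identity linking $\lambda_k$ to $\alpha$.

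First, I would apply Lemma~\ref{lemma:Gronwall-improved} to \eqref{main-est:prop-dtdypsik} with $f(s)=\sup_{\tau\leq s}\bigl(\|(\partial_t+1)\partial_y\psi_k(\tau)\|_{L^2}+\|\partial_y^2\psi_k(\tau)\|_{L^2}\bigr)$, $g=g_k$ and $\lambda=\lambda_k$; this reproduces the estimate already anticipated in \eqref{est:(dt+1)dypsik-first-est-with-Gevrey-3}. Reading off the definitions \eqref{def:gk-lambdak}, the right-hand side is controlled, up to absolute constants and the factor $(1+\|U_{\rm sh}\|_{L^\infty}+\|U_{\rm sh}'\|_{L^\infty}+\|U_{\rm sh}''\|_{L^\infty}+\|U_{\rm sh}'''\|_{L^\infty})^{2}$, by
\[
    |k|^{2}\,t(1+t^{3})\,e^{t}\,e^{\lambda_k(t)t}\bigl(\|u_{{\rm in},k}\|_{L^2}+\|u_{t,{\rm in},k}\|_{L^2}+\|\Phi_{{\rm in},k}\|_{L^2}\bigr).
\]

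Second, I would recover the three remaining quantities. Since $\psi_k$ satisfies homogeneous Dirichlet conditions in $y$, the Poincaré inequality yields $\|\psi_k\|_{L^2}\leq\pi^{-1}\|\partial_y\psi_k\|_{L^2}$; because $\int_0^1\partial_y\psi_k\,dy=\psi_k(1)-\psi_k(0)=0$, the Poincaré–Wirtinger inequality gives in turn $\|\partial_y\psi_k\|_{L^2}\leq\pi^{-1}\|\partial_y^2\psi_k\|_{L^2}$. Finally, the identity $\partial^2_{ty}\psi_k=(\partial_t+1)\partial_y\psi_k-\partial_y\psi_k$ bounds the mixed derivative in terms of the two quantities already estimated. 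In this way each of the four $L^2$-norms on the right-hand side of \eqref{estimate:main-lemma:dypsik-Gevrey-3} is controlled, uniformly in $k$, by the first-step bound.

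Third, I would absorb the polynomial factor $|k|^{2}$ into the Gevrey exponential. Comparing \eqref{def:gk-lambdak} with \eqref{def:alpha-radius} yields the key algebraic identity
\[
    \lambda_k(t)t=|k|^{1/3}\bigl(\tfrac{\sigma}{2}-\alpha(t)\bigr),\qquad \text{so that}\qquad e^{\alpha(t)|k|^{1/3}}\,e^{\lambda_k(t)t}=e^{\sigma|k|^{1/3}/2}.
\]
Multiplying the first-step bound by $e^{\alpha(t)|k|^{1/3}}$ and estimating $\|u_{{\rm in},k}\|_{L^2}\leq e^{-\sigma|k|^{1/3}}\sup_{j}e^{\sigma|j|^{1/3}}\|u_{{\rm in},j}\|_{L^2}$ (and analogously for the other data) leaves the factor $|k|^{2}e^{-\sigma|k|^{1/3}/2}$. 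Setting $y=|k|^{1/3}$ and using the elementary bound $\sup_{y\geq 0}y^{6}e^{-\sigma y/2}\leq(12/\sigma)^{6}$ absorbs this into the prefactor $\max\{1,12/\sigma\}^{6}$ of $C_\sigma(t)$, and the remaining $t(1+t^{3})e^{t}$ fits into $t(1+t)^{3}e^{t}$.

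Finally, the resulting uniform-in-$k$ decay $\|\psi_k(t)\|_{L^2}\lesssim e^{-\alpha(t)|k|^{1/3}}$ (and its three companions) guarantees that the series \eqref{def-psi-through-Fourier-Series} converges absolutely in $L^2_y$ for every $(t,x)$ and defines the claimed function $\psi$; taking the supremum in $k$ of the four per-frequency estimates yields \eqref{estimate:main-lemma:dypsik-Gevrey-3}. The hard part will be the bookkeeping of the third step: the stated constant $C_\sigma(t)=170\max\{1,12/\sigma\}^{6}t(1+t)^{3}e^{t}$ leaves little slack, and the choice $\alpha(0)=\sigma/2$ (rather than $\sigma$) is dictated precisely by the size of the exponential margin needed to swallow the polynomial $|k|^{2}$ factor produced by $g_k(t)\bigl(1+(\lambda_k(t)t)^{3}/6\bigr)$.
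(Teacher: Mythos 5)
Your proposal is correct and follows essentially the same route as the paper: apply the weighted Gronwall Lemma~\ref{lemma:Gronwall-improved} to Proposition~\ref{prop:estimate-of-dtdypsik}, exploit the identity $\lambda_k(t)t=|k|^{1/3}(\sigma/2-\alpha(t))$ (so $e^{\alpha(t)|k|^{1/3}}e^{\lambda_k(t)t}=e^{\sigma|k|^{1/3}/2}$) to absorb the $|k|^2$ factor into the exponential margin, and then propagate to $\psi_k$, $\partial_y\psi_k$ and $\partial^2_{ty}\psi_k$. The single local departure is your use of the Poincar\'e--Wirtinger inequality $\|\partial_y\psi_k\|_{L^2}\lesssim\|\partial_y^2\psi_k\|_{L^2}$ (valid since $\partial_y\psi_k$ has zero mean, as $\psi_k$ vanishes at $y=0,1$), whereas the paper instead exploits the zero initial condition $\partial_y\psi_k|_{t=0}=0$ to write $\partial_y\psi_k(t)=\int_0^t e^{s-t}(\partial_t+1)\partial_y\psi_k(s)\,ds$; both are valid and give the same conclusion.
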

\begin{proof}
If the sequence $(\psi_k)_{k\in \mathbb{Z}}$ and its derivatives satisfy the corresponding inequality \eqref{estimate:main-lemma:dypsik-Gevrey-3}, then it generates trivially a function $\psi :[0, T_\delta) \times \mathbb T \times (0,1)\to \mathbb R$ as described in \eqref{def-psi-through-Fourier-Series} , since the series converges strongly in $L^\infty(0,T_\sigma;L^2(\mathbb T\times (0,1))$ and the limit has explicit Fourier coefficients given by $(\psi_k)_{k\in \mathbb{Z}}$. 
Our main objective is therefore to prove uniquely the inequality \eqref{estimate:main-lemma:dypsik-Gevrey-3} in the coefficient $(\psi_k)_{k\in \mathbb{Z}}$.

\noindent
We first show that the function $(\partial_t +1 )\partial_y \psi_k$ and $\partial_y^2 \psi_k$ satisfies a similar inequality, namely
\begin{equation}\label{estimate:main-lemma:dtdypsik+dypsik}
    \begin{aligned}
    e^{
        \alpha(t)|k|^\frac{1}{3}
    }
    &\Big(
        \big\| (\partial_t+1)\partial_y \psi_k(t) \big\|_{L^2}
    +
        \big\| \partial_y^2 \psi_k(t) \big\|_{L^2}
    \Big)
    \leq 
    \frac{C_\sigma(t)}{5}
    \Big(
        1+ 
        \| U_{{\rm sh}} \|_{L^\infty}
        +\\
        &+
        \| U_{{\rm sh}}' \|_{L^\infty}
        +
        \| U_{{\rm sh}}'' \|_{L^\infty}
        + 
        \| U_{{\rm sh}}''' \|_{L^\infty}
    \Big)^2
    \sup_{\tilde k\in \mathbb Z}
    \Big\{
        e^{\sigma|\tilde k|^\frac{1}{3}}
        \Big(
        \| \Phi_{{\rm in}, \tilde k} \|_{L^2}
        \!+\!
        \|  u_{{\rm in}, \tilde k}       \|_{L^2}
        \!+\!
        \| u_{t,{\rm in}, \tilde k} \|_{L^2}
        \Big)
    \Big\},
   \end{aligned}
\end{equation}
for any $k\in \mathbb Z$ and $t\in [0, T_\sigma)$. When $k = 0$, then \eqref{estimate:main-lemma:dtdypsik+dypsik} is essentially a direct consequence of \eqref{est:(dt+1)dypsik-first-est-with-Gevrey-3}, which implies in particular  
\begin{equation*}
       \big\| (\partial_t+1)\partial_y \psi_0(t) \big\|_{L^2}
    +
        \big\| \partial_y^2 \psi_0(t) \big\|_{L^2}
        \leq 
        4t
        \Big\{ 
        \|  u_{{\rm in}, 0}       \|_{L^2}+ 
        \| u_{t,{\rm in}, 0} \|_{L^2}
        \Big\}.
\end{equation*}
We turn our attention therefore to $|k|\geq 1$, so that \eqref{est:(dt+1)dypsik-first-est-with-Gevrey-3} yields
\begin{align*}
    \sup_{s\in [0,t]}
    \Big\{
        \big\| (\partial_t\! +\!1)\partial_y \psi_k(s) \big\|_{L^2}
        \!+\!
        \big\| \partial_y^2 \psi_k(s)             \big\|_{L^2}
    \Big\}
    \leq 
    4t(3+\sqrt{2})
    \Big(
            1 \!+\! \| U_{{\rm sh}}' \|_{L^\infty} \!+\! \| U_{{\rm sh}} \|_{L^\infty}
    \Big)
        \Big(
        \| \Phi_{{\rm in}, k} \|_{L^2}
        \!+\!
        \|  u_{{\rm in}, k}       \|_{L^2}
        \!+\\
        +
        \| u_{t,{\rm in}, k} \|_{L^2}
        \Big)
        \frac{4\sqrt{2}}{3}
        e^t(1+t)^3
        \Big(
            1
            +
            \| U_{{\rm sh}}''' \|_{L^\infty}
            +
            \| U_{{\rm sh}}'' \|_{L^\infty}
        \Big)
        |k|^2
        \exp
        \bigg\{2^\frac{5}{6}|k|^\frac{1}{3}\Big(
        \| U_{{\rm sh}}''' \|_{L^\infty}
        +
        2\| U_{{\rm sh}}'' \|_{L^\infty}
        \Big)^\frac{1}{3}e^\frac{t}{3}t\bigg\}.
\end{align*}
Hence, we collect all terms of the shear flow $U_{\rm sh}$ within a single parenthesis, we extrapolate $e^{\sigma |k|^{1/3}}$ in front of the initial data and we choose $s = t$ on the supremum at the left-hand side
\begin{align*}
    \big\| (\partial_t +1)\partial_y \psi_k(t) \big\|_{L^2}&+
    \big\| \partial_y^2 \psi_k(t)             \big\|_{L^2}
    \leq 
    34t(1+t)^3e^{t}
    \big(
        1+ 
        \| U_{{\rm sh}} \|_{L^\infty}+ 
        \| U_{{\rm sh}}' \|_{L^\infty}+ 
        \\
        +
        \| U_{{\rm sh}}'' \|_{L^\infty}
        &+
        \| U_{{\rm sh}}''' \|_{L^\infty}
    \big)^2
    \bigg(
    \sup_{\tilde k\in \mathbb Z}
    \Big\{
        e^{\sigma|\tilde k|^\frac{1}{3}}
        \Big(
        \| \Phi_{{\rm in}, \tilde k} \|_{L^2}
        \!+\!
        \|  u_{{\rm in}, \tilde k}       \|_{L^2}
        +
        \| u_{t,{\rm in}, \tilde k} \|_{L^2}
        \Big)\!
    \Big\}
    \bigg)
    \times \\ 
    &\times 
    |k|^2
    \exp
    \Big\{
        \!\!-\sigma|k|^\frac{1}{3}\!+\!
        2^\frac{1}{3}|k|^\frac{1}{3}
        \Big(
        \| U_{{\rm sh}}''' \|_{L^\infty}
        \!+\!
        2\| U_{{\rm sh}}'' \|_{L^\infty}
        \!\Big)^\frac{1}{3}
        \!e^\frac{t}{3}t
    \Big\}.
\end{align*}
Since one has $a\leq e^{a}$, for any positive real number $a>0$, we can bound the term $|k|^2$, by means of
\begin{equation*}
    |k|^2 = \big(|k|^{\frac{1}{3}}\big)^6
    = \Big(\frac{12}{\sigma}\Big)^6
    \Big(\frac{\sigma}{12}|k|^{\frac{1}{3}}\Big)^6
    \leq 
    \Big(\frac{12}{\sigma}\Big)^6
    \Big(e^{\frac{\sigma}{12}|k|^{\frac{1}{3}}}\Big)^6
    = 
    \Big(\frac{12}{\sigma}\Big)^6
    e^{\frac{\sigma}{2}|k|^{\frac{1}{3}}}.
\end{equation*}
Hence, remarking that $ 34(12/\sigma)^6 t(1+t)^3e^t \leq C_\sigma(t)/5$, we deduce that
\begin{align*}
    &\big\| (\partial_t +1)\partial_y \psi_k(t) \big\|_{L^2} +
    \big\| \partial_y^2 \psi_k(t)             \big\|_{L^2}
    \leq 
    \frac{C_\sigma(t)}{4}
    \big(
        1+ 
        \| U_{{\rm sh}} \|_{L^\infty}+ 
        \| U_{{\rm sh}}' \|_{L^\infty}+ 
        \| U_{{\rm sh}}'' \|_{L^\infty}
        +
        \| U_{{\rm sh}}''' \|_{L^\infty}
    \big)^2\times \\
    &\times 
    \bigg(\!
    \sup_{\tilde k\in \mathbb Z}
    \Big\{
        e^{\sigma|\tilde k|^\frac{1}{3}}
        \Big(
        \| \Phi_{{\rm in}, \tilde k} \|_{L^2}
        +
        \|  u_{{\rm in}, \tilde k}       \|_{L^2}
        +
        \| u_{t,{\rm in}, \tilde k} \|_{L^2}
        \Big)
    \Big\}\!
    \bigg)
    \exp
    \Big\{
        \!-\!
        \underbrace{\Big(
        \frac{\sigma}{2}
        \!-\!
        2^\frac{1}{3}
        \Big(
        \| U_{{\rm sh}}''' \|_{L^\infty}
        \!+\!
        2\| U_{{\rm sh}}'' \|_{L^\infty}
        \!\Big)^\frac{1}{3}
        \!e^\frac{t}{3}t
        \Big)}_{= \alpha(t)}
        |k|^\frac{1}{3}
    \!\Big\},
\end{align*}
which coincides with \eqref{estimate:main-lemma:dtdypsik+dypsik}.
We now transfer the estimate \eqref{estimate:main-lemma:dtdypsik+dypsik} directly to the functions $\psi_k$, $\partial_y \psi_k$ and $\partial_t\partial_y\psi_k$ in \eqref{estimate:main-lemma:dypsik-Gevrey-3}. We begin with by developing $\partial_y \psi_k$ through
\begin{equation*}
    \partial_y \psi_k(t,y) = e^{-t}e^{t} \partial_y \psi_k(t,y) = 
    e^{-t} \int_0^t \partial_s (e^{s} \partial_y \psi_k(s,y))ds = 
     \int_0^t e^{s-t}\big[(\partial_t +1)\partial_y \psi_k\big](s,y) ds,
\end{equation*}
for any $(t,y)\in (0,T_\sigma)\times (0,1)$. Hence, by taking the $L^2$-norm on both left and right-hand sides and keeping in mind that $\alpha$ is  decreasing in time, we remark that
\begin{equation}\label{AAA}
\begin{aligned}
    &e^{\alpha(t) |k|^\frac{1}{3}}
    \| \partial_y \psi_k(t) \|_{L^2} 
    \leq 
    e^{\alpha(t) |k|^\frac{1}{3}}\int_0^t e^{s-t}\| (\partial_t +1)\partial_y \psi_k(s) \|_{L^2}
    ds 
    \leq 
    \int_0^t e^{s-t}e^{\alpha(s) |k|^\frac{1}{3}}\| (\partial_t +1)\partial_y \psi_k(s) \|_{L^2}ds\\
    &\leq \!
    \frac{C_\sigma(t)}{5}
    \Big(\!
        1\!+ \!
        \| U_{{\rm sh}} \|_{L^\infty}
        \!+\!
        \| U_{{\rm sh}}' \|_{L^\infty}
        \!+\!
        \| U_{{\rm sh}}'' \|_{L^\infty}
        \!+\!\| U_{{\rm sh}}''' \|_{L^\infty}
    \!\Big)^2
    \sup_{\tilde k\in \mathbb Z}
    \Big\{
        e^{\sigma|\tilde k|^\frac{1}{3}}
        \Big(\!
        \| \Phi_{{\rm in}, \tilde k} \|_{L^2}
        \!+\!
        \|  u_{{\rm in}, \tilde k}       \|_{L^2}
        \!+\!
        \| u_{t,{\rm in}, \tilde k} \|_{L^2}
        \!\Big)
    \Big\},
    \end{aligned}
\end{equation}
where we have used $ \int_0^t e^{s-t}ds = 1-e^{-t}<1$.
Invoking the Poincar\'e inequality $\| \psi_k \|_{L^2}\leq \|\partial_y \psi_k \|_{L^2} $, it is easy at this stage to check that
\begin{align*}
    &\sup_{k\in \mathbb Z}
    \!
    \Big\{
    \! 
     e^{
         \alpha(t)|k|^\frac{1}{3}
     }
     \! \big\| \psi_k(t)\! \big\|_{L^2}
    \! \Big\}
    \! +\!
    \sup_{k\in \mathbb Z}
    \!
    \Big\{
    \! 
    e^{
        \alpha(t)|k|^\frac{1}{3}
    }
    \! 
    \big\| \partial_y \psi_k(t)\! \big\|_{L^2}
    \! 
    \Big\}
    \! +\!
    \sup_{k\in \mathbb Z}
    \!
    \Big\{
    \! 
    e^{
        \alpha(t)|k|^\frac{1}{3}
    }
    \! 
    \big\| 
        \partial^2_{ty} \psi_k(t) 
        \!
    \big\|_{L^2}
    \! 
    \Big\}
    \! +\!
    \sup_{k\in \mathbb Z}
    \!
    \Big\{
    \! 
    e^{
        \alpha(t)|k|^\frac{1}{3}
    }
    \! 
    \big\| \partial_y^2 \psi_k(t)\! \big\|_{L^2}
    \! 
    \Big\}\\
    &
    \leq 
    2\sup_{k\in \mathbb Z}
    \Big\{
    e^{
        \alpha(t)|k|^\frac{1}{3}
    }
    \big\| \partial_y \psi_k(t)  \big\|_{L^2}
    \Big\}
    +
    \sup_{k\in \mathbb Z}
    \Big\{
    e^{
        \alpha(t)|k|^\frac{1}{3}
    }
    \big\| 
        \partial^2_{ty} \psi_k(t) 
    \big\|_{L^2}
    \Big\}
     +
    \sup_{k\in \mathbb Z}
    \Big\{
    e^{
        \alpha(t)|k|^\frac{1}{3}
    }
    \big\| \partial_y^2 \psi_k(t) \big\|_{L^2}
    \Big\}
    \\
    &\leq 
    3\sup_{k\in \mathbb Z}
    \Big\{
    e^{
        \alpha(t)|k|^\frac{1}{3}
    }
    \big\| \partial_y \psi_k(t) \big\|_{L^2} 
    \Big\}
    +
    \sup_{k\in \mathbb Z}
    \!
    \Big\{
    e^{
        \alpha(t)|k|^\frac{1}{3}
    }
    \big\| 
        (\partial_t +1)\partial_{y} \psi_k(t) 
    \big\|_{L^2}
    \Big\}+
    \sup_{k\in \mathbb Z}
    \Big\{
    e^{
        \alpha(t)|k|^\frac{1}{3}
    }
    \big\| \partial_y^2 \psi_k(t) \big\|_{L^2}
    \Big\}.
\end{align*}
We finally couple the last inequality together with \eqref{estimate:main-lemma:dtdypsik+dypsik} and \eqref{AAA}, which finally implies the estimate \eqref{estimate:main-lemma:dypsik-Gevrey-3}. This concludes the proof of the lemma.
\end{proof}
\subsection{Transferring Gevrey 3 to the velocity field}\label{sec:transferring-G3-to-u}
We shall now transfer the Gevrey regularity from the function $\psi$ of Lemma \ref{lemma:Gevrey3-psi} to a solution $u$ of the original hyperbolic Prandtl equation \eqref{eq:Prandtl_hyperbolic}. 
\begin{prop}\label{prop:transferring-Gevrey-to-uk}
    Assume that $u_{\rm in }$ is in $\mathcal{G}_{\sigma,x}^3H^1_{0,y}$, while $u_{t,{\rm in} }$ is in $\mathcal{G}_{x, \sigma}^3L^2_y$, for a given $\sigma>0$.
    Let $T_\sigma>0$ and $\beta,\,\gamma:[0, T_\sigma)\to \mathbb R$ be as in \Cref{main-thm}.   
    Then the sequence $(u_k)_{k\in \mathbb N}$ generates a function $u: [0,T_\sigma)\times \mathbb T\times (0,1)\to \mathbb R$ through the inverse Fourier transform
    \begin{equation}\label{prop2.5:u-Fourier}
        u(t,x,y) = \sum_{k\in \mathbb Z} u_k(t,y)e^{ik x},\qquad 
        (t,x,y)\in [0,T_\sigma)\times \mathbb T \times (0,1),
    \end{equation}
    such that $u\in L^\infty(0, T_\sigma;\mathcal{G}_{\beta(t), x}H^1_{0,y})$ and $\partial_t u \in L^\infty(0, T_\sigma;\mathcal{G}_{\gamma(t), x}L^2_y)$. 
    In particular, the following estimate holds true at any time $t\in [0,T_\sigma)$:
    \begin{equation}\label{estimate:main-lemma:uk-Gevrey-3}
    \begin{aligned}
    \|  u(t)                &\|_{\mathcal{G}^3_{\beta(t),x}L^2_y}+
    \|  \partial_yu(t)      \|_{\mathcal{G}^3_{\beta(t),x}L^2_y}+
    \|  \partial_tu(t)      \|_{\mathcal{G}^3_{\gamma(t),x}L^2_y}
    \\
    &=
    \sup_{k\in \mathbb Z}
    \Big\{
        e^{
          \beta(t)|k|^\frac{1}{3}
        }
       \big\| u_k(t) \big\|_{L^2}
    \Big\}
    +
    \sup_{k\in \mathbb Z}
    \Big\{
     e^{
        \beta(t)|k|^\frac{1}{3}
    }
        \big\| \partial_y u_k(t) \big\|_{L^2}
    \Big\}    
    +
    \sup_{k\in \mathbb Z}
    \Big\{
    e^{
        \gamma(t)|k|^\frac{1}{3}
    }
    \big\| \partial_t u_k(t) \big\|_{L^2}
    \Big\}\\
    &\leq 
    \tilde C_\sigma(t)
    \Big(
        1+ 
        \| U_{{\rm sh}} \|_{L^\infty}
        +
        \| U_{{\rm sh}}' \|_{L^\infty}
        +
        \| U_{{\rm sh}}'' \|_{L^\infty}
        + 
        \| U_{{\rm sh}}''' \|_{L^\infty}
    \Big)^3
    \Big(
        \|  u_{{\rm in}}       \|_{\mathcal{G}_{\sigma, x}^3 H^1_{0,y}}
        +    
        \| u_{t,{\rm in}} \|_{\mathcal{G}_{\sigma, x}^3L^2_y}
    \Big),
    \end{aligned}
    \end{equation}
    where $\tilde C_\sigma(t) = 
    10^4
    \max
    \{
        1,12/{\sigma}
    \}^{15} (1+t)^5e^{t}$.
\end{prop}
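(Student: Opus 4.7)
The plan is to derive Gevrey-class $3$ estimates on $u_k$, $\partial_y u_k$ and $\partial_t u_k$ by running a wave-type energy estimate on a corrected unknown whose source is controlled entirely by the bounds on $\psi_k$ from \Cref{lemma:Gevrey3-psi}. Since $U_{\rm sh}$ depends only on $y$, the operator $\mathcal{L}:=(\partial_t+1)(\partial_t+ikU_{\rm sh})-\partial_y^2$ commutes with $(\partial_t+1)$, so the defining relation $\Phi_k = \mathcal{L}\psi_k$ yields $(\partial_t+1)\Phi_k = \mathcal{L}(\partial_t+1)\psi_k$. Combined with the equation $\mathcal{L}u_k = -U_{\rm sh}'(\partial_t+1)v_k = ikU_{\rm sh}'(\partial_t+1)\Phi_k$ extracted from \eqref{eq:Prandtl-in-uk-Fourier}, the ansatz $W_k := u_k - ikU_{\rm sh}'(\partial_t+1)\psi_k$ produces, using only the commutator $[\mathcal{L},U_{\rm sh}'] = -[\partial_y^2,U_{\rm sh}'] = -(U_{\rm sh}''' + 2U_{\rm sh}''\partial_y)$ (every other part of $\mathcal{L}$ commutes with multiplication by $U_{\rm sh}'$), the clean equation
\begin{equation*}
\mathcal{L}W_k = ik\bigl(U_{\rm sh}''' + 2U_{\rm sh}''\partial_y\bigr)(\partial_t+1)\psi_k =: F_k.
\end{equation*}
The Dirichlet conditions on both $u_k$ and $\psi_k$ give $W_k|_{y=0,1}=0$, and the $\psi_k$-equation at $t=0$ (together with $\psi_k(0)=\partial_t\psi_k(0)=0$) yields $W_k(0)=u_{{\rm in},k}$ and $\partial_tW_k(0)=u_{t,{\rm in},k} - ikU_{\rm sh}'\Phi_{{\rm in},k}$.

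Next I would perform the standard damped-wave energy estimate on $\mathcal{L}W_k=F_k$: testing against $\overline{(\partial_t+1)W_k}$, the transport contribution $ikU_{\rm sh}|(\partial_t+1)W_k|^2$ is purely imaginary, the boundary terms from $-\partial_y^2 W_k$ vanish, and the identity $(\partial_t+1)\partial_tW_k = \partial_t((\partial_t+1)W_k)$ delivers
\begin{equation*}
\tfrac{d}{dt}\bigl(\|(\partial_t+1)W_k\|_{L^2}^2 + \|\partial_yW_k\|_{L^2}^2\bigr) + 2\|\partial_yW_k\|_{L^2}^2 \leq \|F_k\|_{L^2}^2 + \|(\partial_t+1)W_k\|_{L^2}^2.
\end{equation*}
Gronwall's inequality then controls $\|(\partial_t+1)W_k(t)\|_{L^2} + \|\partial_yW_k(t)\|_{L^2}$ in terms of the initial data and $\int_0^t\|F_k(s)\|_{L^2}^2\,ds$. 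Since $\|F_k(s)\|_{L^2}$ is bounded by $|k|$ times $\|U_{\rm sh}''\|_{L^\infty}\|(\partial_t+1)\partial_y\psi_k(s)\|_{L^2} + \|U_{\rm sh}'''\|_{L^\infty}\|(\partial_t+1)\psi_k(s)\|_{L^2}$, and \Cref{lemma:Gevrey3-psi} controls both $(\partial_t+1)\psi_k$ and $(\partial_t+1)\partial_y\psi_k$ at the Gevrey radius $\alpha(t)$ (note that $\partial_t\psi_k$ is recovered from $\partial_t\partial_y\psi_k$ by Poincar\'e, since $\psi_k|_{y=0}=0$), multiplying the Gronwall conclusion by $e^{\beta(t)|k|^{1/3}}$ and exploiting the gap $\alpha(t)-\beta(t) = \sigma/4$ to absorb $|k|$ via the elementary inequality $|k|^m \leq (3m/c)^{3m}e^{c|k|^{1/3}}$ produces Gevrey-class $3$ bounds at radius $\beta(t)$ on $(\partial_t+1)W_k$ and $\partial_yW_k$. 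Integrating $(\partial_t+1)W_k$ in time by the same Duhamel trick used in the proof of \Cref{lemma:Gevrey3-psi} recovers $W_k$ and hence $\partial_tW_k$ at the same radius $\beta(t)$.

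The passage to $u_k$ is now algebraic. The decomposition $u_k = W_k + ikU_{\rm sh}'(\partial_t+1)\psi_k$ and its $y$-derivative $\partial_yu_k = \partial_yW_k + ikU_{\rm sh}''(\partial_t+1)\psi_k + ikU_{\rm sh}'(\partial_t+1)\partial_y\psi_k$ add to $W_k$ and $\partial_yW_k$ only $|k|$-weighted Gevrey-$\alpha$ quantities, which the gap $\alpha-\beta=\sigma/4$ already absorbs, giving Gevrey-class $3$ bounds at radius $\beta(t)$. For $\partial_tu_k = \partial_tW_k + ikU_{\rm sh}'\partial_t(\partial_t+1)\psi_k$ I would use the $\psi_k$-equation to rewrite $\partial_t(\partial_t+1)\psi_k = \Phi_k - ikU_{\rm sh}(\partial_t+1)\psi_k + \partial_y^2\psi_k$, where now $\Phi_k(t,y)=\int_0^y u_k(t,z)\,dz$ is controlled by the Gevrey bound on $u_k$ at radius $\beta$ already obtained, while the remaining $\psi_k$-quantities live at radius $\alpha$ and come multiplied by $|k|^2$ at worst. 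The further gap $\beta-\gamma=\sigma/8$ exactly absorbs this second power of $|k|$, producing the Gevrey bound on $\partial_tu_k$ at radius $\gamma(t)$. Summing the Fourier series \eqref{prop2.5:u-Fourier} is then legitimate by the $e^{-\beta(t)|k|^{1/3}}$ decay, yielding the function $u$ and the bound \eqref{estimate:main-lemma:uk-Gevrey-3} after collecting the constants.

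The principal technical obstacle is the bookkeeping of these constants: checking that the consecutive losses of $\sigma/4$ and $\sigma/8$ in spectral radius are sufficient to absorb every factor of $|k|$ and $|k|^2$ arising from the commutator source $F_k$, from the corrective term $ikU_{\rm sh}'(\partial_t+1)\psi_k$, and from the single time differentiation needed for $\partial_tu_k$, while simultaneously producing the explicit prefactor $\tilde{C}_\sigma(t) = 10^4\max\{1,12/\sigma\}^{15}(1+t)^5e^t$ with polynomial-in-shear-flow dependence of order three, as demanded by the statement.
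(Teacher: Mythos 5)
Your proposal is correct and takes essentially the same route as the paper: your $W_k = u_k - ikU_{\rm sh}'(\partial_t+1)\psi_k$ is precisely $\mathcal L\partial_y\psi_k$, which is the paper's auxiliary function $f_k$ (your sign is the correct one -- the paper's \eqref{eq:fk-and-uk} has a harmless sign typo), the source equation $\mathcal LW_k = ik(U_{\rm sh}'''+2U_{\rm sh}''\partial_y)(\partial_t+1)\psi_k$ matches \eqref{eq:d_ypsik-final-form2}, the damped-wave energy estimate tested against $\overline{(\partial_t+1)W_k}$ is identical, and the two successive losses $\alpha-\beta=\sigma/4$ and $\beta-\gamma=\sigma/8$ to absorb the $|k|$ and $|k|^2$ factors in the algebraic recovery of $u_k$, $\partial_y u_k$, $\partial_t u_k$ are exactly what the paper does. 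The only small variation is that you close the energy inequality by Young + Gronwall, producing $(\int_0^t\|F_k\|^2\,ds)^{1/2}$, whereas the paper divides through by the supremum norm of the energy to get $\int_0^t\|F_k\|\,ds$ directly; both yield the same polynomial-in-$t$, $e^t$-type prefactor up to constants.
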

\begin{proof}
Similarly as in \Cref{lemma:Gevrey3-psi}, we focus this entire proof to show the estimate \eqref{estimate:main-lemma:uk-Gevrey-3} on the sequence $(u_k)_{k\in \mathbb Z}$. The solution $u$ is then automatically determined by the Fourier series \eqref{prop2.5:u-Fourier}.

\noindent
We  fix momentarily the frequency $k \in \mathbb Z$. If the initial data $(u_{{\rm in}, k},\,u_{{\rm in}, k,t})$ are identically null, then the solution $u_k$ of \eqref{eq:Prandtl-in-uk-Fourier} is identically null (since the equation in \eqref{eq:Prandtl-in-uk-Fourier} for a fixed $k\in \mathbb Z$ is linear, hyperbolic and damped) and the inequality \eqref{estimate:main-lemma:uk-Gevrey-3} is automatically satisfied. We shall thus focus this proof to the case $(u_{{\rm in}, k},\,u_{{\rm in}, k,t})\neq (0,0)$.

\noindent 
We begin with by setting the function $f_k := \big( (\partial_{t} + 1)(\partial_t  + ik U_{{\rm sh}} )  -\partial_y^2\big)
    \partial_y
    \psi_k$. Thanks  to  \eqref{eq:psik-not-compact}, $f_k$ satisfies 
\begin{equation}\label{eq:fk-and-uk}
    f_k(t,y) = ik U_{\rm sh}'(y) (\partial_t +1) \psi_k(t,y) + \partial_y \Phi_k(t,y)
    = 
    ik U_{\rm sh}'(y) (\partial_t +1) \psi_k(t,y) + u_k(t,y),
\end{equation}    
for any $ (t,y) \in [0, T_\sigma)\times (0,1)$. Because of the boundary conditions on $\psi_k$ and $u_k$, the function $f_k$ fulfills homogeneous Dirichlet conditions $f_{k|y = 0,1} = 0 $. Furthermore its initial data are determined by
\begin{equation}\label{initial-condition-on-fk}
    f_k(0,y) = u_{{\rm in} , k}(y)
    \quad 
    \partial_t f_k(0,y)
    = 
    ik U_{{\rm sh}}'(y)
    \partial_t^2 \psi_{k}(0, y) + 
    u_{t,{\rm in} , k}(y)
     = 
    ik U_{{\rm sh}}'(y)
    \Phi_{{\rm in},k}(y) + 
    u_{t,{\rm in} , k}(y)
\end{equation}
for any $y\in (0,1)$. Thanks to identity \eqref{eq:d_ypsik-final-form}, we remark moreover that $f_k$ is also solution of
    \begin{equation}\label{eq:d_ypsik-final-form2}
\begin{aligned}
     \big( (\partial_{t} + 1)(\partial_t  + ik U_{{\rm sh}} )  -\partial_y^2\big)f_k
    = 
   [ik U_{{\rm sh}}'  , \partial_{y}^2]\big( (\partial_t + 1)\psi_k\big).
\end{aligned}
\end{equation}
Starting from \eqref{eq:d_ypsik-final-form2}, we aim to develop some suitable estimates on the $L^2$-norm of $(\partial_t+1)f_k$ and $\partial_y f_k$, which we will eventually transfer to $u_k$, $\partial_y u_k$ and $\partial_t u_k$, making use of \eqref{eq:fk-and-uk}. We shall observe that we have now a complete control on the right-hand side of \eqref{eq:d_ypsik-final-form2}, because of the uniform estimates given by  \Cref{lemma:Gevrey3-psi}.

\noindent
We multiply \eqref{eq:d_ypsik-final-form2} with the conjugate $\overline{(\partial_t +1)f_k}$, then we integrate along $(0,t) \times (0,1)$ for a time $t\in [0,T_\sigma)$ and finally we calculate the real part ${\mathfrak{Re}}$ of the result. This leads in particular to
\begin{equation}\label{eq:first-equation-of-(dt+1)fk}
\begin{aligned}
    \frac{1}{2}\big\|  
        (\partial_t +1)
        f_k(t) 
    \big\|_{L^2}^2
    +
    \frac{1}{2}
    \big\|  
        \partial_y
        f_k(t) 
    \big\|_{L^2}^2
    +
    \int_0^t 
    \big\|  
        \partial_y 
        f_k(s) 
    \big\|_{L^2}^2ds
    = 
    \frac{1}{2}\big\|  
        u_{{\rm in} , k}
        +
        ik U_{{\rm sh}}'
        \Phi_{{\rm in},k} + 
        u_{t,{\rm in} , k}
    \big\|_{L^2}^2
    +\\
    +
    \frac{1}{2}\big\|  
        \partial_y u_{{\rm in} , k}
    \big\|_{L^2}^2
    +
    {\mathfrak{Re}}
    \bigg(
    ik 
    \int_0^t \int_0^1 
    [ik U_{{\rm sh}}'  , \partial_{y}^2]\big( (\partial_t + 1)\psi_k\big)
    \overline{(\partial_t +1)f_k}
    dy
    ds
    \bigg),
\end{aligned}
\end{equation}
where we have used the initial conditions in \eqref{initial-condition-on-fk}, as well as
\begin{equation*}
    {\mathfrak{Re}}
    \bigg( 
    ik
    \int_0^t \int_0^1 U_{\rm sh}(y) |(\partial_t +1) f_k(s,y)|^2dyds
    \bigg) = 0.
\end{equation*}
Multiplying \eqref{eq:first-equation-of-(dt+1)fk} by $2$ and applying the Cauchy-Schwarz inequality on the last integral, we obtain
\begin{equation}\label{est-fk-before-commutator}
\begin{aligned}
    \big\|  
        (\partial_t +1)
        f_k(t) 
    \big\|_{L^2}^2+
    \big\|  
        \partial_y
        f_k(t) 
    \big\|_{L^2}^2
    +
    2
    \int_0^t
    \big\|  
        \partial_y 
        f_k(s) 
    \big\|_{L^2}^2ds
    \leq 
    \big\|  
        u_{{\rm in} , k}
        +
        ik U_{{\rm sh}}'
        \Phi_{{\rm in},k} + 
        u_{t,{\rm in} , k}
    \big\|_{L^2}^2+ 
        \\
   +
    \| \partial_y u_{{\rm in}, k} \|_{L^2}^2
    +
    2|k|
    \int_0^t
    \big\|  
        [ U_{{\rm sh}}'  , \partial_{y}^2]\big( (\partial_t + 1)\psi_k(s)\big)
    \big\|_{L^2}
    \| (\partial_t +1)f_k(s) \|_{L^2}ds.
\end{aligned}
\end{equation}
We next address the last integral in \eqref{eq:first-equation-of-(dt+1)fk} with the commutator $[ U_{{\rm sh}}'  , \partial_{y}^2]( (\partial_t + 1)\psi_k)$. First, we remark
\begin{align*}
    \big\|  
        [ U_{{\rm sh}}'  , \partial_{y}^2]\big( (\partial_t + 1)\psi_k(s)\big)
    \big\|_{L^2}
    &= 
    \big\|  
        U_{{\rm sh}}' \partial_{y}^2\big( (\partial_t + 1)\psi_k(s) \big) - 
        \partial_{y}^2\big(U_{{\rm sh}}'(\partial_t + 1)\psi_k(s)\big)
    \big\|_{L^2}\\
    &= 
    \big\|  
        U_{{\rm sh}}''' (\partial_t + 1)\psi_k(s) +2 
        U_{{\rm sh}}'' (\partial_t + 1)\partial_{y}\psi_k(s)
    \big\|_{L^2}
    \\
    &\leq 
    \| U_{{\rm sh}}''' \|_{L^\infty}
   \|(\partial_t +  1)\psi_k(s)\|_{L^2}
    +2
    \| U_{{\rm sh}}'' \|_{L^\infty}
    \|(\partial_t +  1)\partial_y\psi_k(s)\|_{L^2}
    \\
    &\leq 
    \big(
    \| U_{{\rm sh}}''' \|_{L^\infty}
    +2
    \| U_{{\rm sh}}'' \|_{L^\infty}
    \big)
    \|(\partial_t +  1)\partial_y\psi_k(s)\|_{L^2}
\end{align*}
Hence, we plug this last inequality into \eqref{est-fk-before-commutator}, we take the Supermum within the time interval $[0,t]$ and finally we divide the result by 
$\sup_{s\in [0,t]}( \| (\partial_t +1)f_k(s) \|_{L^2}^2+\| (\partial_t +1)f_k(s) \|_{L^2}^2)^{1/2} $ (which is not null, since the initial data are not all zero). This leads to
\begin{align*}
   &
   \sup_{s\in [0,t]}
   \Big(
    \|  
        (\partial_t +1)
        f_k(s) 
    \|_{L^2}
    +
    \| 
        \partial_y f_k(s)
    \|_{L^2}
    \Big)
    \leq 
    \sqrt{2}
    \sup_{s\in [0,t]}
    \Big(
    \|  
        (\partial_t +1)
        f_k(s) 
    \|_{L^2}^2
    +
    \| 
        \partial_y f_k(s)
    \|_{L^2}^2
    \Big)^\frac{1}{2}
    \\
    &\leq 
    \sqrt{2}\frac{
    \sup_{s\in [0,t]}
    \Big(
    \|  
        (\partial_t +1)
        f_k(s) 
    \|_{L^2}^2
    +
    \| 
        \partial_y f_k(s)
    \|_{L^2}^2
    \Big)
    }
    {
    \sup_{s\in [0,t]}
    \Big(
    \|  
        (\partial_t +1)
        f_k(s) 
    \|_{L^2}^2
    +
    \| 
        \partial_y f_k(s)
    \|_{L^2}^2
    \Big)^\frac{1}{2}
    }
    \\
    &\leq 
    \sqrt{2}
    \frac{
        \big\|  
            u_{{\rm in} , k}
            +
            ik U_{{\rm sh}}'
            \Phi_{{\rm in},k} + 
            u_{t,{\rm in} , k}
        \big\|_{L^2}^2
        +
        \| \partial_y u_{{\rm in}, k} \|_{L^2}^2
    }
    {
        \sup_{s\in [0,t]}
        \big(
        \|  
            (\partial_t +1)
            f_k(s) 
        \|_{L^2}^2
        +
        \| \partial_y f_k(s) \|_{L^2}^2
        \big)^\frac{1}{2}
    }
    +\\
    &+
    2\sqrt{2}
    \big(
    \| U_{{\rm sh}}''' \|_{L^\infty}
    +2
    \| U_{{\rm sh}}'' \|_{L^\infty}
    \big)
    |k|
    \int_0^t
     \|(\partial_t +  1)\partial_y\psi_k(s)\|_{L^2}
     \frac{
     \|(\partial_t +  1)f_k(s)\|_{L^2}
    }
    {
        \sup_{s\in [0,t]}
        \big(
        \|  
            (\partial_t +1)
            f_k(s) 
        \|_{L^2}^2
        +
        \| \partial_y f_k(s) \|_{L^2}^2
        \big)^\frac{1}{2}
    }
     ds.
\end{align*}
At $s = 0$, 
the functions $ (\partial_t +1)f_k(s) $ and $\partial_y f_k(s)$ coincide with $u_{{\rm in} , k}+ik U_{{\rm sh}}'\Phi_{{\rm in},k} +  u_{t,{\rm in} , k} $ and $\partial_y u_{{\rm in}, k}$, respectively. We deduce hence that
\begin{equation}\label{est:fk-almost-there}
\begin{aligned}
   \sup_{s\in [0,t]}
   \Big(
    \|  
        (\partial_t +1)
        f_k(s) 
    \|_{L^2}
    +
    \| 
        \partial_y f_k(s)
    \|_{L^2}
    \Big)
    \leq 
    \sqrt{2}
    \Big(
        \big\|  
            u_{{\rm in} , k}
            +
            ik U_{{\rm sh}}'
            \Phi_{{\rm in},k} + 
            u_{t,{\rm in} , k}
        \big\|_{L^2}^2
        +
        \| \partial_y u_{{\rm in}, k} \|_{L^2}^2
    \Big)^\frac{1}{2}
    +\\
    +
    2
    \sqrt{2}
    \big(
    \| U_{{\rm sh}}''' \|_{L^\infty}
    \!+\!
    2
    \| U_{{\rm sh}}'' \|_{L^\infty}
    \big)
    |k|
    \int_0^t\!\!
     \|(\partial_t\!+\!
      1)\partial_y\psi_k(s)\|_{L^2}
     ds
    \leq 
    \sqrt{2}\Big(
        \|  u_{{\rm in} , k} \|_{L^2}
        \!+\!
        |k| 
        \| U_{{\rm sh}}' \|_{L^\infty}
        \| \Phi_{{\rm in},k} \|_{L^2}
        + \\ 
        +
        \|
            u_{t,{\rm in} , k}
        \|_{L^2}
        +
        \| \partial_y u_{{\rm in}, k} \|_{L^2}
    \Big)
    +
    2
    \sqrt{2}
    \big(
    \| U_{{\rm sh}}''' \|_{L^\infty}
    +2
    \| U_{{\rm sh}}'' \|_{L^\infty}
    \big)
    |k|
    \int_0^t
     \|(\partial_t +  1)\partial_y\psi_k(s)\|_{L^2}
     ds.
\end{aligned}
\end{equation}
We are now in the condition to reveal the uniform estimates of the Gevrey-three regularity  on the sequences $((\partial_t +1)f_k)_{k\in\mathbb Z}$ and $(\partial_y f_k)_{k\in\mathbb Z}$, by establishing the corresponding exponential growth on the modes $|k|$. To this end, we take advantage of \Cref{lemma:Gevrey3-psi}, so that the last integral in \eqref{est:fk-almost-there} enables
\begin{align*}
    \int_0^t
     \|(\partial_t +  1)\partial_y\psi_k(s)\|_{L^2}
     ds
     \leq 
    \bigg(
    \int_0^t
    C_\sigma(s)e^{-\alpha(s) |k|^\frac{1}{3}}
    ds
    \bigg)
    \Big(
        1+ 
        \| U_{{\rm sh}} \|_{L^\infty}
        +
        \| U_{{\rm sh}}' \|_{L^\infty}
        +
        \| U_{{\rm sh}}'' \|_{L^\infty}
        + \\+
        \| U_{{\rm sh}}''' \|_{L^\infty}
    \Big)^2
    \sup_{\tilde k\in \mathbb Z}
    \Big\{
        e^{\sigma|\tilde k|^\frac{1}{3}}
        \Big(
        \| \Phi_{{\rm in}, \tilde k} \|_{L^2}
        \!+\!
        \|  u_{{\rm in}, \tilde k}       \|_{L^2}
        \!+\!
        \| u_{t,{\rm in}, \tilde k} \|_{L^2}
        \!+\!
        \| \partial_y u_{{\rm in}, \tilde k} \|_{L^2}
        \Big)
    \Big\},
\end{align*}
where we recall that $C_\sigma$ in \Cref{lemma:Gevrey3-psi} is defined as $C_\sigma(t) = 170\max\{1,12/{\sigma}\}^6t(1+t)^3e^{t}$, while the radius $\alpha(t) =  \sigma/2-2^\frac{5}{6}(\| U_{{\rm sh}}''' \|_{L^\infty}+
2\| U_{{\rm sh}}'' \|_{L^\infty})^\frac{1}{3}e^{t/3}t$ (which satisfies $\alpha(t)\geq 7\sigma/8>0$, for any $t\in [0,T_\sigma)$). Since $s\in [0,t]\to C_\sigma(s)e^{-\alpha(s) |k|^\frac{1}{3}}$ is a non-decreasing function, it can be bound by its value at $s = t$, so that
\begin{align*}
    \int_0^t
    \!
     \|(\partial_t +  1)\partial_y\psi_k(s)\|_{L^2}
     ds
     \leq
    t C_\sigma(t) e^{-\alpha(t) |k|^\frac{1}{3}}
    \Big(
        1+ 
        \| U_{{\rm sh}} \|_{L^\infty}
        +
        \| U_{{\rm sh}}' \|_{L^\infty}
        +
        \| U_{{\rm sh}}'' \|_{L^\infty}
        + \\ 
        +
        \| U_{{\rm sh}}''' \|_{L^\infty}
    \Big)^2
    \sup_{\tilde k\in \mathbb Z}
    \Big\{
        e^{\sigma|\tilde k|^\frac{1}{3}}
        \Big(
        \| \Phi_{{\rm in}, \tilde k} \|_{L^2}
        \!+\!
        \|  u_{{\rm in}, \tilde k}       \|_{L^2}
        \!+\!
        \| u_{t,{\rm in}, \tilde k} \|_{L^2}
        \!+\!
        \| \partial_y u_{{\rm in}, \tilde k} \|_{L^2}
        \Big)
    \Big\}.
\end{align*}
We thus couple this last inequality together with \eqref{est:fk-almost-there}, which guarantees 
\begin{align*}
        \|  
            (\partial_t +1)
            f_k(t) 
        \|_{L^2}
        +
        \|  
            \partial_y 
            f_k(t) 
        \|_{L^2}
    \leq
     \sqrt{2}\Big(
        \|  u_{{\rm in} , k} \|_{L^2}
        +
        |k| 
        \| U_{{\rm sh}}' \|_{L^\infty}
        \!
        \| \Phi_{{\rm in},k} \|_{L^2}
        +
        \|
            u_{t,{\rm in} , k}
        \|_{L^2}
        + 
        \| \partial_y u_{{\rm in}, k} \|_{L^2}
    \Big)
    +\\+
    4
    \sqrt{2}
    \big(
    \| U_{{\rm sh}}''' \|_{L^\infty}
    +
    \| U_{{\rm sh}}'' \|_{L^\infty}
    \big)
    |k|
     t C_\sigma(t) e^{-\alpha(t) |k|^\frac{1}{3}}
    \Big(
        1+ 
        \| U_{{\rm sh}} \|_{L^\infty}
        +
        \| U_{{\rm sh}}' \|_{L^\infty}
        + \\ +
        \| U_{{\rm sh}}'' \|_{L^\infty}
        +
        \| U_{{\rm sh}}''' \|_{L^\infty}
    \Big)^2
    \sup_{\tilde k\in \mathbb Z}
    \Big\{
        e^{\sigma|\tilde k|^\frac{1}{3}}
        \Big(
        \| \Phi_{{\rm in}, \tilde k} \|_{L^2}
        \!+\!
        \|  u_{{\rm in}, \tilde k}       \|_{L^2}
        \!+\!
        \| u_{t,{\rm in},\tilde  k} \|_{L^2}
        \!+\!
        \| \partial_y u_{{\rm in}, k} \|_{L^2}
        \Big)
    \Big\}.
\end{align*}
Finally, to obtain our uniform estimates, we multiply both left and right-hand sides with $e^{\beta(t)|k|^{1/3}}$ (where $\beta(t)= \alpha(t)-\sigma/4 $ is defined in \eqref{radii-of-Gevrey-beta-gamma}), to obtain 
\begin{align*}
        e^{\beta(t)|k|^\frac{1}{3}}
        \Big(
        \|  
            (\partial_t +1)
            f_k(t) 
        \|_{L^2}
        +
        \|  
            \partial_y 
            f_k(t) 
        \|_{L^2}
        \Big)
    \leq
     \sqrt{2}
     e^{\beta(t)|k|^\frac{1}{3}}
     \Big(
        \|  u_{{\rm in} , k} \|_{L^2}
        +
        |k| 
        \| U_{{\rm sh}}' \|_{L^\infty}
        \| \Phi_{{\rm in},k} \|_{L^2}
        +
        \|
            u_{t,{\rm in} , k}
        \|_{L^2}
        +\\+ 
        \| \partial_y u_{{\rm in}, k} \|_{L^2}
    \Big)
    +
    4
    \sqrt{2}
    e^{-\frac{\sigma}{4}|k|^\frac{1}{3}}
    \big(
    \| U_{{\rm sh}}''' \|_{L^\infty}
    +
    \| U_{{\rm sh}}'' \|_{L^\infty}
    \big)
    |k|
     t C_\sigma(t)
    \Big(
        1+ 
        \| U_{{\rm sh}} \|_{L^\infty}
        +
        \| U_{{\rm sh}}' \|_{L^\infty}
        + \\ +
        \| U_{{\rm sh}}'' \|_{L^\infty}
        +
        \| U_{{\rm sh}}''' \|_{L^\infty}
    \Big)^2
    \sup_{\tilde k\in \mathbb Z}
    \Big\{
        e^{\sigma|\tilde k|^\frac{1}{3}}
        \Big(
        \| \Phi_{{\rm in}, \tilde k} \|_{L^2}
        \!+\!
        \|  u_{{\rm in}, \tilde k}       \|_{L^2}
        \!+\!
        \| u_{t,{\rm in},\tilde  k} \|_{L^2}
        \!+\!
        \| \partial_y u_{{\rm in}, k} \|_{L^2}
        \Big)
    \Big\}.
\end{align*}
The left-hand side already reveals the Gevrey-three norm on $((\partial_t +1)f_k)_{k\in \mathbb Z}$ and $(\partial_y f_k)_{k\in \mathbb Z}$. We shall however provide a uniform estimate of the right-hand side, with respect to the frequencies $k\in \mathbb Z$.
To this end, we first observe that 
\begin{equation}\label{ineq:k-est-with-exp}
    |k| =  
    \bigg(
    \frac{4}{\sigma}
    \bigg)^3
    \Big(
        \frac{\sigma}{4}|k|^{\frac{1}{3}}
    \Big)^3 
    \leq 
    \bigg(
    \frac{4}{\sigma}
    \bigg)^3 e^{\frac{3\sigma}{4}|k|^{\frac{1}{3}}},
    \qquad 
    |k| =  
    \bigg(
    \frac{12}{\sigma}
    \bigg)^3
    \Big(
        \frac{\sigma}{12}|k|^{\frac{1}{3}}
    \Big)^3 
    \leq 
    \bigg(
    \frac{12}{\sigma}
    \bigg)^3 e^{\frac{\sigma}{4}|k|^{\frac{1}{3}}}.
\end{equation}
Therefore, since $1\leq\max\{1, 4/\sigma\}^3e^{3\sigma |k|^{1/3}/4} $,  we gather that
\begin{align*}
    e^{\beta(t)|k|^\frac{1}{3}}
     \Big(
        \|  
            (\partial_t \!+\!1)
            f_k(t) 
        \|_{L^2}
        \!+\!
        \|  
            \partial_y 
            f_k(t) 
        \|_{L^2}
    \Big)
    \leq
    \sqrt{2}
     e^{\big(\beta(t)+\frac{3\sigma}{4}\big)|k|^\frac{1}{3}}
    \!
     \max
    \Big\{
        1, \frac{4}{\sigma}
    \Big\}^3
     \Big(
        \|  u_{{\rm in} , k} \|_{L^2}
        \!+\! 
        \| U_{{\rm sh}}' \|_{L^\infty}
        \| \Phi_{{\rm in},k} \|_{L^2}
        \!+\!\\ +
        \|
            u_{t,{\rm in} , k}
        \|_{L^2}
        +
        \| \partial_y u_{{\rm in}, k} \|_{L^2}
    \Big)
    +
    4
    \sqrt{2}
    e^{-\frac{\sigma}{4}|k|^\frac 13 }
    \big(
    \| U_{{\rm sh}}''' \|_{L^\infty}
    +
    \| U_{{\rm sh}}'' \|_{L^\infty}
    \big)
    \bigg(
    \frac{12}{\sigma}
    \bigg)^3
    e^{\frac{\sigma}{4}|k|^\frac 13 }
    t C_\sigma(t)
    \Big(
        1+ 
        \| U_{{\rm sh}} \|_{L^\infty}
        +\\+
        \| U_{{\rm sh}}' \|_{L^\infty}
        + 
        \| U_{{\rm sh}}'' \|_{L^\infty}
        +
        \| U_{{\rm sh}}''' \|_{L^\infty}
    \Big)^2
    \sup_{\tilde k\in \mathbb Z}
    \Big\{
        e^{\sigma|\tilde k|^\frac{1}{3}}
        \Big(
        \| \Phi_{{\rm in}, \tilde k} \|_{L^2}
        \!+\!
        \|  u_{{\rm in}, \tilde k}       \|_{L^2}
        \!+\!
        \| u_{t,{\rm in}, \tilde k} \|_{L^2}
        \!+\!
        \| \partial_y u_{{\rm in}, \tilde k} \|_{L^2}
        \Big)
    \Big\}.
\end{align*}
This provides the required uniform estimate in $k\in \mathbb Z$, since $3\sigma/4 + \beta(t) =\alpha(t) + \sigma/2 \leq \sigma $ for any $t\in [0, T_\sigma)$, hence
\begin{equation}\label{ineq:est-on-dt+1fk-dyfk-final}
\begin{aligned}
    e^{\beta(t)|k|^\frac{1}{3}}
     \Big(
        \|  
            (\partial_t +1)
            f_k(t) 
        \|_{L^2}
        +
        \|  
            \partial_y 
            f_k(t) 
        \|_{L^2}
    \Big)
    \leq 
    C_{2,\sigma}(t)
    \Big(
        1+ 
        \| U_{{\rm sh}} \|_{L^\infty}
        +
        \| U_{{\rm sh}}' \|_{L^\infty}
        +
        \| U_{{\rm sh}}'' \|_{L^\infty}
        + \\+
        \| U_{{\rm sh}}''' \|_{L^\infty}
    \Big)^3
    \sup_{\tilde k\in \mathbb Z}
    \Big\{
        e^{\sigma|\tilde k|^\frac{1}{3}}
        \Big(
        \| \Phi_{{\rm in}, \tilde k} \|_{L^2}
        +
        \|  u_{{\rm in}, \tilde k}       \|_{L^2}
        +
        \| u_{t,{\rm in}, \tilde k} \|_{L^2}
        +
        \| \partial_y u_{{\rm in}, \tilde k} \|_{L^2}
        \Big)
    \Big\},
\end{aligned}
\end{equation}
where the function $t\in (0, T_\sigma) \to  C_{2,\sigma}(t)$ is now defined by
\begin{equation}\label{def:C2sigma}
\begin{aligned}
     C_{2,\sigma}(t)
    &:=
    \sqrt{2}
    \max
    \Big\{
        1, \frac{4}{\sigma}
    \Big\}^3+ 
    4
    \sqrt{2}
    \bigg( \frac{12}{\sigma}\bigg)^3
    t C_\sigma (t)\\
    &=
    \sqrt{2}
    \max
    \Big\{
        1, \frac{4}{\sigma}
    \Big\}^3
    +
    4
    \sqrt{2}
    \bigg( \frac{12}{\sigma}\bigg)^3
    170\cdot \max
    \bigg\{
        1,\frac{12}{\sigma}
    \bigg\}^6 t^2(1+t)^3e^{t}.
\end{aligned}
\end{equation}
We now take advantage of \eqref{ineq:est-on-dt+1fk-dyfk-final}, in order to transfer the corresponding Gevrey-three estimates to the sequences $(u_k)_{k\in \mathbb Z}$, $(\partial_y u_k)_{k\in \mathbb Z}$ and 
$(\partial_t u_k)_{k\in \mathbb Z}$. We begin with $\partial_y u_k$ and we invoke identity \eqref{eq:fk-and-uk}, which ensures that $\partial_y u_k = \partial_y f_k - ik U_{\rm sh}''(\partial_t +1)\psi_k -ik U_{\rm sh}'(\partial_t +1)\partial_y\psi_k$. Hence
\begin{align*}
     e^{\beta(t) |k|^\frac{1}{3}}
     \| \partial_y u_k (t) \|_{L^2}
     &\leq 
     e^{\beta(t) |k|^\frac{1}{3}}
     \Big(
        \| \partial_y f_k \|_{L^2}
        +
        |k|
        \| U_{\rm sh}'  \|_{L^\infty}
        \| (\partial_t +1) \partial_y \psi_k \|_{L^2}
        +
        |k|
        \| U_{\rm sh}''  \|_{L^\infty}
        \| (\partial_t +1)  \psi_k \|_{L^2}
     \Big),
\end{align*}
which we couple together with \eqref{ineq:k-est-with-exp}, the Poincar\'e inequality $\| (\partial_t+1)\psi_k \|_{L^2}\leq \| (\partial_t+1)\partial_y \psi_k \|_{L^2}$ and the relation $\alpha(t) = \beta(t)+\sigma/4$, to obtain
\begin{align*}
     e^{\beta(t) |k|^\frac{1}{3}}
     \| \partial_y u_k (t) \|_{L^2}
     &\leq e^{\beta(t) |k|^\frac{1}{3}}
     \bigg(
        \| \partial_y f_k \|_{L^2}
        +
        \Big(\frac{12}{\sigma}\Big)^3
        e^{\frac{\sigma}{4}|k|^\frac{1}{3}}
        \big(
            \| U_{\rm sh}'  \|_{L^\infty}
            +
            \| U_{\rm sh}''  \|_{L^\infty}
        \big)
        \| (\partial_t +1) \partial_y \psi_k \|_{L^2}
     \bigg)\\
     &\leq 
    e^{\beta(t) |k|^\frac{1}{3}}
    \| \partial_y f_k \|_{L^2}
    +
    \Big(\frac{12}{\sigma}\Big)^3
    e^{\alpha(t) |k|^\frac{1}{3}}
    \big(
        \| U_{\rm sh}'  \|_{L^\infty}
        +
        \| U_{\rm sh}''  \|_{L^\infty}
    \big)
    \| (\partial_t +1) \partial_y \psi_k \|_{L^2}.
\end{align*}
Thanks to \eqref{ineq:est-on-dt+1fk-dyfk-final} and the Poincar\'e inequality $\| u_k \|_{L^2}\leq \| \partial_y u_k \|_{L^2}$, we establish therefore the uniform estimate
\begin{equation}\label{est:ineq-on-uk-dyuk}
\begin{aligned}
    e^{\beta(t) |k|^\frac{1}{3}}
    \Big(
         \| u_k (t) \|_{L^2}
         +
         \| \partial_y u_k (t) \|_{L^2}
    \Big)
     \leq 
     \Big(  C_{2,\sigma}(t) + \Big(\frac{12}{\sigma}\Big)^3C_\sigma(t) \Big)
    \Big(
        1+ 
        \| U_{{\rm sh}} \|_{L^\infty}
        +
        \| U_{{\rm sh}}' \|_{L^\infty}
        +\\+
        \| U_{{\rm sh}}'' \|_{L^\infty}
        + 
        \| U_{{\rm sh}}''' \|_{L^\infty}
    \Big)^3
    \sup_{\tilde k\in \mathbb Z}
    \Big\{
        e^{\sigma|\tilde k|^\frac{1}{3}}
        \Big(
        \| \Phi_{{\rm in}, \tilde k} \|_{L^2}
        +
        \|  u_{{\rm in}, \tilde k}       \|_{L^2}
        +
        \| u_{t,{\rm in}, \tilde k} \|_{L^2}
        +
        \| \partial_y u_{{\rm in}, \tilde k} \|_{L^2}
        \Big)
    \Big\}.
\end{aligned}
\end{equation}
Next, we aim to address the sequence of the time derivative $(\partial_t u_k)_{k\in \mathbb Z}$. We invoke once more the relation $u_k = f_k-ik U_{\rm sh}'(\partial_t +1)\psi_k$ in \eqref{eq:fk-and-uk} and we decompose $\partial_t u_k$ as follows:
\begin{align*}
     \partial_t u_k 
     &= \partial_t f_k -i k U_{\rm sh}' (\partial_t +1)\partial_t \psi_k\\
     &= (\partial_t+1) f_k - f_k-i k U_{\rm sh}' \big((\partial_t+i kU_{\rm sh}) (\partial_t +1)-\partial_y^2\big)\psi_k - k^2 U_{\rm sh}' U_{\rm sh}(\partial_t +1) \psi_k
     -ik U_{\rm sh}'\partial_y^2\psi_k.
\end{align*}
Recalling that $ ((\partial_t+i kU_{\rm sh}) (\partial_t +1)-\partial_y^2)\psi_k= \Phi_k$ from  \eqref{eq:psik-not-compact}, we gather that
\begin{equation*}
     \partial_t u_k = (\partial_t+1) f_k - f_k-i k U_{\rm sh}'\Phi_k- k^2 U_{\rm sh}' U_{\rm sh}(\partial_t +1) \psi_k
     -ik U_{\rm sh}'\partial_y^2\psi_k.
\end{equation*}
A straightforward calculation leads hence to the estimate
\begin{align*}
     e^{\gamma(t) |k|^\frac{1}{3}}
     \|  \partial_t u_k  (t) \|_{L^2}
     \leq 
     e^{\gamma(t)|k|^\frac{1}{3}}
     \bigg\{
        &\| (\partial_t +1) f_k (t)   \|_{L^2}+
        \|  f_k  (t) \|_{L^2}+
        |k | \| U_{\rm sh}' \|_{L^\infty}\|  \Phi_k (t) \|_{L^2}+
        \\
        &+
        k^2 \|U_{\rm sh}' \|_{L^\infty }\|U_{\rm sh} \|_{L^\infty }\|  (\partial_t +1) \psi_k (t)  \|_{L^2}+
        |k| \|U_{\rm sh}' \|_{L^\infty } \|\partial_y^2\psi_k(t)     \|_{L^2}
     \bigg\}.
\end{align*}
We now remark that $\gamma(t) \leq \beta(t)$, for any $t\in \mathbb R$. Thus, making use of the Poincar\'e inequalities given by $\|  f_k  (t) \|_{L^2}\leq \| \partial_y f_k  (t) \|_{L^2}$ and $\|  \Phi_k (t) \|_{L^2}\leq \| \partial_y \Phi_k (t) \|_{L^2}= \| u_k \|_{L^2}$, as well as recalling \eqref{ineq:k-est-with-exp} together with
\begin{equation*}
    k^2 =\bigg( \frac{24}{\sigma}\bigg)^6 \bigg( \frac{\sigma}{24} |k|^\frac{1}{3} \bigg)^6
    \leq \bigg( \frac{24}{\sigma}\bigg)^6\Big( e^{\frac{\sigma}{24} |k|^\frac{1}{3}}\Big)^6
    = 
    64\bigg( \frac{12}{\sigma}\bigg)^6 e^{\frac{\sigma}{4} |k|^\frac{1}{3}},
\end{equation*}
we finally obtain
\begin{align*}
     e^{\gamma(t) |k|^\frac{1}{3}}
     &\|  \partial_t u_k  (t) \|_{L^2}
     \leq 
     e^{\beta(t)|k|^\frac{1}{3}}
     \Big(
        \| (\partial_t+1) f_k (t)   \|_{L^2}+
        \| \partial_y f_k  (t) \|_{L^2}
     \Big)+
        \Big(\frac{12}{\sigma}\Big)^3
        \| U_{\rm sh}' \|_{L^\infty}
        e^{\beta(t)|k|^\frac{1}{3}}\|  u_k (t) \|_{L^2}+
        \\&+
        64\bigg( \frac{12}{\sigma}\bigg)^6 \|U_{\rm sh}' \|_{L^\infty }\|U_{\rm sh} \|_{L^\infty }
        e^{\alpha(t)|k|^\frac{1}{3}}
        \|  (\partial_t +1) \partial_y\psi_k (t)  \|_{L^2}+
        \Big(\frac{12}{\sigma}\Big)^3 
        \|U_{\rm sh}' \|_{L^\infty } 
        e^{\alpha(t)|k|^\frac{1}{3}}\|\partial_y^2\psi_k(t)     \|_{L^2}.
\end{align*}
We hence plug \eqref{estimate:main-lemma:dtdypsik+dypsik}, \eqref{ineq:est-on-dt+1fk-dyfk-final} and \eqref{est:ineq-on-uk-dyuk} into this last relation, to gather 
\begin{align*}
     e^{\gamma(t) |k|^\frac{1}{3}}
     \|  \partial_t u_k  (t) \|_{L^2}
     \leq 
     \bigg\{
        \!
        \underbrace{C_{2,\sigma}(t)}_{\text{from all }f_k}
        \!+
        \Big(\frac{12}{\sigma}\Big)^3
        \underbrace{\Big(
         C_{2,\sigma}(t) + \Big(\frac{12}{\sigma}\Big)^3C_\sigma(t)
        \Big)}_{\text{from } u_k}
        \!+ 
        64\bigg( \frac{12}{\sigma}\bigg)^6 
        \hspace{-0.7cm}
        \underbrace{ \frac{C_\sigma(t)}{5}}_{\text{from  }(\partial_t+1) \partial_y\psi_k}
        \hspace{-0.7cm}
        + 
        \Big(\frac{12}{\sigma}\Big)^3
        \!\!
        \underbrace{ \frac{C_\sigma(t)}{5}}_{\text{from  }\partial_y^2\psi_k}
        \!\!
        \bigg\}
     \Big(
        1+ \\
        +
        \| U_{{\rm sh}} \|_{L^\infty}
        +
        \| U_{{\rm sh}}' \|_{L^\infty}
        + 
        \| U_{{\rm sh}}'' \|_{L^\infty}
        + 
        \| U_{{\rm sh}}''' \|_{L^\infty}
    \Big)^4
     \sup_{\tilde k\in \mathbb Z}
     \Big\{
        e^{\sigma|\tilde k|^\frac{1}{3}}
        \Big(
        \|  u_{{\rm in}, \tilde k}       \|_{L^2}
        +
        \| u_{t,{\rm in}, \tilde k} \|_{L^2}
        +
        \| \partial_y u_{{\rm in}, \tilde k} \|_{L^2}
        \Big)
    \Big\}.
\end{align*}
To simplify the summation of the terms depending on $C_\sigma$ and $ C_{2, \sigma}$, we make use of their definitions in \Cref{lemma:Gevrey3-psi} and \eqref{def:C2sigma}, so that
\begin{align*}
     C_{2,\sigma}(t) 
     &+
     \Big(\frac{12}{\sigma}\Big)^3
     \Big(
     C_{2,\sigma}(t) + \Big(\frac{12}{\sigma}\Big)^3C_\sigma(t)
    \Big)
    + 
    64\bigg( \frac{12}{\sigma}\bigg)^6 
    \frac{C_\sigma(t)}{5}   + 
        \Big(\frac{12}{\sigma}\Big)^3
     \frac{C_\sigma(t)}{5}\\
     &\leq 
     \max\bigg\{1, \frac{12}{\sigma}\bigg\}^{6}
     \bigg\{
        2C_{2,\sigma}(t)+14C_{\sigma}(t)
     \bigg\}
     \leq 
     5000
     \max\bigg\{1, \frac{12}{\sigma}\bigg\}^{15}
      (1+t)^5e^t= \frac{\tilde C_\sigma(t)}{2}.
\end{align*}
This provides indeed the following estimate on $\partial_t u_k$
\begin{equation}\label{est:final-estimate-on-dtuk}
\begin{aligned}
     e^{\gamma(t) |k|^\frac{1}{3}}
     \|  \partial_t u_k  (t) \|_{L^2}
     &\leq 
     \frac{\tilde C_\sigma(t)}{2}
     \Big(
       1
       +\| U_{{\rm sh}} \|_{L^\infty}
       +
       \| U_{{\rm sh}}' \|_{L^\infty}
       + \\
       &+ \| U_{{\rm sh}}'' \|_{L^\infty}
       + 
       \| U_{{\rm sh}}''' \|_{L^\infty}
    \Big)^4
      \sup_{\tilde k\in \mathbb Z}
     \Big\{
        e^{\sigma|\tilde k|^\frac{1}{3}}
        \Big(
        \|  u_{{\rm in}, \tilde k}       \|_{L^2}
        +
        \| u_{t,{\rm in}, \tilde k} \|_{L^2}
        +
        \| \partial_y u_{{\rm in}, \tilde k} \|_{L^2}
        \Big)
    \Big\},
\end{aligned}
\end{equation}
which together with \eqref{est:ineq-on-uk-dyuk} (and the fact that $C_{2, \sigma}(t)+(12/\sigma)^3C_\sigma(t) \leq \tilde C(t)/2$) imply finally the claimed inequality \eqref{estimate:main-lemma:uk-Gevrey-3}. This concludes the proof of \Cref{prop:transferring-Gevrey-to-uk}.
\end{proof}

\subsection{A suitable test function}\label{sec:test-function}

In order to conclude the proof of \Cref{main-thm}, we need to establish \Cref{prop:estimate-of-dtdypsik} about the uniform estimate \eqref{main-est:prop-dtdypsik} on $(\partial_t+1)\partial_y \psi_k$ and $ \partial_y^2 \psi_k$.  
In the present section we establish a suitable test function $\omega_{\tau, k}$ for equation \eqref{eq:d_ypsik-final-form}, that reveals some advantageous estimates, when analysing the $L^2$-inner product between $\omega_{\tau, k}$ and equation \eqref{eq:d_ypsik-final-form}. For a given positive time $\tau \in (0,T_\sigma)$, we consider $\omega_{\tau,k}$ as the unique solution of the following backward-in-time linear problem: 
\begin{equation}\label{eq:test-function-omegak}
    \system{
    \begin{alignedat}{4}
    &( (\partial_{t} - 1)(\partial_t  + ik U_{{\rm sh}} )  -\partial_y^2) \omega_{\tau, k} 
    = (\partial_t+1)\partial_y \psi_k 
    \qquad 
    &&(0,\tau) \times (0,1),
    \\
    & \left. \pare{\omega_{\tau, k} , \partial_t \omega_{\tau, k}}\right|_{t=\tau} =\pare{0, 0}
    &&\hspace{1.4cm}(0,1),
    \\
    & \left. \omega_{\tau, k} \right|_{y=0,1}=0
    &&(0,\tau).
    \end{alignedat}
    }
\end{equation}
With the next lemma, we determine the relations between certain meaningful norms of $\omega_{\tau, k}$ and the ones of $(\partial_t+1)\partial_y \psi_k$.
\begin{lemma}\label{lemma:test-function}
    The solution $\omega_{\tau, k}$ of \eqref{eq:test-function-omegak} satisfies  
    at any time $t\in [0,\tau]$
    \begin{equation}\label{est:lemma-omegak}
    \begin{alignedat}{2}
        \sup_{s\in (t, \tau)}
        \| (\partial_t -1)\omega_{\tau, k}(s)\|_{L^2}
        &\leq 
        2\int_t^\tau  \|(\partial_t +1)\partial_y \psi_k(s)\|_{L^2}ds,\\
        \sup_{s\in (t, \tau)}
        \| \partial_y\omega_{\tau, k}(s)\|_{L^2}
        &\leq 
        \sqrt{2}
        \int_t^\tau  \|(\partial_t +1)\partial_y \psi_k(s)\|_{L^2}ds,\\
        \sup_{s\in (t, \tau)}
        \| \omega_{\tau, k}(s)\|_{L^2}
        &\leq 
        2 e^{\tau-t}
        \int_t^\tau
        (s-t)\|(\partial_t +1)\partial_y \psi_k(s)\|_{L^2}ds.
    \end{alignedat}
    \end{equation}
\end{lemma}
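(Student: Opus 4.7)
The strategy is to time-reverse the backward problem \eqref{eq:test-function-omegak} and carry out a standard energy estimate for the resulting forward-in-time damped wave-type equation. Set $\tilde\omega(s,y) := \omega_{\tau,k}(\tau-s,y)$ for $s\in[0,\tau]$. Under $\partial_t\mapsto-\partial_s$, the operator $(\partial_t-1)(\partial_t+ikU_{\mathrm{sh}})-\partial_y^2$ becomes $(\partial_s+1)(\partial_s-ikU_{\mathrm{sh}})-\partial_y^2$, so $\tilde\omega$ satisfies
\begin{equation*}
    \bigl((\partial_s+1)(\partial_s-ikU_{\mathrm{sh}})-\partial_y^2\bigr)\tilde\omega = \tilde F,\qquad \tilde\omega(0)=\partial_s\tilde\omega(0)=0,\qquad\tilde\omega|_{y=0,1}=0,
\end{equation*}
where $\tilde F(s,y):=\bigl((\partial_t+1)\partial_y\psi_k\bigr)(\tau-s,y)$. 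Since $U_{\mathrm{sh}}$ is independent of $s$, the factors $(\partial_s+1)$ and $(\partial_s-ikU_{\mathrm{sh}})$ commute, so introducing $Z:=(\partial_s+1)\tilde\omega$ (which equals $-(\partial_t-1)\omega_{\tau,k}(\tau-s)$ in the original time variable) rewrites the equation as
\begin{equation*}
    (\partial_s-ikU_{\mathrm{sh}})Z - \partial_y^2\tilde\omega = \tilde F,\qquad Z(0)=0,\qquad Z|_{y=0,1}=0.
\end{equation*}

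Next I would test this equation against $\overline Z$ in $L^2_y$ and take the real part. The potential term $-ikU_{\mathrm{sh}}|Z|^2$ is purely imaginary and drops out; one integration by parts on the Laplacian (using $Z|_{y=0,1}=0$) together with $\partial_y Z = (\partial_s+1)\partial_y\tilde\omega$ yields the energy identity
\begin{equation*}
    \tfrac{1}{2}\tfrac{d}{ds}\bigl(\|Z(s)\|_{L^2}^2+\|\partial_y\tilde\omega(s)\|_{L^2}^2\bigr)+\|\partial_y\tilde\omega(s)\|_{L^2}^2 = \mathrm{Re}\int_0^1 \tilde F(s,y)\,\overline{Z(s,y)}\,dy.
\end{equation*}
Integrating in $s\in[0,r]$ with $r\leq \tau$, dropping the non-negative dissipation and applying Cauchy--Schwarz produces $\|Z(r)\|_{L^2}^2+\|\partial_y\tilde\omega(r)\|_{L^2}^2\leq 2\int_0^r\|\tilde F(s)\|_{L^2}\|Z(s)\|_{L^2}ds$. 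Applying a standard sup-trick to $E(r):=\sup_{s\leq r}\bigl(\|Z(s)\|_{L^2}^2+\|\partial_y\tilde\omega(s)\|_{L^2}^2\bigr)^{1/2}$ gives $E(r)\leq 2\int_0^r\|\tilde F\|_{L^2}ds$; undoing the time reversal with $t=\tau-s$ and relabelling the integration variable reproduces the first two lines of \eqref{est:lemma-omegak}.

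For the third estimate I would interpret the definition $(\partial_s+1)\tilde\omega=Z$ as a scalar first-order linear ODE in $s$ with zero initial datum, whose Duhamel representation reads $\tilde\omega(s)=\int_0^s e^{s'-s}Z(s')ds'$. Taking $L^2_y$-norms, substituting the bound on $\|Z\|_{L^2}$ obtained above, and swapping the order of integration (Fubini) turns the double integral into $\int_0^s(s-s')\|\tilde F(s')\|_{L^2}ds'$; a crude $e^{s'-s}\leq e^{\tau-t}$ then provides the prefactor $e^{\tau-t}$ in the desired inequality. Undoing the time reversal yields the third line of \eqref{est:lemma-omegak}.

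The proof is essentially an energy estimate, so no step is genuinely hard; the key structural choice is the test function $\overline{(\partial_s+1)\tilde\omega}$ on the time-reversed equation. This is exactly what exploits the commutation of $(\partial_s+1)$ with $(\partial_s-ikU_{\mathrm{sh}})$ and makes the transport term $-ikU_{\mathrm{sh}}|Z|^2$ purely imaginary, yielding constants that are independent of $k$ and $U_{\mathrm{sh}}$ on the right-hand side; this $k$-uniformity is precisely what is required for the subsequent Gevrey estimates in \Cref{prop:estimate-of-dtdypsik}.
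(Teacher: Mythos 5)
Your proposal is correct and, modulo the cosmetic time-reversal $\tilde\omega(s)=\omega_{\tau,k}(\tau-s)$, it is essentially identical to the paper's argument: your multiplier $\overline{Z}=\overline{(\partial_s+1)\tilde\omega}$ is exactly $-\overline{(\partial_t-1)\omega_{\tau,k}}$ evaluated at $\tau-s$, which is the test function used in the paper's energy identity \eqref{est:energy-proof-lemma-omegak}, and your Duhamel representation of $\tilde\omega$ as $\int_0^s e^{s'-s}Z(s')\,ds'$ does the same work as the paper's final Gronwall step. One harmless remark: your sup-trick yields the constant $2$ for the second line of \eqref{est:lemma-omegak}, not the stated $\sqrt{2}$; but a careful reading of the paper's own proof, tracking the factor $\frac{1}{2}$ on the left of \eqref{est:energy-proof-lemma-omegak}, also gives $2$ (the intermediate displayed bound for $\sup\|\partial_y\omega_{\tau,k}\|_{L^2}$ there drops a $\sqrt{2}$), so the lemma's $\sqrt{2}$ appears slightly optimistic and you have lost nothing that matters downstream.
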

\begin{proof} 
We multiply the first equation in \eqref{eq:test-function-omegak} with the complex conjugate $\overline{(\partial_t -1)\omega_{\tau, k}}$. Hence, for a given time $\tilde t\in (0,\tau)$, we integrate the achieved identity within the domain $(\tilde t, \tau)\times (0,1)$ and we extrapolate the corresponding real part:
\begin{equation}\label{est:energy-proof-lemma-omegak}
\begin{aligned}
     -\frac{1}{2}\| (\partial_t -1)\omega_{\tau, k}(\tilde t) \|_{L^2}^2
     -
     \frac{1}{2}
     \|\partial_y\omega_{\tau, k}(\tilde t) \|_{L^2}^2
     &-\int_{\tilde t}^\tau \| \partial_y \omega_{\tau, k}(s) \|_{L^2}^2ds
     \\
     &= 
     \int_{\tilde t}^\tau \int_0^1 
     {\mathfrak{Re}}
     \Big[(\partial_t +1)\partial_y \psi_k \cdot 
     \overline{(\partial_t -1)\omega_{\tau, k}}\Big](s,y)dyds.
\end{aligned}
\end{equation}
We multiply \eqref{est:energy-proof-lemma-omegak} by $-1$ and we take the Supremum of within $\tilde t\in (t,T)$, for a fixed $t\in (0,\tau)$. Thanks to  Cauchy-Schwarz, we hence establish that
\begin{align*}
    \sup_{\tilde t\in (t, \tau)}
    &\| (\partial_t -1)\omega_{\tau, k}(\tilde t) \|_{L^2}^2
    \leq 
    2
    \int_t^\tau
        \| (\partial_t +1)\partial_y \psi_k(s) \|_{L^2}
        \| (\partial_t -1) \omega_{\tau, k}(s) \|_{L^2}
    ds\\
    &\leq 
    2
    \int_t^\tau
        \| (\partial_t +1)\partial_y \psi_k(s) \|_{L^2}
    ds
    \sup_{s\in (t,\tau)}
    \| (\partial_t -1) \omega_{\tau, k}(s) \|_{L^2}.
\end{align*}
This corresponds to the first inequality of \eqref{est:lemma-omegak}.
Next, we deal with the norm $ \| \partial_y \omega_{\tau, k} \|_{L^2}$ in \eqref{est:lemma-omegak}. By invoking \eqref{est:energy-proof-lemma-omegak}, we have first 
\begin{align*}
     \sup_{s\in (t, T)}\| \partial_y \omega_{\tau, k} (s) \|_{L^2} 
     &\leq
     \bigg(
        \int_t^\tau
        \| (\partial_t +1)\partial_y \psi_k(s) \|_{L^2}
        ds
        \sup_{s\in (t, T)}
        \| (\partial_t -1) \omega_{\tau, k}(s) \|_{L^2}
    \bigg)^\frac{1}{2}.
\end{align*}
The result is thus obtained by invoking the first inequality of \eqref{est:lemma-omegak}.
To conclude the proof, we deal now with the last inequality of \eqref{est:lemma-omegak}. Since $\omega_{\tau, k}$ is null at $t = \tau$, we have
\begin{align*}
    \| \omega_{\tau, k}(t) \|_{L^2}
    &= 
    \Big\| -\int_t^\tau
    \partial_t\omega_{\tau, k}(s)ds \Big\|_{L^2}
    \leq 
    \int_t^\tau
    \| \partial_t \omega_{\tau, k}(s) \|_{L^2}ds\\
    &\leq 
    \int_t^\tau
    \| (\partial_t-1) \omega_{\tau, k}(s) \|_{L^2}ds+
    \int_t^\tau
    \| \omega_{\tau, k}(s) \|_{L^2}ds.
\end{align*}
Furthermore, the first estimate in \eqref{est:lemma-omegak} guarantees that
\begin{equation*}
    \| \omega_{\tau, k}(t) \|_{L^2}
    \leq 
    2
    \int_t^\tau
    \int_s^\tau
    \| (\partial_t +1)\partial_y \psi_k(z) \|_{L^2}
    dz
    ds +
    \int_t^\tau
    \| \omega_{\tau, k}(s) \|_{L^2}ds.
\end{equation*}
The result is then achieved by applying the Gronwall's lemma:
\begin{equation*}
    \| \omega_{\tau, k}(t) \|_{L^2} 
    \leq 
    2 e^{\tau-t}
    \int_t^\tau
    \int_s^\tau
    \| (\partial_t +1)\partial_y \psi_k(z) \|_{L^2}
    ds.
\end{equation*}
This concludes the proof of \Cref{lemma:test-function}.
\end{proof}

\subsection{Proof of \Cref{prop:estimate-of-dtdypsik}}\label{sec:the-main-estimate}

\noindent 
This section is devoted to the proof of \Cref{prop:estimate-of-dtdypsik}, which is based on the specific test function $\overline{\omega_{\tau, k}}$, introduced in \Cref{sec:test-function}. 
We begin with, by recalling system \eqref{eq:d_ypsik-final-form} for the evolution of $\partial_y \psi_k$:
\begin{equation}\label{eq:d_ypsik-final-form-sec:main-estimate}
\begin{aligned}
     \big( (\partial_{t} + 1)(\partial_t  + ik U_{{\rm sh}} )  -\partial_y^2\big)^2
    \partial_y
    \psi_k
    = 
   [ik U_{{\rm sh}}'  , \partial_{y}^2]\big( (\partial_t + 1)\psi_k\big),
\end{aligned}
\end{equation}
with initial data $\partial_t \psi_{k|t = 0} =\psi_{k|t = 0} = 0 $ and boundary conditions $\psi_{k|y = 0,1} = 0$. Next, we fix a general time $\tau \in (0,T_\sigma)$ and we multiply the equation \eqref{eq:d_ypsik-final-form-sec:main-estimate} with the conjugate $\overline{\omega_{\tau, k}}$ of the test function defined in \eqref{eq:test-function-omegak}. By integrating the result along $(0, \tau)\times (0,1)$, we obtain the following identity:
\begin{equation}\label{identity1-sec-main-estimate}
\begin{aligned}
    \int_0^\tau \int_0^1 
    \Big[
    \big( (\partial_{t} + 1)(\partial_t  &+ ik U_{{\rm sh}}(y) )  -\partial_y^2\big)^2
        \partial_y
        \psi_k
    \Big](t,y)
    \overline{\omega_{\tau, k}(t,y)}
    dy dt\\
    &= 
     \int_0^\tau \int_0^1 
   [ik U_{{\rm sh}}'(y) , \partial_{y}^2]\big( (\partial_t + 1)\psi_k(t,y)\big)
   \overline{\omega_{\tau, k}(t,y)}
    dy dt.
\end{aligned}
\end{equation}
We aim therefore to integrate by parts the integral at the left-hand side.
To this end, we first develop the operator $(\partial_{t} + 1)(\partial_t  + ik U_{{\rm sh}}(y) ) -\partial_y^2$ into $\partial_{t}^2 + (1+ik U_{{\rm sh}}(y))\partial_t + ik U_{{\rm sh}}(y) - \partial_y^2$, which localises the order of each derivative. Hence, by considering momentarily the derivative $\partial_t^2$ of second order, we gather
\begin{equation}\label{integration-by-parts-dtt1}
\begin{aligned}
     \int_0^\tau \int_0^1 
    \Big[
    \partial_t^2 
        \big( (\partial_{t} &+ 1)(\partial_t  + ik U_{{\rm sh}}(y) )  -\partial_y^2\big)
        \partial_y
        \psi_k
    \Big](t,y)
    \overline{\omega_{\tau, k}(t,y)}
    dy dt\\
    =
    &\int_0^\tau \int_0^1 
    \Big[
        \big( (\partial_{t}+ 1)(\partial_t  + ik U_{{\rm sh}}(y) )  -\partial_y^2\big)
        \partial_y
        \psi_k
    \Big](t,y)
    \overline{\partial_t^2 \omega_{\tau, k}(t,y)}
    dy dt  
    + 
    \\
    &
    +
    \int_0^1 
    \Big[
        \partial_t \big( (\partial_{t}+ 1)(\partial_t  + ik U_{{\rm sh}}(y) )  -\partial_y^2\big)
        \partial_y
        \psi_k
    \Big](0,y)
    \overline{ \omega_{\tau, k}(0,y)}
    dy + \\
    &-
    \int_0^1 
    \Big[
        \big( (\partial_{t}+ 1)(\partial_t  + ik U_{{\rm sh}}(y) )  -\partial_y^2\big)
        \partial_y
        \psi_k
    \Big](0,y)
    \overline{\partial_t \omega_{\tau, k}(0,y)}
    dy.
\end{aligned}
\end{equation}
The last two integrals of \eqref{integration-by-parts-dtt1} are set at $t= 0$ and can hence be recasted in terms of the initial data of the velocity field $u_{{\rm in},k}$, $\partial_t u_{{\rm in},k}$ and of the stream function $\Phi_{{\rm in},k}$.  Indeed, recalling that $\psi_k$ is also solution of \eqref{eq:psik-not-compact}, we remark that the second integrand at the right-hand side of \eqref{integration-by-parts-dtt1} satisfies
\begin{equation*}
    \Big[
    \partial_t( (\partial_{t} + 1)(\partial_t  + ik U_{{\rm sh}} )  -\partial_y^2) \partial_y \psi_k
    \Big](0,y)
    = ik U_{{\rm sh}}'(y) 
    \big(\partial_t^2 \psi_k(0,y)+ \partial_t \psi_k (0,y)\big) + \partial_t \partial_y \Phi_{k}(0,y),
\end{equation*}
for any $y\in (0,1)$. This can be simplified further, since $ \partial_t \partial_y \Phi_{k}(0,y)=\partial_y \Phi_{t, {\rm in}, k} = u_{t,{\rm in}, k}$,  $\partial_t \psi_k|_{t= 0} = 0$ and equation \eqref{eq:psik-not-compact} implies that $\partial_t^2\psi_k|_{t= 0} = \Phi_{\rm in,k}$. Thus
\begin{equation}\label{integration-by-parts-initial-time-1}
    \Big[
    \partial_t( (\partial_{t} + 1)(\partial_t  + ik U_{{\rm sh}} )  -\partial_y^2) \partial_y \psi_k
    \Big](0,y)
    = ik U_{{\rm sh}}'(y)  \Phi_{{\rm in},k}(y)
     +  u_{t,{\rm in}, k}(y),
\end{equation}
for any $y\in (0,1)$. An analogous approach leads moreover to the following identity for the third integrand at the right-hand side of \eqref{integration-by-parts-dtt1}:
\begin{equation}\label{integration-by-parts-initial-time-2}
    \Big[
    ( (\partial_{t} + 1)(\partial_t  + ik U_{{\rm sh}} )  -\partial_y^2) \partial_y \psi_k
    \Big](0,y)=
    \partial_y \Phi_{k}(0,y)= u_{{\rm in}, k}(y),\qquad 
    y\in (0,1).
\end{equation}
Therefore, thanks to the relations \eqref{integration-by-parts-initial-time-1} and \eqref{integration-by-parts-initial-time-2}, we can reformulate \eqref{integration-by-parts-dtt1} as follows:
\begin{equation}\label{integration-by-parts-dtt2}
\begin{aligned}
     \int_0^\tau \int_0^1 
    \Big[
    \partial_t^2 
        \big( (\partial_{t} &+ 1)(\partial_t  + ik U_{{\rm sh}}(y) )  -\partial_y^2\big)
        \partial_y
        \psi_k
    \Big](t,y)
    \overline{\omega_{\tau, k}(t,y)}
    dy dt\\
    &=
    \int_0^\tau \int_0^1 
    \Big[
        \big( (\partial_{t}+ 1)(\partial_t  + ik U_{{\rm sh}}(y) )  -\partial_y^2\big)
        \partial_y
        \psi_k
    \Big](t,y)
    \overline{\partial_t^2 \omega_{\tau, k}(t,y)}
    dy dt + 
    \\
    &
    +
    \int_0^1 
    \Big(
        ik U_{{\rm sh}}'(y)  \Phi_{{\rm in},k}(y)
     +  u_{t,{\rm in}, k}(y)
    \Big)
    \overline{ \omega_{\tau, k}(0,y)}
    dy -
    \int_0^1 
    u_{{\rm in}, k}(y)
    \overline{\partial_t \omega_{\tau, k}(0,y)}
    dy.
\end{aligned}
\end{equation}
We now come back to our original identity \eqref{identity1-sec-main-estimate} and  we shall now integrate by parts the operator $(1+ikU_{{\rm sh}})\partial_t$, with a a first order derivative. As for \eqref{integration-by-parts-dtt2}, our aim is once more to recast the resulting integrals at $t= 0$ in terms of the initial data. A direct calculation guarantees that
\begin{equation}
\begin{aligned}
    \int_0^\tau \int_0^1 
    \Big[
    (1&+ik U_{{\rm sh}} )
    \partial_t 
        \big( (\partial_{t} + 1)(\partial_t  + ik U_{{\rm sh}} )  -\partial_y^2\big)
        \partial_y
        \psi_k
    \Big](t,y)
    \overline{\omega_{\tau, k}(t,y)}
    dy dt\\
    =
    &\int_0^\tau \int_0^1 
    \Big[
        \big( (\partial_{t}+ 1)(\partial_t  + ik U_{{\rm sh}}  )  -\partial_y^2\big)
        \partial_y
        \psi_k
    \Big](t,y)
    \overline{(1-ikU_{{\rm sh}}(y))\partial_t \omega_{\tau, k}(t,y)}
    dy dt+ \\
    &+
    \int_0^1 
    \Big[
        (1+ik U_{{\rm sh}} )
        \big( (\partial_{t}+ 1)(\partial_t  + ik U_{{\rm sh}}  )  -\partial_y^2\big)
        \partial_y
        \psi_k
    \Big](0,y)
    \overline{\omega_{\tau, k}(0,y)}
    dy.
\end{aligned}
\end{equation}
Hence, recalling from \eqref{integration-by-parts-initial-time-2} that 
$ [((1+ik U_{{\rm sh}} )( (\partial_{t}+ 1)(\partial_t  + ik U_{{\rm sh}}  )  -\partial_y^2) \partial_y\psi_k](0,y)= u_{{\rm in}, k}(y)$, we obtain
\begin{equation}\label{integration-by-parts-dt}
\begin{aligned}
     \int_0^\tau \int_0^1
    \Big[
    (1 &+ik U_{{\rm sh}} )
     \partial_t 
        \big( (\partial_{t} + 1)(\partial_t  +  ik U_{{\rm sh}} )  \!-\!\partial_y^2\big)
        \partial_y
        \psi_k
    \Big](t,y)
    \overline{\omega_{\tau, k}(t, y)}
    dy dt
    \\
    =
    &\int_0^\tau \int_0^1 
    \Big[
        \big( (\partial_{t}+ 1)(\partial_t  + ik U_{{\rm sh}}(y) )  -\partial_y^2\big)
        \partial_y
        \psi_k
    \Big](t,y)
    \overline{(1-ikU_{{\rm sh}}(y))\partial_t \omega_{\tau, k}(t,y)}
    dy dt +\\
    &+\int_0^1 
    (1 +  ikU_{{\rm sh}}(y)\!)u_{{\rm in}, k}(y)
    \overline{\omega_{\tau, k}(0, y)}
    dy.
\end{aligned}
\end{equation}
To conclude the integration by parts related to the operator $((\partial_{t} + 1)(\partial_t  + ik U_{{\rm sh}}(y) )  -\partial_y^2)$ in \eqref{identity1-sec-main-estimate}, we shall now treat $-\partial_y^2$. Making use of the homogeneous conditions $\omega_{k|y = 0,1} = 0$ on the test function, we have that
\begin{align*}
    -\int_0^\tau \int_0^1 
    &\Big[
    \partial_y^2 
        \big( (\partial_{t} + 1)(\partial_t  + ik U_{{\rm sh}}(y) )  -\partial_y^2\big)
        \partial_y
        \psi_k
    \Big](t,y)
    \overline{\omega_{\tau, k}(t,y)}
    dy dt \\
    &=\int_0^\tau \int_0^1 
    \Big[
        \partial_y\big( (\partial_{t} + 1)(\partial_t  + ik U_{{\rm sh}}(y) )  -\partial_y^2\big)
        \partial_y
        \psi_k
    \Big](t,y)
    \overline{\partial_y \omega_{\tau, k}(t,y)}
    dy dt.
\end{align*}
Now, recalling from \eqref{eq:fk-and-uk} that $ ( (\partial_{t} + 1)(\partial_t  + ik U_{{\rm sh}}(y) )  -\partial_y^2 )       \partial_y        \psi_k = f_k = i k U_{\rm sh}'(y) (\partial_t +1) \psi_k + u_k $ is null in $y =0,1$, we obtain
\begin{equation}\label{no-idea-what-i-am-doing}
    \begin{aligned}
    -\int_0^\tau \int_0^1 
    &\Big[
    \partial_y^2 
        \big( (\partial_{t} + 1)(\partial_t  + ik U_{{\rm sh}}(y) )  -\partial_y^2\big)
        \partial_y
        \psi_k
    \Big](t,y)
    \overline{\omega_{\tau, k}(t,y)}
    dy dt \\
    &=
    -\int_0^\tau \int_0^1 
    \Big[
        \big( (\partial_{t} + 1)(\partial_t  + ik U_{{\rm sh}}(y) )  -\partial_y^2\big)
        \partial_y
        \psi_k
    \Big](t,y)
    \overline{\partial_y^2 \omega_{\tau, k}(t,y)}
    dy dt.
\end{aligned}
\end{equation}
As final result, we couple the identities 
\eqref{integration-by-parts-dtt2},\eqref{integration-by-parts-dt}  and \eqref{no-idea-what-i-am-doing}, so that \eqref{identity1-sec-main-estimate} can be recasted as
\begin{equation}\label{eq:dypsk-omegak}
\begin{aligned}
    \int_0^\tau \int_0^1 
    \big( (\partial_{t} + 1)(\partial_t  + ik U_{{\rm sh}}(y) )  -\partial_y^2\big)
        \partial_y
        \psi_k(t,y)
    \overline{
    \big( (\partial_{t}- 1)(\partial_t  + ik U_{{\rm sh}}(y) )  -\partial_y^2\big)\omega_{\tau, k}(t,y)
    }
    dy dt + \\ 
    +
    \int_0^1 
    \Big(
        ik U_{{\rm sh}}'(y)  \Phi_{{\rm in},k}(y)
     +  u_{t,{\rm in}, k}(y) +
      (1+ikU_{{\rm sh}}(y))u_{{\rm in}, k}(y)
    \Big)
    \overline{ \omega_{\tau, k}(0,y)}
    dy - \\ 
    -
    \int_0^1 
    u_{{\rm in}, k}(y)
    \overline{\partial_t \omega_{\tau, k}(0,y)}
    dy 
    = 
     \int_0^\tau \int_0^1 
   [ik U_{{\rm sh}}'(y) , \partial_{y}^2]\big( (\partial_t + 1)\psi_k(t,y)\big)
   \overline{\omega_{\tau, k}(t,y)}
    dy dt.
\end{aligned}
\end{equation}
Next, we make use of \eqref{eq:dypsk-omegak}, in order to derive suitable estimates on the $L^2$-norms of $(\partial_t +1) \partial_y \psi_k $ and $\partial_y^2 \psi_k$. These estimates shall not depend upon  $\omega_{\tau, k}$, hence we aim to get rid of this test function making use of \Cref{lemma:test-function} and system \eqref{eq:test-function-omegak}. By extrapolating the real part of \eqref{eq:dypsk-omegak}, the first integral becomes
\begin{align*}
    \mathfrak{Re}
    \int_0^\tau\!\!\!\int_0^1 
    \big( (\partial_{t} &+ 1)(\partial_t  + ik U_{{\rm sh}}(y) )  -\partial_y^2\big)
        \partial_y
        \psi_k(t,y)
    \overline{
    (\partial_t +1)\partial_y\psi_k(t,y)
    }dy dt\\
    &=
    \int_0^\tau
    \int_0^1 
    \mathfrak{Re}
    \big[
    \partial_t 
    (\partial_{t} + 1)\partial_y\psi_k(t,y)
    \overline{
    (\partial_t +1)\partial_y\psi_k(t,y)
    }
    \,\big]
    dy dt
    + \\ 
    &
    \qquad +
    \int_0^\tau
    \int_0^1 
    \underbrace{
    \mathfrak{Re}
    \big[
    i k U_{\rm sh}(y)
    \| (\partial_{t} + 1)\partial_y\psi_k(t,y)\|_{L^2}^2
    \big]
    }_{= 0}
    dy dt
    +
    \int_0^\tau  
    \big\| \partial_y^2 \psi_k(s) \big\|_{L^2}^2ds
    \\
    &=
    \frac{1}{2}
    \big\| (\partial_t +1)\partial_y \psi_k(\tau) \big\|_{L^2}^2
    +
    \frac{1}{2}
    \big\| \partial_y \psi_k(\tau) \big\|_{L^2}^2
    +
    \int_0^\tau  
    \big\| \partial_y^2 \psi_k(s) \big\|_{L^2}^2ds,
\end{align*}
where we have used in the last identity the fact that $ \partial_y \psi_k$ and $ \partial_t \partial_y \psi_k$ are identically null at $t = 0$. Furthermore, when dealing with the real part of the second and third integrals in \eqref{eq:dypsk-omegak}, we obtain
\begin{equation}\label{est:real-part-robaccia1}
\begin{aligned}
    \bigg|
    \mathfrak{Re}
    &\int_0^1 
    \Big(
        ik U_{{\rm sh}}'(y)  \Phi_{{\rm in},k}(y)
     +  u_{t,{\rm in}, k}(y) +
      (1+ikU_{{\rm sh}}(y))u_{{\rm in}, k}(y)
    \Big)
    \overline{ \omega_{\tau, k}(0,y)}
    dy 
    \bigg|
    \\
    &\leq
    \Big(
    |k | \| U_{{\rm sh}}' \|_{L^\infty} \| \Phi_{\rm in,k} \|_{L^2} + 
    \|  u_{t,{\rm in}, k} \|_{L^2} + 
    \big(1+ |k|  \| U_{{\rm sh}}  \|_{L^\infty} \big)\|  u_{{\rm in}, k} \|_{L^2} 
    \Big) \| \omega_{\tau, k}(0) \|_{L^2},
\end{aligned}
\end{equation}
as well as
\begin{equation}\label{est:real-part-robaccia2}
\begin{aligned}
    \bigg|
    \mathfrak{Re}
    &\int_0^1 
    u_{{\rm in}, k}(y)
    \overline{\partial_t \omega_{\tau, k}(0,y)}
    dy  
    \bigg|
    \leq
    \|  u_{ {\rm in}, k} \|_{L^2}  
    \| \partial_t \omega_{\tau, k}(0) \|_{L^2}.
\end{aligned}
\end{equation}
Finally, the real part of the right-hand side in \eqref{eq:dypsk-omegak} fulfills  
\begin{equation}\label{est:real-part-robaccia3}
\begin{aligned}
   \bigg|
   \mathfrak{Re}
   \int_0^\tau \int_0^1 
   [ik U_{{\rm sh}}'(y) &, \partial_{y}^2]\big( (\partial_t + 1)\psi_k(t,y)\big)
   \overline{\omega_{\tau, k}(t,y)}
    dy dt 
    \bigg|\\
    &\leq
    |k|\int_0^\tau \!
        \|
            [ U_{{\rm sh}}', \partial_{y}^2] 
            (\partial_t +  1)\psi_k(t)
        \|_{L^2}
        \|  
            \omega_{\tau, k}(t)
        \|_{L^2}
        dt\\
        &\leq 
         |k|\int_0^\tau\|
            [ U_{{\rm sh}}', \partial_{y}^2] 
            (\partial_t +  1)\psi_k(t)
        \|_{L^2}
        \|  
            \omega_{\tau, k}(t)
        \|_{L^2}
        dt\\
        &\leq 
         |k|\int_0^\tau\|
            U_{\rm sh}'''
            (\partial_t +  1)\psi_k(t)
            +
            2U_{\rm sh}''
            (\partial_t +  1)\partial_y \psi_k(t)
        \|_{L^2}
        \|  
            \omega_{\tau, k}(t)
        \|_{L^2}
        dt
        \\
        &\leq 
         |k|\int_0^\tau
        \Big(
            \| U_{\rm sh}'''\|_{L^\infty}
            +
            2
            \| U_{\rm sh}'' \|_{L^\infty}
        \Big)
        \| (\partial_t +  1)\partial_y \psi_k(t) \|_{L^2}
        \|  
            \omega_{\tau, k}(t)
        \|_{L^2}
        dt,
\end{aligned}
\end{equation}
where we have also made use of the Poincar\'e inequality in $y\in (0,1)$: $   \| (\partial_t \!+\!  1) \psi_k \|_{L^2}\leq  \| (\partial_t \!+\!  1)\partial_y \psi_k  \|_{L^2}$.
We can summarise hence our last estimates, by coupling \eqref{eq:dypsk-omegak} together with \eqref{est:real-part-robaccia1}, \eqref{est:real-part-robaccia2} and \eqref{est:real-part-robaccia3}. This guarantees that for any $\tau \in (0, T_\sigma)$
\begin{equation}\label{est:first-estimate-for-||(dt+1)dypsi_k||^2}
\begin{aligned}
    \frac{1}{2}
    \big\| &(\partial_t +1)\partial_y \psi_k(\tau ) \big\|_{L^2}^2
    +
    \frac{1}{2}
    \big\| \partial_y^2 \psi_k(\tau ) \big\|_{L^2}^2
    +
    \int_0^\tau  
    \big\| \partial_y^2 \psi_k(t) \big\|_{L^2}^2dt
    \\
    \leq 
    &    \Big(
    |k | \| U_{{\rm sh}}' \|_{L^\infty} \| \Phi_{\rm in,k} \|_{L^2} + 
    \|  u_{t,{\rm in}, k} \|_{L^2} + 
    \big(1+ |k|  \| U_{{\rm sh}}  \|_{L^\infty} \big)\|  u_{{\rm in}, k} \|_{L^2} 
    \Big) \| \omega_{\tau, k}(0) \|_{L^2} \\
    &+
        \| u_{{\rm in}, k}                  \|_{L^2}
        \| \partial_t \omega_{\tau, k}(0)   \|_{L^2}
     + 
        |k|
        \Big(
            \| U_{\rm sh}'''\|_{L^\infty}
            +
            2
            \| U_{\rm sh}'' \|_{L^\infty}
        \Big)
        \int_0^\tau
        \| (\partial_t +  1)\partial_y \psi_k(t) \|_{L^2}
        \|  
            \omega_{\tau, k}(t)
        \|_{L^2}
        dt.
\end{aligned}
\end{equation}
The right-hand side still depends upon the test function $\omega_{k, \tau}$. We are however in the condition to get rid of that, by applying \Cref{lemma:test-function}. This implies in particular (together with Poincar\'e) that 
\begin{align*}
    \|\omega_{k, \tau}(0) \|_{L^2}
    &\leq 
    \|\partial_y \omega_{k, \tau}(0) \|_{L^2}
    \leq \sqrt{2} \int_0^\tau   \| (\partial_t +1) \partial_y \psi_k(s) \|_{L^2}ds\\
    &\leq 
    \sqrt{2}\,\tau \!\!\sup_{s\in [0,\tau] }  
    \| (\partial_t  + 1)\partial_y \psi_k(s ) \|_{L^2},\\
    \|\partial_t \omega_{k, \tau}(0) \|_{L^2}
    &\leq 
    \|(\partial_t-1) \omega_{k, \tau}(0) \|_{L^2}+
    \|  \omega_{k, \tau}(0) \|_{L^2}\leq  
    (2+\sqrt{2})
    \int_0^\tau
    \| (\partial_t + 1)\partial_y\psi_{k}(s)   \|_{L^2}
    ds\\
    &\leq 
    \sqrt{2}(1+\sqrt{2})\tau \!\!\sup_{s\in [0,\tau] }  
    \| (\partial_t  + 1)\partial_y \psi_k(s ) \|_{L^2}
    ,\\
    \| \omega_k(t) \|_{L^2}
    &\leq 
    e^{\tau-t}
     \int_t^\tau
        (s-t)\|(\partial_t +1)\partial_y \psi_k(s)\|_{L^2}ds\\
     &\leq 
    e^{\tau}
     \int_t^\tau
    (s-t)ds
    \!\!\sup_{s\in [0,\tau] }  
    \| (\partial_t  + 1)\partial_y \psi_k(s ) \|_{L^2} 
    = 
    \frac{e^{\tau}
    (\tau-t)^2}{2}
    \!\!\sup_{s\in [0,\tau] }  
    \| (\partial_t  + 1)\partial_y \psi_k(s ) \|_{L^2} .
\end{align*}
Thus, by taking now the supremum in $\tau \in (0,t)$ for a general $t\in (0,T_\sigma)$ (and re-denoting the variables of integration), we can recast \eqref{est:first-estimate-for-||(dt+1)dypsi_k||^2} uniquely in terms of the stream function $\psi_k$ as follows:
\begin{equation*}
\begin{aligned}
    &\frac{1}{2}
    \sup_{s \in (0,t)}
    \bigg\{
        \big\| (\partial_t +1)\partial_y \psi_k(s) \big\|_{L^2}^2
        +
        \big\| \partial_y^2 \psi_k(s) \big\|_{L^2}^2
    \bigg\}
    +
    \int_0^t  
    \big\| \partial_y^2 \psi_k(s) \big\|_{L^2}^2ds
    \leq  
     \\ 
    &
    \sup_{s\in (0,t) } \!\!
    \| (\partial_t \!+\!1)\partial_y \psi_k(s ) \|_{L^2}
    \bigg\{
        \sqrt{2}
        \Big(
        |k | \| U_{{\rm sh}}' \|_{L^\infty} \| \Phi_{\rm in,k} \|_{L^2} + 
        \|  u_{t,{\rm in}, k} \|_{L^2} + 
        \big(1+ |k|  \| U_{{\rm sh}}  \|_{L^\infty} \big)\|  u_{{\rm in}, k} \|_{L^2} 
        \Big)t
        +\\
    & \hspace{2cm}  
        +2(1+\sqrt{2})\| u_{\rm in} \|_{L^2}
        \tau
        + 
        |k|
        \Big(
            \| U_{\rm sh}'''\|_{L^\infty}
            +
            2
            \| U_{\rm sh}'' \|_{L^\infty}
        \Big)
        e^{t}
        \int_0^t
        \| (\partial_t +  1)\partial_y \psi_k(s) \|_{L^2}
        (t-s)^2
        ds
    \bigg\}.
\end{aligned}
\end{equation*}
If $\sup_{s\in (0,t) } \| (\partial_t \!+\!1)\partial_y \psi_k(s ) \|_{L^2} = 0$, then the main estimate \eqref{main-est:prop-dtdypsik} is automatically satisfied. On the other hand, in case this term is not identically null, we have that
\begin{align*}
    &
    \sup_{s \in (0,t)}
    \bigg\{
        \big\| (\partial_t +1)\partial_y \psi_k(s) \big\|_{L^2}
        +
        \big\| \partial_y^2 \psi_k(s) \big\|_{L^2}
    \bigg\}
    \leq 
    \sqrt{2}
    \bigg(
    \sup_{s \in (0,t)}
    \bigg\{
        \big\| (\partial_t +1)\partial_y \psi_k(s ) \big\|_{L^2}^2
        +
        \big\| \partial_y^2 \psi_k(s) \big\|_{L^2}^2 
    \bigg\}
    \bigg)^\frac{1}{2}\\
    &\leq 
    \frac{2\sqrt{2}}{\sup_{s\in (0,t) } \| (\partial_t \!+\!1)\partial_y \psi_k(s ) \|_{L^2}}
    \frac{1}{2}
    \sup_{s \in (0,t)}
    \bigg\{
        \big\| (\partial_t +1)\partial_y \psi_k(s ) \big\|_{L^2}^2
        +
        \big\| \partial_y^2 \psi_k(s) \big\|_{L^2}^2 
    \bigg\}\\
    &\leq 
    2\sqrt{2}
    \bigg\{
        \sqrt{2}
        \Big(
        |k | \| U_{{\rm sh}}' \|_{L^\infty} \| \Phi_{\rm in,k} \|_{L^2} + 
        \|  u_{t,{\rm in}, k} \|_{L^2} + 
        \big(1+ |k|  \| U_{{\rm sh}}  \|_{L^\infty} \big)\|  u_{{\rm in}, k} \|_{L^2} 
        \Big)t
        +\\
    & \hspace{2cm}  
        +2(1+\sqrt{2})\| u_{{\rm in}, k} \|_{L^2}t
        + 
        |k|
        \Big(
            \| U_{\rm sh}'''\|_{L^\infty}
            +
            2
            \| U_{\rm sh}'' \|_{L^\infty}
        \Big)
        e^{t}
        \int_0^t
        \| (\partial_t +  1)\partial_y \psi_k(s) \|_{L^2}
        (t-s)^2
        ds
    \bigg\}.
\end{align*}
We hence reorganise the last inequality into the following compact form:
\begin{equation*}
    \sup_{s \in [0,t]}
    \bigg\{
        \big\| (\partial_t +1)\partial_y \psi_k(s) \big\|_{L^2}
        +
        \big\| \partial_y^2 \psi_k(s) \big\|_{L^2}
    \bigg\}
    \leq g_k(t) + \frac{\lambda_k(t)^3}{2}
    \int_0^t
    (t-s)^2
    \sup_{\tau \in [0,s]}\| (\partial_t +  1)\partial_y \psi_k(\tau) \|_{L^2}
    ds
\end{equation*}
where the functions $g_k(t)$ and $\lambda_k(t)$ are defined by means of
\begin{align*}
    g_k(t) 
    &:= 
    4t 
    \Big\{
        |k|
        \Big(
            \|U_{\rm sh}' \|_{L^\infty}\| \Phi_{\rm in, k}  \|_{L^2} + 
            \|U_{\rm sh}  \|_{L^\infty}\| u_{\rm in, k}     \|_{L^2}
        \Big)
        +
        \| u_{t,\rm in, k}     \|_{L^2}
        +
        (3+\sqrt{2})
        \| u_{\rm in, k}     \|_{L^2}
    \Big\}
    \\
    \lambda_k(t)
    &:= 2^{\frac{5}{6}}|k|^\frac{1}{3}
    \Big(
            \| U_{\rm sh}'''\|_{L^\infty}
            +
            2
            \| U_{\rm sh}'' \|_{L^\infty}
    \Big)^\frac{1}{3}
    e^{\frac{t}{3}}.
\end{align*}
This last inequality corresponds to our claimed estimate \eqref{main-est:prop-dtdypsik}. This concludes therefore the proof of \Cref{prop:estimate-of-dtdypsik}.

\section{Conclusion and remarks on the non-linear system}\label{sec:remarks-on-the-nonlinear-system}
In this section, we investigate why Theorem \ref{main-thm} cannot be proven for the nonlinear counterparts of \eqref{LHP} without further ado. Clearly, this is a consequence of the nonlinear structure but furthermore, the hyperbolic regime interferes with the known cancellation properties of  the classical Prandtl/Navier-Stokes equations in anaggravating way.

\smallskip
\noindent
To begin with, we observe that several candidates exist for which Theorem \ref{main-thm} might hold true. Some of the represent simplifications of other formulations but, nevertheless, they contain drawbacks which cannot be dealt with easily. 
The simplest form of the nonlinear hyperbolic Prandtl equation consists of 
\begin{equation} \label{simpleHP}
    \system{
    \begin{alignedat}{4}
    & 
        \tau \partial_{tt} u  + \partial_t u +   u  \partial_x u 
    + v \partial_y u  
    - \partial_{y}^2 u =
     \big( 
        \tau \partial_t+1
    \big)
    \big(
        \partial_t u^{E}+ u^E\partial_x u^E
    \big),
    \quad  
    &&(0,T) \times \mathbb{X}\times (0,+\infty),\\
    & \partial_x u + \partial_y v =0
    &&(0,T) \times \mathbb{X}\times (0,+\infty),
    \\
    & \left. \pare{u,v}\right|_{y=0}=0\quad 
    \lim_{y\to+\infty} u = u^E
     &&(0,T) \times \mathbb{X},\\
    &\left. \pare{u, u_t}\right|_{t=0} =\pare{u_{\rm in}, u_{t,\rm in}}
    &&\hspace{1.4cm} \mathbb{X}\times (0,+\infty),
    \end{alignedat}
    },
\end{equation}
This system looks promising when trying to implement the strategy of \cite{MR3925144}. However, besides the fact that the second time derivative produces difficulties (see below), a quick look at the linearization
\begin{align*}
    & \partial_{t}^2 u +
    \partial_tu  + U_{{\rm sh}}(y) \partial_x u + v \ U'_{{\rm sh}}(y) - \partial_{yy} u =0,
    \qquad && \text{on } (0,T) \times \mathbb{T}\times (0,1)
\end{align*}
suffices to realize the Eigenvalues contain a positive real part in general. More precisely, solutions corresponding to a frequency $k$ in $x$ will behave like $e^{\sqrt{|k|}t}$ which restricts well-posedness theory to the Gevrey 2 case. For a hyperbolic equation, this is expected and actually proven for  \eqref{simpleHP} in \cite{LiXu2021} (see also \cite{PZ2022}).

\smallskip
\noindent
In conclusion, it is essential to maintain the convective structure of the hyperbolic Prandtl equations (as shown in Section \ref{sec:two}). By Cattaneo's law, it reads 
\begin{equation}
    \system{
    \begin{alignedat}{4}
    &  
    \big( 
        \tau \partial_t+1
    \big)
    \big(
    \partial_t u  +  u  \partial_x u 
    + v \partial_y u  
    \big)
    - \partial_{y}^2 u =
     \big( 
        \tau \partial_t+1
    \big)
    \big(
        \partial_t u^{E}+ u^E\partial_x u^E
    \big),
    \quad  
    &&(0,T) \times \mathbb{X}\times (0,+\infty),\\
    & \partial_x u + \partial_y v =0
    &&(0,T) \times \mathbb{X}\times (0,+\infty),
    \\
    & \left. \pare{u,v}\right|_{y=0}=0\quad 
    \lim_{y\to+\infty} u = u^E
     &&(0,T) \times \mathbb{X},\\
    &\left. \pare{u, u_t}\right|_{t=0} =\pare{u_{\rm in}, u_{t,\rm in}}
    &&\hspace{1.4cm} \mathbb{X}\times (0,+\infty),
    \end{alignedat}
    }.
\end{equation}
Unfortunately, the time derivative on the convective term brings several other difficulties with it. At first, note that one of the terms, $\partial_t u \partial_x u$, competes with the damping mechanism for large values. Even for the hyperbolic Navier-Stokes equations, this circumstance poses a fundamental issue (see e.g.\ \cite{RackeSaal2012}).

\smallskip
\noindent
Secondly and much more inherent to the strategy followed in \cite{MR3925144} and Section \ref{sec:two}, the additional (second) time-derivatives produce corresponding terms terms on the right-hand side of the equation. Following Section \ref{sec:recasting-uk-into-psik}, we realize that two commutators  need to be evaluated in \eqref{firstCommutator} and \eqref{eq:d_ypsik-final-form}. In the classical Prandtl regime, the solenoidality of $(u,v)$  and cancellation of curl-related terms enters the analysis, e.g.
\begin{align*}
    [\partial_y, \partial_t +u \partial_x + v\partial_y -\partial_{yy}] u=0.
\end{align*}
None of these instances persist in the hyperbolic version. Instead, new terms arise like $\partial_{xt}v \sim \partial_{xxt} u$. Confronted with a $4$-order time derivative on the left-hand side, this does not present an improvement over the standard Gevrey 2 regularity result. A new,  different (perhaps related)  cancellation mechanism seems to be necessary but it is not clear how the system can be closed. 

\smallskip
\noindent
At this point, we remark that the independence of the shear flow $U_{\rm sh}$ on $(t,x)$ is exploited heavily in Section 4. A third variant of \eqref{simpleHP}, substituting the first equation by
\begin{equation*}
    (\tau  \partial_{tt} + \tau u\partial_x \partial_t + \eta v \partial_y \partial_t + \partial_t  +u \partial_x + v \partial_y - \partial_{yy} ) u =0,
\end{equation*}
bears similar problems, although the competing damping term $\partial_x u \partial_t u $ is not present here. However, the potential improvement of the above equation might lie in the transport structure related to the time-derivatives of $u$.

\smallskip
\noindent
In sum, we conjecture that Theorem \ref{main-thm}  hints  stability results around shear flows for the nonlinear hyperbolic equations in Gevrey 3 class. On the other hand, a general well-posedness theory for arbitrary initial data in $\mathcal{G}^3$ does not seem to be achievable without major novelties or improvements on the strategy followed in \cite{MR3925144} and this work.

\section*{Acknowledgment}

The author would like to thank Prof.~M.~Paicu for the several helpful advises on various technical issues examined in this Paper. The first author was partially supported by the Bavarian Funding Programme for the Initiation of International Projects (Förderkennzeichen: BayIntAn\textunderscore UWUE\textunderscore 2022\textunderscore 139). The third author was partially supported by GNAMPA and INDAM.

\subsection*{Data Availability Statement} 
Data sharing is not applicable to this article, since no datasets were generated or analysed during the current study.

\subsection*{Conflict of interest} 
The authors declare that they have no conflict of interest.

\printbibliography

\end{document}